\numberwithin{equation}{section}
\newtheorem{proposition}{Proposition}[section]
\newtheorem{lemma}[proposition]{Lemma}
\newtheorem{theorem}[proposition]{Theorem}
\theoremstyle{definition}
\newtheorem{remark}[proposition]{Remark}
\newtheorem{definition}[proposition]{Definition}
\DeclareMathOperator{\Bl}{Bl}
\DeclareMathOperator{\End}{End}
\DeclareMathOperator{\Id}{Id}
\DeclareMathOperator{\Aut}{Aut}
\DeclareMathOperator{\Ric}{Ric}
\newcommand{\ddb}{i\partial \bar\partial}
\newcommand{\dbar}{\bar\partial}
\newcommand{\scH}{\mathcal{H}}
\newcommand{\scL}{\mathcal{L}}
\newcommand{\scV}{\mathcal{V}}
\newcommand{\scX}{\mathcal{X}}
\title[Extremal metrics on destabilising test configurations]{Extremal metrics on the total space of destabilising test configurations}
\author[Lars Martin Sektnan and Cristiano Spotti]{Lars Martin Sektnan and Cristiano Spotti}
\address{Lars Martin Sektnan, Department of Mathematical Sciences, University of Gothenburg, 412 96 Gothenburg, Sweden and Institut for Matematik, Aarhus University, 8000, Aarhus C, Denmark}
\email{sektnan@chalmers.se}
\address{Cristiano Spotti, Institut for Matematik, Aarhus University, 8000, Aarhus C, Denmark}
\email{c.spotti@math.au.dk}
\begin{document}

\begin{abstract} 
We construct extremal metrics on the total space of certain destabilising test configurations for strictly semistable K\"ahler manifolds. This produces infinitely many new examples of manifolds admitting extremal K\"ahler metrics. It also shows for such metrics a new phenomenon of jumping of the complex structure along fibres.
\end{abstract}

\maketitle

\section{Introduction}

A central theme in complex geometry is the search for canonical metrics, such as K\"ahler--Einstein metrics, constant scalar curvature K\"ahler (cscK) metrics, or, more generally, extremal K\"ahler metrics. Such metrics are solutions to highly complicated non-linear PDEs, and their existence question is very involved. There are examples of K\"ahler manifolds that admit extremal metrics in all, some, or none of its K\"ahler classes. The main question one would like to answer is if there is an extremal metric in a given K\"ahler class on a K\"ahler manifold.

A central conjecture in the field is the Yau--Tian--Donaldson (YTD) conjecture, which links this purely differential geometric PDE question to algebraic geometry (\cite{yau93, tian97, donaldson02}). More precisely, this says that the existence of a cscK metric should be equivalent to a notion of stability called K-(poly)stability. It was extended to the extremal setting by Sz\'ekelyhidi (\cite{Szethesis}), where the relevant algebraic notion is relative K-stability. Despite a huge amount of work over many years, the conjecture remains open apart from a few cases, such as the K\"ahler-Einstein case (\cite{chendonaldsonsun15i,chendonaldsonsun15ii,chendonaldsonsun15iii}) and the toric case (\cite{donaldson09,chencheng18}). 

Producing extremal metrics is in general very hard. Even if the YTD conjecture is proved, this will continue to be the case, as the stability criterion is also difficult to check with the present technology, at least outside the case of Fano varieties. One main avenue for actually producing extremal metrics has been to directly solve the equation, working with an ansatz that simplifies the PDE (see e.g. \cite{calabi82,hwangsinger02,acgtf04}). This requires one to work under certain symmetry assumptions. Another method is to construct new extremal metrics from old ones, via perturbative techniques. There have been many different such constructions, for example on blowups via gluing (\cite{arezzopacard06, arezzopacard09, arezzopacardsinger11, szekelyhidi12}), or on the total spaces of fibrations, working in so-called adiabatic K\"ahler classes, that make the base direction large compared to the fibres (\cite{hong98,fine04,bronnle15,dervansektnan19a,dervansektnan20}). The current work falls into the latter category. 

Previous fibration constructions have all been focused on constructing cscK or extremal metrics when the fibration has cscK fibres. The main reason for this assumption is that an asymptotic expansion of the scalar curvature with respect to metrics of the form $\omega + k \omega_B$, where $\omega$ is a relatively K\"ahler metric on the total space, and $\omega_B$ is pulled back from the base, yields the scalar curvature of the induced metric on the fibres as the leading order term. Thus, if working with a fixed relatively K\"ahler metric initially, one needs a relatively cscK metric in order to have constant scalar curvature to leading order. 

In \cite{fine04}, it was assumed that the total space and the fibres have no automorphisms. In fact, when there is a discrepancy between the automorphism group of the fibres and the automorphisms of the total space of the fibration, the existence question becomes much more subtle. For example, on projective bundles, any hermitian metric on the bundle gives a fibrewise Fubini-Study metric on the projectivised bundle. Ideas going back to Hong in \cite{hong98} say that one should use a Hermite--Einstein metric as a good choice of relatively cscK metric, in this setting. This issue was considered by many authors before Dervan and the first named author in \cite{dervansektnan20} introduced an equation for the fibrewise cscK metric, called the optimal symplectic connection (OSC) equation, that picks out a canonical choice of fibrewise cscK metric, when the fibres have automorphism.

In the present work, we relax the condition that every fibre admits a cscK metric, and consider a special type of fibration where the general fibre does not admit such metrics. In fact, in our construction, all but one fibre does not admit a cscK metric. We will construct extremal metrics on the total space of this fibration, and this shows that relative stability of the fibres is not a necessary condition (although relative \emph{semistability} is, see \cite{dervansektnan19b}).

Suppose that $(X,L)$ is an \emph{analytically K-semistable manifold}, meaning that there exists a degeneration of $(X,L)$ to some $(X_0, L_0)$ which admits a cscK metric. That is, we have a smooth test configuration
$$ 
\pi : \scX \to \mathbb{P}^1
$$ 
and a $\mathbb{Q}$-line bundle $\mathcal{L}$ on $\scX$ whose restriction to non-zero fibres equals $L$ and whose central fibre is $(X_0, L_0)$, where $X_0$ admits a cscK metric in $c_1 (L_0)$. Note that analytic K-semistability implies K-semistability (\cite{donaldson05}). 

We will let $ \scL_k = \scL + \pi^* \left( \mathcal{O} (k) \right)$. Under certain conditions, we produce an extremal metric on $\scX$ in $c_1 (\scL_k)$ for $k \gg 0$.
\begin{theorem}
\label{thm:main}
Suppose $(X,L)$ is a smooth analytically K-semistable manifold such that the automorphism group of $(X,L)$ has a $\mathbb{C}^*$ discrepancy against the automorphism group of the cscK central fibre $(\scX_0, \mathcal{L}_0)$. Then there exists an extremal metric on $\scX$ in $c_1 \left( \scL + \pi^* \left( \mathcal{O} (k) \right) \right)$ for all $k \gg 0$.
\end{theorem}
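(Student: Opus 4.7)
The strategy I would pursue is an adiabatic perturbation argument on the K\"ahler classes $c_1(\scL_k) = c_1(\scL + k\pi^*\mathcal{O}(1))$ as $k \to \infty$. Starting from the cscK metric on the central fibre $(\scX_0, \scL_0)$, one extends it to a relatively K\"ahler form $\omega_0 \in c_1(\scL)$ on $\scX$ and considers the adiabatic family $\omega_k = \omega_0 + k\pi^*\omega_{FS}$. The key structural observation motivating the whole approach is that the ``extra'' $\mathbb{C}^*$ in the discrepancy hypothesis is precisely the $\mathbb{C}^*$-action on $\scX$ underlying the test configuration: it integrates to a global holomorphic vector field $\xi$ on $\scX$, and my goal is to identify $\xi$ as the extremal vector field of $c_1(\scL_k)$ in the adiabatic limit. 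Relaxing from cscK to extremal, with extremal vector field $\xi$, is what allows the construction to proceed despite the generic fibres not admitting cscK metrics.

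The first main step is to exhibit $\omega_k$ as an \emph{approximate} extremal metric with extremal vector field $\xi$, i.e.\ to control $S(\omega_k) - h_{\xi,k}$ up to a constant, where $h_{\xi,k}$ denotes the $\omega_k$-Hamiltonian of $\xi$. The standard fibration asymptotic expansion in powers of $k^{-1}$ produces the vertical scalar curvature at leading order; this is constant on $\scX_0$ but not on $\scX_t \cong X$ for $t \neq 0$. The obstruction to constancy is, however, a Futaki-type defect coming from the destabilizing $\mathbb{C}^*$, and the content of the step is to show that to leading order this defect matches the restriction of $h_{\xi,k}$ to $X_t$, so that the twisted vertical equation $S_V(\omega_t) = h_\xi|_{X_t} + \mathrm{const}$ admits a solution exactly because $\xi$ generates the degeneration. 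One then iteratively constructs fiberwise and base corrections $\phi_{k,j}$ of order $k^{-j}$, following the scheme of Fine \cite{fine04}, Br\"onnle \cite{bronnle15} and Dervan--Sektnan \cite{dervansektnan20}, so that $\omega_k + \ddb \sum_{j \leq N} k^{-j}\phi_{k,j}$ is extremal up to error $O(k^{-N-1})$ in a suitable H\"older norm, with $\xi$ (plus a small $k$-dependent correction) as extremal vector field.

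With a high-order approximate solution in hand, a quantitative inverse function theorem for the extremal operator $\omega \mapsto \bar\partial\nabla^{1,0}(S(\omega) - h_\xi(\omega))$ produces the exact extremal metric. This requires a $k$-uniform polynomial lower bound of the form $c k^{-\alpha}$ on the smallest nonzero eigenvalue of the relevant Lichnerowicz-type operator modulo the kernel spanned by Hamiltonians of holomorphic vector fields on $\scX$. The main obstacle I expect lies not in the implicit function step, which is routine once the linear estimates and high-order approximate solution are established, but in the obstruction analysis itself: because $\Aut(X, L) \subsetneq \Aut(\scX_0, \scL_0)$ by a $\mathbb{C}^*$, the kernels of the vertical Lichnerowicz operators on $\scX_t$ jump as $t \to 0$, so the restriction of $\xi$ is in the kernel on $\scX_0$ but only approximately so on nearby fibres. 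Tracking these approximate kernel elements, and showing that their pairing with the vertical obstruction is precisely governed by $h_\xi$, is the technical crux, and is where the hypothesis that the discrepancy is exactly a $\mathbb{C}^*$ (so that the extremal vector field in the limit is unambiguous and one-dimensional) will enter decisively.
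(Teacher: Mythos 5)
There is a genuine gap, and it sits exactly at the step you call the first main one. You work with a \emph{fixed} relatively K\"ahler form $\omega_0$ and the adiabatic family $\omega_0 + k\pi^*\omega_{FS}$, and you claim that the leading-order vertical defect $S_V(\omega_0|_{X_t}) - \hat S$ on the non-central fibres is matched by the restriction of the Hamiltonian of the destabilising vector field $\xi$, so that a ``twisted vertical equation'' $S_V = h_\xi|_{X_t} + \mathrm{const}$ is solvable ``because $\xi$ generates the degeneration.'' This does not work. On a fibre $\scX_t$ with $t \neq 0$ the field $\xi$ is \emph{not} tangent to the fibre (the $\mathbb{C}^*$-action moves $t$), so $h_\xi|_{X_t}$ is not a holomorphy potential on $X_t$; in the simplest case of the discrepancy hypothesis $X_t \cong X$ has discrete reduced automorphisms, so its only holomorphy potentials are constants, and since $X$ is strictly semistable it admits no cscK metric. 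Hence your fibrewise equation is a prescribed-scalar-curvature problem with no structural reason to be solvable, and there is no pointwise identity forcing the defect of an arbitrary fixed relative metric to equal $h_\xi|_{X_t}$: the relation between the destabilising $\mathbb{C}^*$ and the scalar curvature defect is only an integral (Futaki-type) statement, not an identity of functions. This is precisely the obstruction the paper isolates: a fixed relatively K\"ahler metric can never give a good approximate solution, because the leading term of the expansion is not a holomorphy potential.

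The paper's resolution, which your proposal is missing, is to use a \emph{$k$-dependent sequence} of relatively K\"ahler metrics. One takes the Kuranishi-family fibre metrics $\alpha_t = f_t^*\omega$ (which are $O(t)$-close to the cscK metric of the central fibre) on the region $|t| \lesssim \varepsilon$, freezes them at $\alpha_\varepsilon$ for $|t| \gtrsim \varepsilon$ via a cutoff, and couples $\varepsilon$ to the polarisation by $\lambda\varepsilon^{-\delta} \sim k$ (Section \ref{sec:metricsextension}). The estimate $\|\phi_t - \phi_\varepsilon\|_{C^{4,\alpha}} \leq C\varepsilon$ (Proposition \ref{prop:potentialbound}) then gives fibrewise scalar curvature $\hat S + O(\varepsilon)$ on \emph{every} fibre, so the leading vertical term is the constant $\hat S$ rather than a function one must match with $h_\xi$; the gluing also forces one to control the $O(\varepsilon^{-1})$ and $O(\varepsilon^{-2})$ horizontal terms produced by the cutoff, which is why the K\"ahlerness of $\Omega_\varepsilon$ and the expansion of Proposition \ref{prop:firstapproxerror} need a separate argument. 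The role you assign to $\xi$ enters differently and later: the cokernel of the leading-order linearisation is the space of holomorphy potentials of the \emph{central} fibre, and Lemma \ref{lem:holpots} (and Lemma \ref{lem:holpots2} in the general torus case) identifies these with genuine holomorphy potentials on $\scX$, of the form $\chi(t/\varepsilon)h_0 + \lambda\varepsilon^{-\delta}h_{FS}$, so vertical obstructions can be absorbed into the extremal field during the iterative improvement (Proposition \ref{prop:approxsoln}); the inverse bound $\|Q_{\varepsilon,\kappa}\| \leq C\varepsilon^{-2\delta}$ and the quantitative inverse function theorem then finish as you anticipated. Your final two steps are thus broadly aligned with the paper, but without the $\varepsilon$-dependent family of relative metrics the approximate solution you need as input never exists.
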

The $\mathbb{C}^*$ discrepancy condition is a technical condition, see Definition \ref{def:automassumption}. In the simplest setting where the automorphism group of $(X,L)$ is discrete, this simply means that the central fibre has a $\mathbb{C}^*$ as its maximal torus, which is the minimum it can be.

If $(X,L)$ itself is cscK in the above, so that the test configuration is a product test configuration, the result follows in a straightforward way e.g. from \cite{dervansektnan20}. Thus the interesting case is when $(X,L)$ is strictly semistable, and so does not admit a cscK metric. The key issue to overcome in this situation is that working with a fixed relatively K\"ahler metric on the total space will never yield a good approximate solution to the extremal equation, as the leading order term in the expansion of the scalar curvature will not be a holomorphy potential. Thus one has to work with a \emph{sequence} of relatively K\"ahler metrics, and constructing such a sequence is a key step in the argument.

The fact that one should be able to choose such a sequence for a general K-semistable manifold follows from a conjecture of Donaldson (\cite{donaldson05}), which states that for a polarised manifold $(Y, H)$,
$$
\inf_{\omega \in c_1 (H) } \| S(\omega) - \hat S \| = \sup_{\mathcal{Y} \textnormal{ t.c.}} \frac{- DF (\mathcal{Y}) }{\| \mathcal{Y} \|},
$$
where the supremum on the right hand side goes over all test configurations for $(Y,H)$ and $\| \cdot \|$ is a norm on test configurations. In particular, if $(Y, H)$ is strictly semistable, this supremum should be zero, and so we should be able to find metrics that are \emph{arbitrarily close to being cscK}, even if no actual cscK metric exists. Under our assumption that $(X,L)$ is \emph{analytically} K-semistable, it is automatic that we can produce such a sequence near the central fibre. Thus the main step for us in this part of the argument is to extend this relatively K\"ahler metric defined near the central fibre, to the full test configuration. 

Our assumption on the automorphism group can be thought of as analogous to the assumptions of Fine mentioned above. While the total space of the test configuration always has to have non-trivial automorphism group, our assumption says that there is no discrepancy between the automorphism group of the central fibre and that of the total space of the test configuration. In particular, we see no issue related to the \emph{choice} of cscK metric on the central fibre, and so see nothing like the OSC equation coming in. It should be possible to relax these conditions, introducing an OSC like equation in the semistable setting, and, indeed, such a theory is currently under development by Annamaria Ortu (\cite{ortu21}). See Remark \ref{rem:outlook} for further details.

The result also has some consequences and applications. A natural question to ask is about what happens from a metric viewpoint to the extremal metrics when $k\rightarrow \infty$. Our construction shows that a phenomenon of \emph{jumping of the complex structures} within the fibres is occurring. More precisely:

\begin{proposition}
	For \emph{any}  $p\in \mathcal{X}$ the pointed Gromov-Hausdorff limits based at $p$ is isometric to $X_0 \times \mathbb{R}^2$, where $X_0$ is the cscK central fibre. 
\end{proposition}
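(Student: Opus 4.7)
The plan is to exploit the explicit form of the extremal metrics produced in Theorem~\ref{thm:main}, reducing the assertion to a product splitting in the adiabatic limit together with a jump of the fibre complex structure.

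Recall that $\eta_k \in c_1(\scL_k)$ is obtained by perturbing an approximate extremal metric of the form $\omega_k + k \pi^* \omega_{FS}$, where $\omega_k$ is the sequence of relatively K\"ahler metrics on $\scX$ constructed in the proof. The perturbation tends to zero in a H\"older norm dominating the uniform distance, so the pointed Gromov--Hausdorff limit of $(\scX, \eta_k, p)$ as $k\to\infty$ agrees with the pointed limit of the approximate sequence, and it suffices to analyse the latter.

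In this adiabatic regime, the base contribution $k \pi^* \omega_{FS}$ stretches distances on $\P^1$ by $\sqrt{k}$, so a ball $B_R(p)$ projects under $\pi$ to a disk of radius $O(R/\sqrt{k})$ about $\pi(p)$ in $(\P^1, \omega_{FS})$. This disk shrinks to a point in $\P^1$ while its rescaled pulled-back metric flattens to Euclidean $\mathbb{R}^2$. Simultaneously the fibre through $p$ retains bounded diameter with respect to $\omega_k$, and the off-diagonal components of $\omega_k + k\pi^*\omega_{FS}$ coming from variation of the relative K\"ahler potential along the base are $O(1)$, hence negligible compared to the $O(k)$ base direction. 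Thus horizontal and vertical directions asymptotically decouple and the pointed limit splits isometrically as a product of $\mathbb{R}^2$ with the pointed Gromov--Hausdorff limit of the fibre through $p$.

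The heart of the argument is identifying this fibre factor. The sequence $\omega_k$ is constructed precisely so that $\omega_k|_{X_t}$ becomes asymptotically cscK as $k\to\infty$ (otherwise the perturbative scheme solving the extremal equation would not close). Since the general fibre $X$ is strictly K-semistable and its only analytic cscK degeneration is $(X_0, \omega_0)$, the fibrewise metrics $(X_t, \omega_k|_{X_t})$ cannot smoothly converge in the complex structure of $X$; by a Cheeger--Colding/no-collapse compactness argument together with the analytic K-semistability hypothesis, the only possible Gromov--Hausdorff limit is $(X_0, \omega_0)$. Concretely, one can realise this jump by observing that near the central fibre the potentials for $\omega_k$ are built from $\omega_0$ and propagated outward via the $\mathbb{C}^*$-action of the test configuration; as $k$ grows, the effective rescaling translates every fibre $X_t$ toward $X_0$ in moduli. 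Combined with the splitting above, this yields the pointed limit $X_0\times \mathbb{R}^2$, independently of $p$.

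The main obstacle is making this fibrewise Gromov--Hausdorff convergence rigorous and uniform in $t$, particularly for $p$ off the central fibre where one is genuinely asserting a jump of complex structure in the limit. One must rule out competing limits such as bubbling, other polystable degenerations, or noncollapsed alternatives; this requires both the analytic K-semistability hypothesis and the uniform fibrewise estimates on $\omega_k$ established in the proof of Theorem~\ref{thm:main}.
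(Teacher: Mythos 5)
Your overall skeleton matches the paper's: reduce from the genuine extremal metric to the model $\Omega_{\varepsilon}=\omega_{\varepsilon}+\lambda\varepsilon^{-\delta}\pi^*\omega_{FS}$ using the fact that the perturbation decays (Remark \ref{rem:closesolns}), and then argue that in the adiabatic regime the base direction opens up to flat $\mathbb{R}^2$ while the fibre direction contributes a fixed factor. That part is fine.

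The gap is in what you call the heart of the argument, the identification of the fibre factor. You argue: the fibre metrics are asymptotically cscK, the general fibre is strictly K-semistable with unique cscK degeneration $(X_0,\omega_0)$, hence by a Cheeger--Colding/no-collapse compactness argument the only possible Gromov--Hausdorff limit is $(X_0,\omega_0)$. This step does not work as stated. First, no uniform Ricci or curvature bounds for $\omega_{\varepsilon}|_{\scX_t}$ are invoked that would put you in the Cheeger--Colding framework, and even granting a GH sublimit, identifying it with the algebro-geometric cscK degeneration of a strictly semistable polarised manifold is far beyond current technology outside the Fano K\"ahler--Einstein setting; the fact that the scalar curvature approaches the topological constant does not by itself force the metrics to converge to the cscK metric of the degeneration (this is essentially the content of open conjectures of Donaldson type, not a tool one may cite). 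Second, no such machinery is needed here, and this is precisely the point you flag as the ``main obstacle'' without resolving it: by construction the restriction of $\omega_{\varepsilon}$ to every fibre with $|t|\geq\varepsilon$ is $\alpha_{\varepsilon}=f_{\varepsilon}^*\omega$, i.e.\ isometric to $(M,\omega,J_{\varepsilon})$, and on the region $|t|\leq\varepsilon$ it is the interpolation controlled by Proposition \ref{prop:potentialbound}; since $J_{\varepsilon}\to J_0$ smoothly (the Kuranishi family of \cite{szekelyhidi10}), the fibre metric tensors converge \emph{smoothly} and uniformly in $t$ to the cscK metric on $X_0$. The jump of complex structure is therefore built into the family of model metrics, and the paper's proof consists of this direct observation plus the closeness of $\widetilde{\Omega}_{\varepsilon}$ to $\Omega_{\varepsilon}$; there is nothing to rule out (no bubbling, no competing degenerations). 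Your parenthetical remark about the potentials being ``built from $\omega_0$ and propagated outward via the $\mathbb{C}^*$-action'' gestures at this correct argument, but as written your proof rests on the compactness-plus-uniqueness step, which is a genuine missing ingredient.
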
 

Importantly, we can use our result to actually produce many new manifolds admitting extremal metrics. Often in the construction of extremal metrics on fibrations, it is difficult to actually generate examples. This is because the constructions requires one to solve a complicated PDE on the base of the fibration, at least when the fibres are not all isomorphic. In our case, the fibres are not all isomorphic, as there is a jump of biholomorphism class between the central fibre and the general fibre. However, the base equation just reduces to the equation for the Fubini-Study metric on $\mathbb{P}^1$, and so we see no issues coming from the base. 

To obtain examples, we therefore have to find explicit families of K-semistable manifolds degenerating to a cscK manifold. In general, this is a hard question, but a plethora of examples are known to exist, due to massive recent progress on the study of K-stability for Fano manifolds, and in particular for threefolds (see \cite{fanothreefolds} and the references therein). This allows us to produce many new manifolds admitting extremal metrics, see Theorem \ref{prop:examples}.

\subsection*{Outline} In Section \ref{sec:background} we recall some basic theory on cscK metrics relevant to our problem. In Section \ref{sec:metricsextension}, we provide a starting point for the construction, by defining a good relatively K\"ahler metric on the test configuration. Our main result, Theorem \ref{thm:main}, is proved in Section \ref{sec:construction}. In Section \ref{sec:GH} we provide applications to Gromov--Hausdorff limits of K-semistable manifolds, and finally in Section \ref{sec:examples}, we provide multiple examples of manifolds to which our construction applies.

\subsection*{Acknowledgements} We thank Ruadha\'i Dervan and Annamaria Ortu for enlightening discussions and comments related to this work. LMS is funded by a Marie Sk\l{}odowska-Curie Individual Fellowship, funded from the European Union's Horizon2020 research and innovation programme under grant agreement No 101028041. CS is funded by Villum Fonden Grant 0019098.

\section{Background on cscK metrics}
\label{sec:background}

In this section we recall some general theory regarding the cscK/extremal equation, its linearisation, and the deformation theory for cscK metrics. A good reference for most of the material in this section is the book of Sz\'ekelyhidi (\cite{szekelyhidi14book}).

\subsection{Constant scalar curvature and extremal metrics}

Let $X$ be a compact K\"ahler manifold of dimension $n$, with $\omega$ the K\"ahler form of a K\"ahler metric on $X$. Any other K\"ahler form on $X$ in the same class as $\omega$ can be described via a function, called a K\"ahler potential. More precisely, if we let $\omega_{\phi}$ for a real-valued function $\phi$ denote
$$
\omega_{\phi} = \omega + \ddb \phi ,
$$
then the space of K\"ahler potentials $\mathcal{K}$ parametrising the K\"ahler forms in the class $[\omega]$ is given by
$$
\mathcal{K} = \{ \phi : \omega_{\phi} > 0 \}.
$$

Associated to $\omega_{\phi}$ is the Ricci form 
$$
\rho_{\phi} = - \ddb \log (\omega_{\phi}^n ).
$$  
The corresponding bilinear form is the Ricci curvature of the metric associated to $\omega_{\phi}$. The scalar curvature of this metric is given by contracting with respect to the metric,
$$
S(\omega_{\phi}) = \Lambda_{\omega_{\phi}} ( \rho_{\phi}).
$$
We say $\omega_{\phi}$ has \emph{constant scalar curvature} if there exists a constant $c$ such that
$$
S(\omega_{\phi}) = c.
$$
The constant $c$ is predetermined by the topology of $X$ and the class $[\omega]$.

A generalisation of constant scalar curvature metrics is \emph{extremal} K\"ahler metrics, introduced by Calabi (\cite{calabi82}). These are defined as the critical points of the functional 
$$
\phi \mapsto \int_X S(\omega_{\phi}) \omega_{\phi}^n.
$$
Since $X$ is compact, this is equivalent to 
$$
\mathcal{D} ( S(\omega_{\phi} ) ) = 0,
$$
where $\mathcal{D} (f) = \dbar ( \nabla^{1,0}_{\omega_{\phi}} (f) ).$ This says that the gradient of the scalar curvature is a holomorphic vector field. Such functions are referred to as \emph{holomorphy potentials}.

The holomorphic vector fields that admit a \emph{complex-valued} holomorphy potential are precisely given by the holomorphic vector fields with a zero somewhere (\cite[Theorem 1]{lebrunsimanca94}). On the other hand, the space of holomorphic vector fields with a real holomorphy potential may depend on the metric. However, if the metric is invariant under the action of a fixed maximal torus, then the space of real holomorphic vector field does not depend on the metric, and the holomorphy potentials change in a predictable way. If we let $\overline{\mathfrak{h}}$ denote the space of real holomorphy potentials with respect to $\omega$, then a holomorphy potential for $\nabla^{1,0} h$ with respect to $\omega_{\phi}$ is given by
$$
h + \frac{1}{2} \langle \nabla h, \nabla \phi \rangle.
$$
 Thus, if our background metric is torus invariant, this means that to solve the extremal equation we seek a torus invariant function $\phi$ and a holomorphy potential $h$ with respect to $\omega$ such that
$$
S(\omega_{\phi}) = h + \frac{1}{2}  \langle \nabla h , \nabla \phi \rangle.
$$

\subsection{The linearisation of the scalar curvature operator}

The linearisation of the scalar curvature operator 
$$
\phi \mapsto S(\omega_{\phi})
$$
will be key in our construction. This operator is closely linked to the \emph{Lichnerowicz operator}, defined as follows. 

Let $\mathcal{D} = \mathcal{D}_{\omega} : C^{\infty}(X) \to \Gamma ( T^{1,0} X \otimes \Lambda^{0,1} X )$ be the operator 
$$
\mathcal{D} (f) = \dbar ( \nabla^{1,0}_{\omega} (f) ),
$$
as above. Let $\mathcal{D}_{\omega}^* : \Gamma ( T^{1,0} X \otimes \Lambda^{0,1} X ) \to C^{\infty}(X)$ be its formal adjoint, with respect to $\omega$. The Lichnerowicz operator is then the fourth order  self-adjoint operator 
$$
L_{\omega} : C^{\infty}(X) \to C^{\infty}(X)
$$
defined as 
$$
L_{\omega} = \mathcal{D}_{\omega}^*\mathcal{D}_{\omega}.
$$
This operator admits an expansion
$$
L_{\omega} (f) = \Delta^2 (f) + \langle \Ric (\omega), \ddb (f) \rangle_{\omega} + \frac{1}{2} \langle \nabla S (\omega) , \nabla f \rangle_{\omega},
$$
which in particular shows that it is elliptic. Note also that the kernel, and hence the cokernel, of $L_{\omega}$ consists precisely of the space $\overline{\mathfrak{h}}$ of real holomorphy potentials with respect to $\omega$.

The linearisation of the scalar curvature map at $0$ is then given by
$$
dS_0 (f) = - L_{\omega} (f)  + \frac{1}{2}  \langle \nabla_{\omega} S(\omega), \nabla_{\omega} f \rangle_{\omega}.
$$
At a non-zero value of $\phi$, the linearisation is given by the same formula, but with respect to $\omega_{\phi}$ instead of $\omega$.

\subsection{Kuranishi theory}\label{sec:kuranishi}

The construction relies on Kuranishi theory (\cite{kuranishi65}). We follow the exposition of Sz\'ekelyhidi (\cite{szekelyhidi10}). Let $(M, \omega)$ be a symplectic manifold. We will denote by
$$
\mathcal{J} (M, \omega) = \{ J \in \End (TM) : J^2 = - \Id, J^* \omega = \omega \}
$$
the space of $\omega$-compatible almost complex structures on $M$. Here $J^* \omega (v,w) = \omega (Jv,Jw).$ The tangent space to $\mathcal{J} (M, \omega)$ at $J$ is given by
$$
T_J \mathcal{J} (M, \omega) = \{ A \in \End (TM ) : A \circ J = - J \circ A, \omega( A (\cdot), \cdot) = \omega( \cdot, A (\cdot)) \}.
$$
The space $\mathcal{J} (M, \omega)$ is in itself a complex manifold, with complex structure at $J$ given by $A \mapsto J \circ A$.

Kuranishi used the above framework to construct a \emph{versal deformation space} of a cscK $(\scX_0, \mathcal{L}_0)$. This is a complex space $V$ together with a holomorphic map
$$
f : V \to \mathcal{J} (M, \omega)
$$
coming from a universal family
$$
\mathcal{Y} \to V.
$$ 
If $T_0$ is a maximal compact subgroup $T_0$ of $\Aut_0 (\scX_0, \mathcal{L}_0)$, then this can be taken to be equivariant under the $T_0$ action. We assume that $(\scX_0, \mathcal{L}_0)$ is the central fibre of a test configuration of some semistable $(X,L)$ and we can then also assume that $T_0$ contains a maximal compact subgroup of the reduced automorphism group of $(X,L)$.

As a smooth manifold, $\mathcal{Y}$ is simply $V \times M$, and the action is the product action. If we have some initial deformation of $(X,L)$ to $(\scX_0, \mathcal{L}_0)$, this then arises, near the central fibre, from a map to $V$, and so we obtain a family 
$$
(\scX, \mathcal{L}) \to \Delta
$$
over a disk $\Delta \subset \mathbb{C}$, containing $0$, such that the central fibre is $(\scX_0, \mathcal{L}_0)$, and the non-zero fibres are isomorphic to $(X,L)$. The fact that such a family exists follows by \cite{szekelyhidi10} and the fact that the central fibre remains $(\scX_0, \mathcal{L}_0)$ follows by uniqueness of cscK degenerations (\cite{chensun14}). The family $(\scX, \mathcal{L})$ has a local $\mathbb{C}^*$ action, and we can equivariantly glue the restriction of $\scX$ to $\Delta \setminus \{ 0 \}$ to $X \times \mathbb{C}$, with the trivial action on the first component. We thus obtain a test configuration 
$$
\pi : (\scX, \mathcal{L}) \to \mathbb{P}^1
$$
over $\mathbb{P}^1$. Over the disk $\Delta$, we have a relatively K\"ahler metric on $\scX$, given simply by the symplectic form $\omega$, which is cscK on the central fibre. Note that the stabiliser of the $T_0$ action on $V$ at the point corresponding to $(X,L)$ is a torus $T$ inside the automorphism group of $(X,L)$, and if we assume $T_0 / T \cong \mathbb{C}^*$, which is the $\mathbb{C}^*$ action on the total space of the test configuration, then we have a $T_0$ action on the total space $\scX$ of the test configuration.

\section{Extending metrics}
\label{sec:metricsextension}

Here we will explain how to obtain a family of global $S^1$-invariant metrics on $\scX$ from a local deformation. We will also study their fibrewise properties, and in particular show that the fibrewise average scalar curvature goes to $0$ in the family.

The test configurations we consider arise in the manner described in Section \ref{sec:kuranishi}. We have a trivialisation of $\scX$ over $\mathbb{C}^*$ and diffeomorphisms $f_t : M \to M$ such that $f_t^* J_t = J$ and K\"ahler metrics $ \alpha_t = f^*_t \omega = \omega_0 + \ddb \phi_t$, for some background K\"ahler form $\omega_0$ on $X$. Let $\chi  (s) : [0, \infty) \to [0,1]$ be a cut-off function which is $1$ when $s \leq \frac{1}{2}$ and $0$ when $s \geq \frac{3}{4}.$ Define a new $2$-form $\omega_{\varepsilon}$ by
$$
\omega_{\varepsilon} = \omega_0 + \ddb \left( \chi \left( \frac{|t|}{\varepsilon} \right) \phi_t + (1-\chi \left( \frac{|t|}{\varepsilon} \right) ) \phi_{\varepsilon}  \right).
$$
The restriction of $\omega_{\varepsilon}$ is smooth relative K\"ahler form. The K\"ahler condition on the fibres $\scX_t$ of the test configuration is immediate: on a given fibre, we are taking a convex combination of K\"ahler forms ($\chi$ is only dependent on the base variable, and so its derivatives won't enter into showing positivity in the vertical direction). 

Note that this form is equal to $\alpha_t$ when $|t| \leq \frac{\varepsilon}{2}$ and equal to $\alpha_{\varepsilon}$ when $|t| \geq \frac{3 \varepsilon}{4}$. With respect to the original structure, we then have a path $I_t$ of complex structures such that $I_t = J_t$ when $|t| \leq \frac{\varepsilon}{2}$ and equal to $J_{\varepsilon}$ when $|t| \geq \frac{3 \varepsilon}{4}$. Thus we can fill in to a family over $\mathbb{P}^1$ by putting $I_t = I_{\varepsilon}$ when $|t|> \varepsilon$ too. Put differently, the relatively K\"ahler form $\omega_{\varepsilon}$ defined originally on the test configuration over a disk around the origin in $\mathbb{C}$, can be extended to a relatively K\"ahler form, still denoted $\omega_{\varepsilon}$, on the total space of the compactified test configuration over $\mathbb{P}^1$.

A key estimate is the following.
\begin{proposition}
\label{prop:potentialbound}
There exists positive constants $C>0$ such that for all $\varepsilon >0$ sufficiently small,
$$
\| \phi_t - \phi_{\varepsilon} \|_{C^{4,\alpha} (X)} \leq C \varepsilon
$$
for $t \in [\frac{\varepsilon}{2}, \varepsilon ].$ 
\end{proposition}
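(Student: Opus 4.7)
The plan is to show that $t \mapsto \phi_t$ is smooth with a uniform $C^{4,\alpha}(X)$-bound on its $t$-derivative on a small disk $\Delta_0$ around $0 \in \mathbb{C}$. Once this is in hand, the fundamental theorem of calculus along the segment from $\varepsilon$ to $t$ gives
$$
\|\phi_t - \phi_\varepsilon\|_{C^{4,\alpha}(X)} \leq |t - \varepsilon|\sup_{|\tau| \leq \tau_0}\Bigl\|\tfrac{d\phi_\tau}{d\tau}\Bigr\|_{C^{4,\alpha}(X)},
$$
and since $|t-\varepsilon| \leq \varepsilon/2$ for $t \in [\varepsilon/2, \varepsilon]$, the stated bound follows with $C$ equal to half this supremum.

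The main input is that $\alpha_t = f_t^*\omega$ extends to a smooth family of closed $2$-forms on $M$ across $t = 0$, despite the fact that $f_t$ degenerates there from a biholomorphism to a non-diffeomorphism. To arrange this, I would choose $f_t$ to come from the $\mathbb{C}^*$-action on the Kuranishi universal family $\mathcal{Y} = V \times M$ along the orbit $\sigma(t) = t \cdot v$, and work in the smooth trivialisation of $\mathcal{Y}$, where the symplectic form $\omega$ is constant on fibres while $J_v$ varies smoothly with $v$. By the Bialynicki--Birula theorem, the $\mathbb{C}^*$-action extends smoothly across $\lambda = 0$ with smooth limit map, so $f_t : M \to M$ forms a smooth family of smooth maps for $t \in \Delta_0$; the map $f_0$ has image in the fixed-point set of the $\mathbb{C}^*$-action on $(M, J_0)$, but the pullbacks $\alpha_t = f_t^*\omega$ nevertheless form a smooth family of closed $2$-forms, all in the cohomology class $[\omega] \in H^2(M;\mathbb{R})$ (for $t \neq 0$ this is clear from $f_t$ being isotopic to the identity, and it extends to $t = 0$ by continuity).

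Given this, the $\partial\bar\partial$-lemma on $X = (M, J)$ combined with the normalisation $\int_X \phi_t \, \omega_0^n = 0$ gives the explicit expression $\phi_t = G_{\omega_0}\bigl(\mathrm{tr}_{\omega_0}(\alpha_t - \omega_0)\bigr)$, where $G_{\omega_0}$ is the Green operator of the $\omega_0$-Laplacian, a bounded linear map $C^{2,\alpha}(X) \to C^{4,\alpha}(X)$. Since $\alpha_t$ is smooth in $t$ with uniform $C^{2,\alpha}$-bounds on its $t$-derivative on $\Delta_0$, the same holds for $\phi_t$ in $C^{4,\alpha}$, completing the argument. The principal technical obstacle is verifying the smooth extension of $\alpha_t$ across $t = 0$ despite the degeneration of $f_t$, which rests on the use of the smooth (as opposed to holomorphic) trivialisation of the Kuranishi family and the Bialynicki--Birula extension of the $\mathbb{C}^*$-action.
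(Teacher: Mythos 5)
Your strategy hinges on the claim that $\alpha_t = f_t^*\omega$ extends to a smooth family of closed $2$-forms across $t=0$, with $f_t$ converging smoothly to a map $f_0$ whose image lies in the fixed locus of the $\mathbb{C}^*$-action, while each $\alpha_t$ (and hence the limit) stays in the class $[\omega]$. These requirements are mutually contradictory. If $f_0(M)$ lies in a submanifold of real dimension $<2n$, then $df_0$ has rank $<2n$ everywhere, so $f_0^*(\omega^n)\equiv 0$ and $\int_M f_0^*\omega^n=0$; but $\int_M f_t^*\omega^n=\int_M\omega^n>0$ for every $t\neq 0$ (the $f_t$ are diffeomorphisms isotopic to the identity), so $f_t$ cannot converge to $f_0$ even in $C^1$, and $[f_0^*\omega]$ cannot equal $[\omega]$. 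What Bialynicki--Birula theory actually gives is a \emph{pointwise} limit $\lim_{t\to 0}t\cdot y$ landing in the fixed locus, and this limit map is in general discontinuous on the general fibre and the convergence is far from uniform (already visible for $\lambda\cdot([x:y],t)=([\lambda x:y],\lambda t)$ on $\mathbb{P}^1\times\mathbb{C}$, where the pulled-back Fubini--Study forms concentrate rather than converge). In a genuinely jumping test configuration the identification maps $f_t$ degenerate precisely so that no uniform $C^{4,\alpha}$ bound on $\partial_t\phi_t$ up to $t=0$ can hold: such a bound would make the $\phi_t$ converge in $C^{4,\alpha}$, i.e.\ the path of K\"ahler metrics associated to the degeneration would be metrically trivial, whereas in fact the individual potentials $\phi_t$ are expected to diverge as $t\to 0$ (only \emph{differences} for comparable parameters are controlled). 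Even the weaker, realistic bound $\|\partial_\tau\phi_\tau\|\leq C/|\tau|$ (coming from the $\mathbb{C}^*$-flow vector field) fed into your fundamental-theorem-of-calculus step over $[\varepsilon/2,\varepsilon]$ would only yield $\|\phi_t-\phi_\varepsilon\|=O(1)$, not the required $O(\varepsilon)$.

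The paper avoids any statement about $\phi_t$ itself at $t=0$. It works with the difference directly: using that $f_t$ arises from Moser's trick (the time-one flow of $\nu_t$ with $Jd\phi_t+\iota_{\nu_t}\omega_t=0$), it establishes mutual bounds between $\|J_t-J_\varepsilon\|_{C^{k,\alpha}}$ and $\|\phi_t-\phi_\varepsilon\|_{C^{k+2,\alpha}}$ (after normalising the $\phi_t$ to have average zero, so the $C^0$ part is controlled by the seminorms), and then uses the smoothness of the Kuranishi family $w\mapsto J_w$ at $0$ to get $\|J_t-J_\varepsilon\|=O(|t-\varepsilon|)=O(\varepsilon)$ for $t\in[\varepsilon/2,\varepsilon]$. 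That transfer from the complex-structure side, where uniform smoothness at the puncture genuinely holds, to the potential side is the missing idea in your proposal; your Green-operator step at the end is fine but sits downstream of the false extension claim.
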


The key to establishing this result is to relate $\phi_t - \phi_{\varepsilon}$ to $J_t - J_{\varepsilon}$, where we already know such bounds. We may assume that $\omega$ the Kuranishi map $V \to \mathcal{J} (M, \omega)$ is defined on the ball of radius $2$. In particular, for the fibre at $1$, the symplectic forms $\omega$ and $\omega_1$ are both K\"ahler, in the same class, on $X= (M, J_1)$. The upshot is that in the description of $\omega_t$ we can take the reference metric to actually be the K\"ahler metric whose K\"ahler form is $\omega$ (and whose complex structure is $J_1$). Of course, the required bound is independent of background metric chosen, as we are looking at the \emph{difference} of $\phi_t$ and $\phi_{\varepsilon}$. However, it is convenient to pick $\omega$ as the reference K\"ahler form. 
\begin{proof} 
We know that $J_t = f_t^* J$ for a diffeomorphism $f_t : M \to M$. This diffeomorphism is produced from Moser's trick (see e.g. \cite[Section 7.2]{cannasdasilva01}), and is the time $1$ flow of the vector field $\nu_t$ satisfying 
$$
J d \phi_t + \iota_{\nu_t} \omega_t = 0,
$$ 
the equation coming from the fact that $\omega$ and $\omega_t$ are related by 
$$
\omega_t = \omega + \ddb \phi_t = \omega + d( J d \phi_t).
$$ 
Thus the $C^0$ norm of $f_t \circ f_{\varepsilon}^{-1}$, which can be bounded in terms of that of the vector fields, can be bounded in terms of the $C^1$ and $C^2$ semi-norms of the $\phi_t - \phi_{\varepsilon}$ (the $C^0$-norm does not enter into the expression, we are always taking at least one derivative). The upshot is that we can mutually bound, independently of $t$, the quantities
$$
\| J_t - J_{\varepsilon} \|_{C^0}
$$
and
$$
| \phi_t - \phi_{\varepsilon}|_{C^1} + | \phi_t - \phi_{\varepsilon} |_{C^2},
$$
where we use the notation $|\cdot|_{C^i}$ for the semi-norm, so that $\| \cdot \|_{C^i} = \sum_{j=0}^i |\cdot |_{C^j}.$ 

If we then work under the normalization that the $\phi_t$ are of average $0$ with respect to $\omega$, so that the $\phi_t$ lie in the image of the Laplacian on the central fibre, then we can bound the $C^0$-norm by the $C^2$ seminorm. So, under this normalisation for the $\phi_t$, we can mutually bound
$$
\| J_t - J_{\varepsilon} \|_{C^0}
$$
and
$$
\| \phi_t -  \phi_{\varepsilon}\|_{C^2}.
$$
Similarly, we can mutually bound higher derivatives. In particular, we can mutually bound, independently of $t$, 
$$
\| J_t - J_{\varepsilon} \|_{C^{2,\alpha}}
$$
and
$$
\| \phi_t - \phi_{\varepsilon} \|_{C^{4,\alpha}}.
$$
This gives the required bound for $\phi_t - \phi_{\varepsilon}$, since both $J_t$ and $J_{\varepsilon}$ approach $J_0$ as $\varepsilon \to 0$.
\end{proof}

The next goal will be to understand the expansion of the scalar curvature on $\scX$, when choosing a certain relationship between the rate of the parameter $\varepsilon$ of the approximate solution and the parameter $k$ for the polarisation. To understand the dependence on the rate $\varepsilon$, we use techniques reminiscent of the blowup situation, since our method of gluing metrics over an annular region is the construction being utilised there. Specifically, we are following the line of proof given in \cite[Proposition 20, Lemma 21, Lemma 24]{szekelyhidi12}.

In the vertical directions, we have the following estimate for the initial metric $\omega_{\varepsilon}$.
\begin{lemma}[{\cite{kuranishi65, szekelyhidi10}}]
Let $\hat S$ be the average scalar curvature of the fibres of $\scX \to \mathbb{P}^1$. Then
$$
S(\alpha_{\varepsilon}) - \hat S = O(\varepsilon).
$$
\end{lemma}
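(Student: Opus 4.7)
The plan is to trace the scalar curvature back to the central fibre via the Kuranishi diffeomorphisms. Since $\alpha_\varepsilon = f_\varepsilon^* \omega$ and $f_\varepsilon^* J_\varepsilon = J$, the scalar curvature is diffeomorphism invariant, so
$$
S(\alpha_\varepsilon) = f_\varepsilon^*\bigl( S_{J_\varepsilon}(\omega) \bigr),
$$
where $S_{J_\varepsilon}(\omega)$ denotes the scalar curvature of the Kähler metric on $(M, J_\varepsilon)$ determined by the symplectic form $\omega$. Thus it suffices to show that $S_{J_\varepsilon}(\omega) - \hat S = O(\varepsilon)$ uniformly on $M$.

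First I would recall from Kuranishi theory (as formulated in \cite{szekelyhidi10}) that the map $V \to \mathcal{J}(M, \omega)$ is a smooth (in fact analytic) family of $\omega$-compatible complex structures with $J_0$ the cscK complex structure on the central fibre, and that the parameter $t \in V$ is the coordinate on the Kuranishi slice. In particular, for any fixed Hölder space $C^{k,\alpha}$ one has
$$
\| J_\varepsilon - J_0 \|_{C^{k,\alpha}(M)} = O(\varepsilon).
$$
Next, the scalar curvature, viewed as a nonlinear differential operator in $J$ for a fixed symplectic form $\omega$, depends smoothly on $J$; concretely it is a polynomial in $J$ and its first two derivatives, divided by a nonvanishing determinant. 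Hence, composing with the $O(\varepsilon)$ bound above, one obtains
$$
\| S_{J_\varepsilon}(\omega) - S_{J_0}(\omega) \|_{C^{0}(M)} = O(\varepsilon).
$$

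Since $(M, J_0, \omega)$ is cscK with scalar curvature equal to the topological constant $\hat S$ (which is the same on every fibre, as the fibres of the test configuration lie in the same Kähler class), we have $S_{J_0}(\omega) = \hat S$, and therefore $S_{J_\varepsilon}(\omega) - \hat S = O(\varepsilon)$ in $C^0$. Pulling back by the diffeomorphism $f_\varepsilon$ preserves the pointwise norm (it is just a change of coordinates on $M$), so $\| S(\alpha_\varepsilon) - \hat S \|_{C^0(M)} = O(\varepsilon)$, which is the claim.

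The only step requiring genuine care is the $O(\varepsilon)$ control on $J_\varepsilon - J_0$ in a fixed Hölder norm; this is precisely what the Kuranishi construction provides, and an identical argument already appears implicitly in the proof of Proposition~\ref{prop:potentialbound}. Everything else is a soft consequence of the smooth dependence of the scalar curvature on the compatible almost complex structure. I do not expect to need a sharper bound here (e.g.\ $O(\varepsilon^2)$ coming from the vanishing of the linearised moment map on the Kuranishi slice), since the $O(\varepsilon)$ bound is sufficient for the subsequent expansion of $S(\omega_\varepsilon + k \omega_B)$ in the adiabatic parameter.
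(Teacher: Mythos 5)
Your argument is correct and coincides with what the paper relies on: the lemma is stated with a citation to Kuranishi and Sz\'ekelyhidi rather than proved, and the content of those references is precisely your chain of reasoning --- diffeomorphism invariance giving $S(\alpha_\varepsilon)=f_\varepsilon^*\bigl(S_{J_\varepsilon}(\omega)\bigr)$, smoothness of the Kuranishi family giving $\|J_\varepsilon-J_0\|_{C^{k,\alpha}}=O(\varepsilon)$, smooth dependence of the scalar curvature on the compatible complex structure, and $S_{J_0}(\omega)=\hat S$ on the cscK central fibre. Your closing remark is also consistent with the paper, which notes the sharper $O(\varepsilon^2)$ statement from Sz\'ekelyhidi's Proposition 8 but does not use it.
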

In fact, from \cite[Proof of Proposition 8]{szekelyhidi10}, this can be improved so that $S(\alpha_{\varepsilon}) - \hat S = O(\varepsilon^2)$, and, moreover, the $O(\varepsilon^2)$-term is the restriction to the fibre of a holomorphy potential of the $S^1$-action on the test configuration $\scX \to \mathbb{P}^1$. However, we will not need this for our construction.

We next want to show that the perturbed K\"ahler forms $\omega_{\varepsilon}$ satisfy a similar bound. From the mean value theorem, the scalar curvature of $\omega_{ \varepsilon}$ can be controlled using the linearisation. More precisely, we have that 
$$
\omega_{ \varepsilon}|_{\scX_t} = \alpha_{\varepsilon} + \ddb \left( \chi \left( \frac{|t|}{\varepsilon} \right) (\phi_t - \phi_{\varepsilon})  \right) .
$$
Therefore, by the mean value theorem,
$$
S(\omega_{\varepsilon}) = S(\alpha_{\varepsilon}) + L_{r}  \left( \chi \left( \frac{|t|}{\varepsilon} \right) (\phi_t - \phi_{\varepsilon})  \right) ,
$$
where $L_r$ is the linearisation of the scalar curvature operator at the metric
$$
\alpha_{\varepsilon} + r \cdot \ddb \left( \chi \left( \frac{|t|}{\varepsilon} \right) (\phi_t - \phi_{\varepsilon})  \right),
$$ 
for some $r \in [0,1]$. 

Our goal is to show that $S(\omega_{\varepsilon})$  is sufficiently close to $S(\alpha_{\varepsilon})$. The above shows that it will be key to understand the linearised operator of a perturbation of $\omega_{\varepsilon}$, which we now do. We emphasize that the below is a \emph{fibrewise} statement, as the $\alpha_{\varepsilon}$ are not even K\"ahler, only relatively K\"ahler.
\begin{proposition}
\label{prop:linearisationchange}
There exists $c, C>0$ such that if $\| \phi \|_{C^{4,\alpha}} \leq c$, then 
$$
\| L_{\alpha_{\varepsilon} + \ddb \phi} (f) - L_{\alpha_{\varepsilon}}  (f) \|_{C^{0,\alpha}} \leq C \| \phi \|_{C^{4,\alpha}} \| f \|_{C^{4,\alpha}}
$$
on each fibre of $\scX \to \mathbb{P}^1$.
\end{proposition}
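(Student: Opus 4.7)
The plan is to exploit the fact that the Lichnerowicz operator $L_\omega$ depends polynomially on the underlying metric and its first few derivatives. Using the expansion
\[
L_\omega f = \Delta_\omega^2 f + \langle \Ric(\omega), \ddb f \rangle_\omega + \tfrac{1}{2}\langle \nabla S(\omega), \nabla f \rangle_\omega
\]
recalled in Section~\ref{sec:background}, $L_\omega$ is a fourth-order differential operator on $f$ whose coefficients are smooth (in fact polynomial) functions of the components of $g$, $g^{-1}$, and a bounded number of derivatives of $g$. Under a perturbation $\omega \mapsto \omega + \ddb \phi$ the metric tensor changes by the complex Hessian of $\phi$, so the coefficients of $L_\omega$ are perturbed by expressions controlled in $C^{0,\alpha}$ by $\|\phi\|_{C^{4,\alpha}}$.

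To make this quantitative I would integrate along a straight-line path of metrics. Choose $c$ small enough that whenever $\|\phi\|_{C^{4,\alpha}} \leq c$ the forms $\omega_s := \alpha_\varepsilon + s\,\ddb\phi$ are K\"ahler for all $s \in [0,1]$ (this is possible uniformly in $\varepsilon$ since $\alpha_\varepsilon$ is bounded away from degeneracy, as discussed below). The fundamental theorem of calculus gives
\[
L_{\alpha_\varepsilon + \ddb \phi}(f) - L_{\alpha_\varepsilon}(f) = \int_0^1 \frac{d}{ds}L_{\omega_s}(f)\, ds.
\]
A direct computation shows that each term of $\tfrac{d}{ds}L_{\omega_s}(f)$ is, schematically, a polynomial in the components of $g_s$, $g_s^{-1}$ and their derivatives, multiplied by a derivative of $\ddb\phi$ of order at most two, multiplied by a derivative of $f$ of order at most four. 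The $C^{0,\alpha}$-norm of such a term is bounded by a constant depending only on an appropriate $C^{k,\alpha}$-bound for $\omega_s$ and on its positivity, times $\|\phi\|_{C^{4,\alpha}}\|f\|_{C^{4,\alpha}}$. Integrating over $s \in [0,1]$ then yields the required estimate.

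The final ingredient is uniformity of $c$ and $C$ in $\varepsilon$ and in the fibre. The forms $\alpha_\varepsilon = \omega_0 + \ddb\phi_\varepsilon$ are produced by the Kuranishi construction of Section~\ref{sec:kuranishi} and vary smoothly with $\varepsilon$ up to and including $\varepsilon = 0$, so they are uniformly bounded in every $C^{k,\alpha}$ norm, both fibrewise and uniformly in the fibre over the compact base $\mathbb{P}^1$. This allows the constants above to be chosen independently of $\varepsilon$ and of the fibre. The main technical burden, as in most such expansion arguments, is the bookkeeping required to verify that no term of $\tfrac{d}{ds}L_{\omega_s}(f)$ involves a derivative of $\phi$ or of $f$ of order higher than four; this is forced by the structure of $L_\omega$ as a fourth-order operator whose coefficients only involve a fixed finite number of derivatives of the metric.
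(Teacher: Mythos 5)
Your overall strategy is essentially the paper's: its proof consists of exactly the two ingredients you identify, namely the uniform bound $\alpha_{\varepsilon} \geq C \omega_0$ (justified there by the observation that the metrics $\alpha_{\varepsilon}(J\cdot,\cdot)$ are isometric to $\omega(J_{\varepsilon}\cdot,\cdot)$, a smooth family of metrics that makes sense up to and including $\varepsilon = 0$), followed by the standard expansion of the metric quantities appearing in the operator, for which the paper simply cites \cite[Lemma 8.13]{szekelyhidi14book}; your path-of-metrics/mean-value formulation is precisely that standard technique, and your justification of uniformity in $\varepsilon$ and in the fibre matches the paper's.

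There is, however, one concrete flaw in your bookkeeping. You take the expansion $L_{\omega}f = \Delta_{\omega}^2 f + \langle \Ric(\omega), \ddb f\rangle_{\omega} + \tfrac12 \langle \nabla S(\omega), \nabla f\rangle_{\omega}$ and claim that every term of $\tfrac{d}{ds}L_{\omega_s}(f)$ involves at most two derivatives of $\ddb\phi$. For the coefficient $\nabla S(\omega_s)$ this is false: it involves three derivatives of the metric, hence five derivatives of $\phi$, and its $s$-derivative is therefore not controlled by $\|\phi\|_{C^{4,\alpha}}$. For the raw Lichnerowicz operator written in this form the claimed estimate actually fails (indeed $L_{\omega_s}(f)$ need not even lie in $C^{0,\alpha}$ when $\phi$ is merely $C^{4,\alpha}$), so your concluding assertion that the order-four bound is ``forced by the structure of $L_{\omega}$'' does not hold for that term. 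The resolution is that the operator in Proposition \ref{prop:linearisationchange} is the one used just before it via the mean value theorem, namely the linearisation of the scalar curvature map, in which the gradient terms cancel: $dS_{\omega_\phi}(f) = -\Delta^2_{\omega_\phi} f - \langle \Ric(\omega_\phi), \ddb f\rangle_{\omega_\phi}$. With this expression every coefficient involves at most two derivatives of the metric, i.e.\ at most four derivatives of $\phi$, and your path-integral argument, together with the uniform non-degeneracy of $\alpha_{\varepsilon}$, then goes through as you describe.
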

\begin{proof}
This is a direct consequence of the fact that, with respect to some/any background metric $\omega_0$, we have the uniform lower bound
$$
\alpha_{\varepsilon} \geq C \omega_0.
$$
This follows because each member of the corresponding family of metrics $\alpha_{\varepsilon} (J(\cdot), \cdot)$ are isometric to $\omega (J_{\varepsilon} (\cdot), \cdot),$ and this is a smooth family of metrics, for a compact set of parameter values (the \emph{metric} also makes sense when $\varepsilon = 0$). The result then follows by expanding the metric quantities involved in the expression of $L_{\alpha_{\varepsilon}}$, using standard techniques e.g. as in \cite[Lemma 8.13]{szekelyhidi14book}.
\end{proof}

The above proposition reduces the bound on $S(\omega_{\varepsilon})$ to understanding the mapping properties of $\alpha_{\varepsilon}$. For now all we need is that $L_{\alpha_{\varepsilon}}$ is an operator that is uniformly bounded in $\varepsilon$ on the fibres. However, we prove a more refined result that will be need later. Let $\overline{\mathfrak{h}}$ be the space of holomorphy potentials on the cscK central fibre $\scX_0$, with respect to $\omega$. Then $\overline{\mathfrak{h}} = \langle h, 1 \rangle$, for some average zero holomorphy potential $h$ on $\scX_0$. We then have the following.
\begin{proposition}
\label{prop:fibrewiselinear}
There is an expansion 
$$
L_{\alpha_{\varepsilon}} = L_{\omega} +  O(\varepsilon),
$$
where $L_{\omega}$ is the linearisation of the scalar curvature of $\omega$ at $J_0$. In particular, $
L_{\alpha_{\varepsilon}}$ is surjective modulo $\overline{\mathfrak{h}}$ to leading order.
\end{proposition}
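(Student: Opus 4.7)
The strategy is to transport $L_{\alpha_\varepsilon}$, via the Kuranishi biholomorphism $f_\varepsilon$, to an operator on the fixed symplectic manifold $(M, \omega)$ in which only the compatible complex structure varies with $\varepsilon$. By construction $f_\varepsilon^* J_\varepsilon = J$ and $\alpha_\varepsilon = f_\varepsilon^* \omega$, so $f_\varepsilon : (M, J, \alpha_\varepsilon) \to (M, J_\varepsilon, \omega)$ is a biholomorphism of K\"ahler manifolds, and its pullback on functions intertwines $L_{\alpha_\varepsilon}$ with the Lichnerowicz operator $L_\omega^{J_\varepsilon}$ of $(J_\varepsilon, \omega)$. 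Under this identification the claimed expansion reduces to
\[
L_\omega^{J_\varepsilon} = L_\omega^{J_0} + O(\varepsilon)
\]
as bounded operators $C^{4,\alpha}(M) \to C^{0,\alpha}(M)$, where $L_\omega = L_\omega^{J_0}$.

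To establish this expansion I would Taylor expand in the complex structure. The Kuranishi map $V \to \mathcal{J}(M, \omega)$ is smooth (in fact analytic), so the path $\varepsilon \mapsto J_\varepsilon$ satisfies $J_\varepsilon - J_0 = O(\varepsilon)$ in every $C^k$-norm. The coefficients of the Lichnerowicz operator are polynomial functions of the metric $g_J(\cdot,\cdot) = \omega(J\cdot, \cdot)$ and its derivatives (through $\Ric$ and $S$), so they depend smoothly on $J$; expanding the pointwise expression
\[
L_\omega^J (f) = \Delta^2 f + \langle \Ric(\omega, J), \ddb f \rangle + \tfrac{1}{2}\langle \nabla S(\omega, J), \nabla f \rangle
\]
in the difference $J_\varepsilon - J_0$ yields the required $O(\varepsilon)$ bound on the operator norm. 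This is morally the same argument as Proposition \ref{prop:linearisationchange}, but with the complex structure rather than the K\"ahler potential as the small parameter.

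The surjectivity claim then follows by perturbation. Since $L_\omega$ is self-adjoint and elliptic on $(\scX_0, \omega)$ with $\ker L_\omega = \overline{\mathfrak{h}}$, it restricts to an isomorphism $\overline{\mathfrak{h}}^\perp \cap C^{4,\alpha} \to \overline{\mathfrak{h}}^\perp \cap C^{0,\alpha}$. Composing $L_{\alpha_\varepsilon}$ with the $L^2(\omega)$-orthogonal projection onto $\overline{\mathfrak{h}}^\perp$ and invoking the $O(\varepsilon)$ bound yields, for $\varepsilon$ sufficiently small, an isomorphism with uniformly bounded inverse --- precisely the meaning of ``surjective modulo $\overline{\mathfrak{h}}$ to leading order''. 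The main delicate step throughout is bookkeeping of uniformity: the constants in the $O(\varepsilon)$ bound and the operator norms of $f_\varepsilon^*$ and $(f_\varepsilon^{-1})^*$ on H\"older spaces must be uniform as $\varepsilon \to 0$. Since the family $(J_\varepsilon, f_\varepsilon)$ varies smoothly over a compact parameter set and $f_\varepsilon$ is produced by Moser's trick from the $C^{k,\alpha}$-controlled potentials of Proposition \ref{prop:potentialbound}, this uniformity is standard.
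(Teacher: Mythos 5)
Your proposal is correct and follows essentially the same route as the paper: the paper likewise exploits the identification of $(\alpha_{\varepsilon},J)$ with $(\omega,J_{\varepsilon})$, the smoothness of the Kuranishi family giving $J_{\varepsilon}-J_0=O(\varepsilon)$ (equivalently $g_{\varepsilon}-g_0=O(\varepsilon)$), and the smooth dependence of the coefficients of the linearised operator on this data, reducing to the Lichnerowicz operator since $S(\alpha_{\varepsilon})$ is constant to order $\varepsilon$. Your explicit perturbation argument for surjectivity modulo $\overline{\mathfrak{h}}$ is a harmless elaboration of what the paper leaves as an immediate consequence of the expansion.
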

\begin{proof}
The scalar curvature of $\alpha_{\varepsilon}$ is constant to order $\varepsilon$, so it suffices to establish the above for the Lichnerowicz operator. 

We have that the Lichnerowicz operator is $\mathcal{D}_{\varepsilon}^*\mathcal{D}_{\varepsilon}$, where 
$$
\mathcal{D}_{\varepsilon} = \dbar_{J_{\varepsilon}} (\nabla^{1,0}_{\alpha_{\varepsilon}} ).
$$
We therefore have that
$$
\mathcal{D}_{\varepsilon} - \mathcal{D}_{0} = O(\varepsilon).
$$
This uses that the metrics $g_{\varepsilon}$ produced from the $\alpha_{\varepsilon}$ satisfy
$$
g_{\varepsilon} - g_0 = O(\varepsilon)
$$
and hence the same holds for the inverses, and the gradient. Again using this property, it follows that we have the same property for the adjoint, and hence for the full Lichnerowicz operator.
\end{proof}

With this in place, we finally have the estimate on the fibrewise scalar curvature that we need.
\begin{proposition}
Let $\hat S$ be the average scalar curvature of the fibres of $\scX \to \mathbb{P}^1$. Then
$$
S(\omega_{\varepsilon}|_{\scX_t}) - \hat S = O(\varepsilon).
$$
for any $t \in \mathbb{P}^1$.
\end{proposition}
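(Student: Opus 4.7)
The plan is to partition $\mathbb{P}^1$ according to where $|t|$ lies relative to $\varepsilon$ and verify the bound separately on each region. On the disk $|t| \leq \varepsilon/2$ the cut-off satisfies $\chi(|t|/\varepsilon)=1$, so $\omega_{\varepsilon}|_{\scX_t} = \alpha_t$ and the Kuranishi estimate gives $S(\alpha_t) - \hat S = O(t) = O(\varepsilon)$. On the region $|t| \geq 3\varepsilon/4$ the cut-off is $0$, so the restriction is $\alpha_{\varepsilon}$ on the fibre (still carrying the complex structure $J_t$ for $3\varepsilon/4 \leq |t| \leq \varepsilon$, and the extended structure $J_{\varepsilon}$ for $|t|>\varepsilon$), and again $S(\alpha_{\varepsilon}) - \hat S = O(\varepsilon)$ by the Kuranishi lemma applied to the fibre in question. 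Only the transition annulus $\varepsilon/2 \leq |t| \leq 3\varepsilon/4$ needs further work.

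On the transition annulus I would rewrite the form as
$$
\omega_{\varepsilon}|_{\scX_t} \;=\; \alpha_{\varepsilon} + \chi\!\left(\tfrac{|t|}{\varepsilon}\right)\ddb(\phi_t - \phi_{\varepsilon}),
$$
noting that $\chi(|t|/\varepsilon)$ is a constant along the fibre so none of its derivatives enter and the base-direction behaviour of $\omega_{\varepsilon}$ plays no role in the fibrewise scalar curvature. Applying the mean value theorem to the scalar curvature operator as in the discussion preceding Proposition \ref{prop:linearisationchange} yields
$$
S(\omega_{\varepsilon}|_{\scX_t}) \;=\; S(\alpha_{\varepsilon}) + L_r\!\left(\chi\!\left(\tfrac{|t|}{\varepsilon}\right)(\phi_t - \phi_{\varepsilon})\right),
$$
where $L_r$ is the fibrewise linearisation at $\alpha_{\varepsilon} + r\cdot\chi(|t|/\varepsilon)\ddb(\phi_t-\phi_{\varepsilon})$ for some $r \in [0,1]$. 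By Proposition \ref{prop:potentialbound}, $\|\phi_t - \phi_{\varepsilon}\|_{C^{4,\alpha}} \leq C\varepsilon$ uniformly for $t$ in this annulus, so the perturbing potential is arbitrarily small for $\varepsilon$ small; Proposition \ref{prop:linearisationchange} then lets me replace $L_r$ by $L_{\alpha_{\varepsilon}}$ up to an operator error that is $O(\varepsilon)$ on $C^{4,\alpha}\to C^{0,\alpha}$, and Proposition \ref{prop:fibrewiselinear} shows $L_{\alpha_{\varepsilon}} = L_{\omega} + O(\varepsilon)$, so $L_r$ has operator norm bounded independently of $\varepsilon$ and $t$. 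Feeding in the $O(\varepsilon)$ bound on $\chi(|t|/\varepsilon)(\phi_t-\phi_{\varepsilon})$ and adding the $O(\varepsilon)$ bound on $S(\alpha_{\varepsilon}) - \hat S$ gives the claimed estimate.

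The main obstacle is ensuring that the fibrewise linearisation $L_r$ is uniformly bounded as both $\varepsilon$ and $t$ vary and as the underlying complex structure $J_t$ shifts inside the Kuranishi family; but this is precisely what Propositions \ref{prop:linearisationchange} and \ref{prop:fibrewiselinear} are designed to provide, so the remaining work is merely assembling them with the potential estimate. No subtlety arises from the choice of cut-off $\chi$, since on each fibre it contributes only a multiplicative constant in $[0,1]$.
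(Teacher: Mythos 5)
Your proposal is correct and follows essentially the same route as the paper: the mean value theorem expansion $S(\omega_{\varepsilon}|_{\scX_t}) = S(\alpha_{\varepsilon}) + L_r(\chi(|t|/\varepsilon)(\phi_t-\phi_{\varepsilon}))$, combined with Proposition \ref{prop:potentialbound}, the linearisation-change estimate of Proposition \ref{prop:linearisationchange}, and the uniform boundedness of $L_{\alpha_{\varepsilon}}$ from Proposition \ref{prop:fibrewiselinear}. The only difference is your explicit case-splitting into the regions $|t|\leq \varepsilon/2$, the transition annulus, and $|t|\geq 3\varepsilon/4$, which the paper subsumes into one uniform application of the same argument; this is cosmetic rather than a genuinely different method.
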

\begin{proof}
From Proposition \ref{prop:potentialbound}, we know that $\phi_{\varepsilon} - \phi_t$ is $O(\varepsilon)$. Since we are interested in the scalar curvature on the fibre only, the term $\chi (\frac{|t|}{\varepsilon})$ is a constant. We can then apply Proposition \ref{prop:linearisationchange} to estimate $S(\omega_{\varepsilon}|_{X_t}).$ As remarked above, the scalar curvature is given by
$$
S(\omega_{\varepsilon}) = S(\alpha_{\varepsilon}) + L_{r}  \left( \chi \left( \frac{|t|}{\varepsilon} \right) (\phi_t - \phi_{\varepsilon})  \right) ,
$$
where $L_r$ is the linearisation of the scalar curvature operator at the metric
$$
\alpha_{\varepsilon} + r \cdot \ddb \left( \chi \left( \frac{|t|}{\varepsilon} \right) (\phi_t - \phi_{\varepsilon})  \right),
$$ 
for some $r \in [0,1]$. From Proposition \ref{prop:linearisationchange}, we therefore get that
\begin{align*}
&\| S(\omega_{\varepsilon}|_{X_t}) - \hat S \| \\
\leq & \|S(\alpha_{\varepsilon}) - \hat S \| + \| L_{r}  \left( \chi \left( \frac{|t|}{\varepsilon} \right) (\phi_t - \phi_{\varepsilon})  \right) \| \\
\leq & \|S(\alpha_{\varepsilon}) - \hat S \| + \| L_{0}  \left( \chi \left( \frac{|t|}{\varepsilon} \right) (\phi_t - \phi_{\varepsilon})  \right) \| +   \| (L_{r} -L_0) \left( \chi \left( \frac{|t|}{\varepsilon} \right) (\phi_t - \phi_{\varepsilon})  \right) \|  \\
\leq &  \|S(\alpha_{\varepsilon}) - \hat S \| + \| L_{0}  \left( \chi \left( \frac{|t|}{\varepsilon} \right) (\phi_t - \phi_{\varepsilon})  \right) \|  + C \| \phi_t - \phi_{\varepsilon} \|^2 .
\end{align*}
Since $L_0 = L_{\alpha_{\varepsilon}}$ is a bounded operator independently of $\varepsilon$, the above is an $O(\varepsilon)$ term, as required.
\end{proof}

\section{Constructing the extremal metrics}
\label{sec:construction}

In this section, we prove the main result of the paper, Theorem \ref{thm:mainepsilon}. We first relate the parameter $\varepsilon$ for the relative K\"ahler metrics on the test configuration $\scX$ to the parameter $k$ describing the polarisation. We then produce K\"ahler metrics on $\scX$, that moreover are approximately extremal. We continue to improve these approximate solutions, before finally perturbing to a genuine solution, when the approximate solutions are sufficiently good.

\subsection{K\"ahler metrics on the test configuration}
We now relate the parameters $k$ and $\varepsilon$. We define a closed $2$-form 
$$
\Omega_{\varepsilon} = \omega_{\varepsilon} + \lambda \varepsilon^{- \delta} \pi^* \omega_{FS}
$$
on $\scX$, where $\delta, \lambda > 0$ are constant parameters and $\omega_{FS}$ is the Fubini--Study metric on $\mathbb{P}^1$. So the relationship between the two parameters is $\varepsilon =(\lambda/k)^{\frac{1}{\delta}}$. A priori, this is just a closed $2$-form, but we will prove that when $\varepsilon$ is sufficiently small, it is K\"ahler, provided $\delta$ is sufficiently large so that it compensates for the horizontal contribution of $\omega_{\varepsilon}$.

Now,
\begin{align*}
\omega_{\varepsilon} =& \chi \left( \frac{|t|}{\varepsilon} \right) \alpha_t + (1-\chi \left( \frac{|t|}{\varepsilon} \right) )  \alpha_{\varepsilon} \\
& + i \partial \chi \left( \frac{|t|}{\varepsilon} \right) \wedge \bar \partial ( \phi_t - \phi_{\varepsilon} )  \\
&+ i \partial ( \phi_{\varepsilon} - \phi_t ) \wedge \bar \partial \chi \left( \frac{|t|}{\varepsilon} \right) \\
& + (\phi_t - \phi_{\varepsilon}) \ddb \chi \left( \frac{|t|}{\varepsilon} \right).
\end{align*}
The first line is a convex combination of relative K\"ahler forms, hence it is relatively K\"ahler. Since $J_{\varepsilon} - J_0 = O(\varepsilon)$, we therefore have that the horizontal contribution is $O(\varepsilon)$. We want to understand the contribution from the remaining terms, first to able to construct K\"ahler metrics, and then to understand the scalar curvature.

We first consider the term
$$
\partial \chi \left( \frac{|t|}{\varepsilon} \right) = \chi' \left( \frac{|t|}{\varepsilon} \right) \cdot \partial (|t|)/\varepsilon.
$$
Note that this is zero if $|t| < \frac{\varepsilon}{2},$ since $\chi'$ vanishes then. Note also that $\chi$ is uniformly bounded in $C^1$, as it is a fixed function. Finally, 
$$
\partial(|t|^2) = \bar{t} \partial (t),
$$
and so
$$
\partial (|t|) = \frac{1}{2|t|}  \partial( |t|^2) = \frac{\bar{t}}{2|t|} \partial (t),
$$
which is $O(1)$ for $t \in (\frac{\varepsilon}{2},\varepsilon)$. The term $\partial \chi \left( \frac{|t|}{\varepsilon} \right)$ is therefore an $O(\varepsilon^{-1})$-term. Similarly for $\bar{\partial} \chi (\frac{|t|}{\varepsilon}).$

Finally, we consider $ \ddb \chi \left( \frac{|t|}{\varepsilon} \right)$. Continuing from the calculations above, we have
\begin{align*}
 \ddb \chi \left( \frac{|t|}{\varepsilon} \right) =& - i\bar{\partial} \left( \chi' \left( \frac{|t|}{\varepsilon} \right) \cdot  \frac{\bar t}{2 \varepsilon |t|} \partial (t) \right) \\
=&- i\chi'' \left( \frac{|t|}{\varepsilon} \right) \cdot \frac{\bar{\partial} (|t|)}{\varepsilon} \wedge   \frac{\bar t}{2 \varepsilon |t|} \partial (t) - i\chi' \left( \frac{|t|}{\varepsilon} \right) \bar{\partial} \left(   \frac{\bar t}{2 \varepsilon |t|} \partial (t) \right)  \\
=& \frac{1}{4 \varepsilon^2 } \chi'' \left( \frac{|t|}{\varepsilon} \right) i \partial (t) \wedge  \bar{\partial} (\bar{t}) +  \frac{1}{4 \varepsilon |t|} \chi' \left( \frac{|t|}{\varepsilon} \right) i \partial (t) \wedge  \bar{\partial} (\bar{t})  
\\
=& \frac{1}{8} \left( \frac{1}{\varepsilon^2} \chi'' \left( \frac{|t|}{\varepsilon} \right) + \frac{1}{\varepsilon |t|} \chi' \left( \frac{|t|}{\varepsilon} \right) \right) i d t \wedge d (\bar{t} ),
\end{align*}
which is an $O(\varepsilon^{-2})$-term.

The above calculations allow us to prove the following.
\begin{lemma}
For any $\delta \geq 1$, there exists $\varepsilon_0, \lambda>0$ such that for all $\varepsilon \in (0,\varepsilon_0)$, $\Omega_{\varepsilon}$ is K\"ahler on $\scX$. When $\delta >1$, the same conclusion holds for any $\lambda$, with $\varepsilon_0$ depending on $\lambda$.
\end{lemma}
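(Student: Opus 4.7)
The plan is to verify positivity of $\Omega_\varepsilon$ block-by-block in local coordinates $(z,t)$, where $z$ are fibre coordinates and $t$ is the base coordinate, using the estimates already assembled above. Writing $\Omega_\varepsilon$ as a Hermitian matrix
$$
\Omega_\varepsilon \leftrightarrow \begin{pmatrix} V_\varepsilon & M_\varepsilon \\ M_\varepsilon^* & H_\varepsilon \end{pmatrix},
$$
positivity of $\Omega_\varepsilon$ is equivalent to $V_\varepsilon > 0$ together with the Schur complement condition $H_\varepsilon - M_\varepsilon^* V_\varepsilon^{-1} M_\varepsilon > 0$. The vertical block $V_\varepsilon$ is positive and uniformly bounded below as $\varepsilon \to 0$, because the first line of the decomposition of $\omega_\varepsilon$ in the preceding calculation is a fibrewise convex combination of relatively K\"ahler forms $\alpha_t$ and $\alpha_\varepsilon$, and those forms converge to the K\"ahler form on the cscK central fibre as $\varepsilon \to 0$; the remaining lines only contribute to $V_\varepsilon$ via the $d\bar z$-components of $\bar\partial(\phi_t - \phi_\varepsilon)$, which by Proposition \ref{prop:potentialbound} are $O(\varepsilon)$ and hence harmless.

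Next I would estimate the mixed and horizontal blocks using the three estimates already proved above: $\partial\chi(|t|/\varepsilon) = O(\varepsilon^{-1})$, $\ddb\chi(|t|/\varepsilon) = O(\varepsilon^{-2})$, and $\|\phi_t-\phi_\varepsilon\|_{C^{4,\alpha}} = O(\varepsilon)$. The mixed block $M_\varepsilon$ gets its worst contribution from $i\partial\chi(|t|/\varepsilon) \wedge \bar\partial(\phi_t-\phi_\varepsilon)$ and its conjugate, giving $M_\varepsilon = O(\varepsilon^{-1}) \cdot O(\varepsilon) = O(1)$; since $V_\varepsilon^{-1} = O(1)$, we conclude $M_\varepsilon^* V_\varepsilon^{-1} M_\varepsilon = O(1)$. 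The horizontal block has the schematic form
$$
H_\varepsilon = \lambda \varepsilon^{-\delta}\, (\pi^*\omega_{FS})_{t\bar t} \,+\, O(\varepsilon) \,+\, O(1) \,+\, (\phi_t - \phi_\varepsilon)\, \ddb\chi(|t|/\varepsilon)_{t\bar t},
$$
where the three error terms come respectively from the horizontal $d t \wedge d\bar t$ component of $\chi\alpha_t + (1-\chi)\alpha_\varepsilon$ (which is $O(\varepsilon)$ because $J_\varepsilon - J_0 = O(\varepsilon)$), the horizontal parts of the mixed-looking terms $i\partial\chi \wedge \bar\partial(\phi_t-\phi_\varepsilon)$, and the explicit $\ddb\chi$ term computed above. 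The last is the worst: it is $O(\varepsilon) \cdot O(\varepsilon^{-2}) = O(\varepsilon^{-1})$, and crucially this is the bottleneck.

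Combining, the Schur complement is
$$
H_\varepsilon - M_\varepsilon^* V_\varepsilon^{-1} M_\varepsilon = \lambda \varepsilon^{-\delta}\,(\pi^*\omega_{FS})_{t\bar t} + O(\varepsilon^{-1}),
$$
with the leading positive term of size $\varepsilon^{-\delta}$ (the Fubini--Study form being positive on $\mathbb{P}^1$). If $\delta > 1$, then $\varepsilon^{-\delta}$ dominates $\varepsilon^{-1}$ for any fixed $\lambda>0$ once $\varepsilon$ is small enough, yielding positivity. If $\delta = 1$, the positive and negative contributions are of the same order $\varepsilon^{-1}$, so we instead choose $\lambda$ sufficiently large (depending only on the uniform constants controlling the $\ddb\chi$ and mixed terms, which by Proposition \ref{prop:potentialbound} are independent of $\varepsilon$) to absorb the $O(\varepsilon^{-1})$ error. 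On the region $|t|\notin [\varepsilon/2, 3\varepsilon/4]$ all $\chi$-derivatives vanish and positivity is immediate, so one only needs the argument on the annular gluing region. The only real obstacle is keeping track of the orders in $\varepsilon$; once that bookkeeping is done, the Schur complement gives the result.
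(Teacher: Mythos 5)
Your proposal is correct and follows essentially the same route as the paper: vertical positivity comes from the convex combination of relatively K\"ahler forms, the $\chi$-derivative estimates together with Proposition \ref{prop:potentialbound} show the horizontal/mixed contribution of $\omega_{\varepsilon}$ is only $O(\varepsilon^{-1})$, and this is then dominated by $\lambda\varepsilon^{-\delta}\pi^*\omega_{FS}$ (any $\lambda$ for $\delta>1$, $\lambda$ large for $\delta=1$); the Schur-complement formulation is just a more explicit bookkeeping of the same estimate. (Only cosmetic quibbles: the $\chi$-derivative terms contribute nothing, rather than $O(\varepsilon)$, to the vertical block, and on $|t|\le\varepsilon/2$ positivity is not literally immediate but follows from the same domination of the $O(1)$ mixed terms by the base form.)
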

\begin{proof}
The $\omega_{\varepsilon}$ are relatively K\"ahler, so all that needs to be checked is the horizontal component with respect to the fibration structure 
$$
\pi : \scX \to \mathbb{P}^1.
$$
It follows from Proposition \ref{prop:potentialbound} that the horizontal component of $\omega_{\varepsilon}$ is $O(\varepsilon^{-1})$. More precisely, it is bounded below by a constant multiple of $- \varepsilon^{-1} \pi^* \omega_{FS}$ when $\varepsilon$ is sufficiently small. Thus if $\delta = 1$, we can pick $\lambda >0$ sufficiently large to make $\Omega_{\varepsilon}$ K\"ahler. If $\delta > 1$, we can do so for any $\lambda.$
\end{proof}

\subsection{Expansion of the scalar curvature}

We now want to understand the scalar curvature of the above metric.
\begin{proposition}
\label{prop:firstapproxerror}
Let $\hat S$ denote the average scalar curvature of the fibres of $\scX \to \mathbb{P}^1$. Then 
$$
S(\Omega_{\varepsilon}) - \hat S = O(\varepsilon^{\tau})
$$
for some $\tau >0$, which can be taken to be $1$ if $\delta \geq 2$. Moreover, the horizontal component of $S(\omega_{\varepsilon})$ satisfies 
$$
(S(\Omega_{\varepsilon}) - \hat S )_{\scH} = \frac{\varepsilon^{\delta}}{\lambda} S(\omega_{FS}) + O(\varepsilon^{\delta+ \tau'}),
$$
for some $\tau'>0$, which again can be taken to be $1$ if $\delta \geq 2$.
\end{proposition}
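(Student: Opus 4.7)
The plan is to combine the fibrewise scalar curvature estimate from the preceding proposition with the standard adiabatic expansion of the scalar curvature for fibration metrics of the form $\omega_F + k\pi^*\omega_B$ in the limit $k\to\infty$. Setting $k = \lambda\varepsilon^{-\delta}$, so that $k^{-1}=\varepsilon^\delta/\lambda$, the form $\Omega_\varepsilon$ is of this shape with $\omega_F = \omega_\varepsilon$ and $\omega_B = \omega_{FS}$, and $\varepsilon\to 0$ is the adiabatic limit. For a clean relatively K\"ahler $\omega_F$ the standard expansion (as in \cite{fine04,dervansektnan19a,dervansektnan20}) takes the schematic form
\[
S(\omega_F + k\pi^*\omega_B) \;=\; S(\omega_F|_{\mathrm{fibre}}) \;+\; k^{-1}\bigl(S(\omega_B) + R(\omega_F)\bigr) \;+\; O(k^{-2}),
\]
where $R(\omega_F)$ is a horizontal curvature correction involving the mean curvature of the fibration. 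Inserting the fibrewise estimate $S(\omega_\varepsilon|_{\scX_t})-\hat S = O(\varepsilon)$ together with $k^{-1}=\varepsilon^\delta/\lambda$, and splitting into the fibre-mean-zero and fibre-averaged parts, would give both claims directly with $\tau = \tau' = 1$, \emph{provided} $\Omega_\varepsilon$ were of the clean adiabatic form.

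The principal obstacle is that $\omega_\varepsilon$ is precisely \emph{not} of that form. As shown in the calculations just before the proposition, the horizontal part of $\omega_\varepsilon$ is only $O(\varepsilon)$ away from the transition annulus $|t|\in[\varepsilon/2,3\varepsilon/4]$, but inside that annulus the cut-off derivatives $\partial\chi(|t|/\varepsilon)$ and $\ddb\chi(|t|/\varepsilon)$ blow up like $\varepsilon^{-1}$ and $\varepsilon^{-2}$ respectively, producing, after combining with the bound $\|\phi_t-\phi_\varepsilon\|_{C^{4,\alpha}} = O(\varepsilon)$ from Proposition \ref{prop:potentialbound}, a horizontal contribution of size $O(\varepsilon^{-1})\pi^*\omega_{FS}$; the dominant piece is the zeroth-order term $(\phi_t-\phi_\varepsilon)\,\ddb\chi(|t|/\varepsilon)$. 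Relative to the dominant horizontal term $k\pi^*\omega_{FS}=\lambda\varepsilon^{-\delta}\pi^*\omega_{FS}$, this correction has ratio $\varepsilon^{\delta-1}/\lambda$, which is small when $\delta>1$ and $O(\varepsilon)$ once $\delta\geq 2$.

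To convert this comparison into a scalar curvature estimate I would decompose $\omega_\varepsilon = \omega_\varepsilon^{\scV} + \omega_\varepsilon^{\scH}$ into its vertical and horizontal parts, and linearly interpolate from the clean adiabatic model $\omega_\varepsilon^{\scV} + k\pi^*\omega_{FS}$ to $\Omega_\varepsilon$, applying a mean value argument in exactly the spirit of the proof of Proposition \ref{prop:linearisationchange}: the linearisation of the scalar curvature operator is uniformly bounded along the interpolation, so the change in $S$ is controlled by the $C^{4,\alpha}$-norm of the horizontal perturbation, which is $O(\varepsilon^{\delta-1})$ after rescaling against the dominant $k$-sized horizontal direction. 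Combined with the $O(\varepsilon)$ fibrewise error and the $O(k^{-2})=O(\varepsilon^{2\delta})$ tail of the clean adiabatic expansion, this yields $S(\Omega_\varepsilon)-\hat S = O(\varepsilon^\tau)$ with $\tau=1$ as soon as $\delta\geq 2$, and some positive $\tau$ in general. Taking the fibrewise average of the resulting identity then isolates the leading horizontal contribution $k^{-1}S(\omega_{FS})=(\varepsilon^\delta/\lambda)\,S(\omega_{FS})$, with the advertised $O(\varepsilon^{\delta+\tau'})$ remainder with the same exponent.
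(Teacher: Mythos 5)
Your overall architecture (fibrewise estimate plus adiabatic expansion plus a perturbation argument) is in the right spirit, and you correctly identify the dominant horizontal danger term $(\phi_t-\phi_\varepsilon)\,\ddb\chi(|t|/\varepsilon) = O(\varepsilon^{-1})$ and why it is harmless relative to $\lambda\varepsilon^{-\delta}\pi^*\omega_{FS}$ when $\delta>1$. However, the proposal has two genuine gaps. First, the ``clean adiabatic model'' you interpolate from, $\omega_\varepsilon^{\scV}+\lambda\varepsilon^{-\delta}\pi^*\omega_{FS}$, is not a K\"ahler form: the vertical component of $\omega_\varepsilon$ is not closed by itself, and your decomposition silently discards the mixed terms $i\,\partial\chi\wedge\bar\partial(\phi_t-\phi_\varepsilon)$, which are $O(1)$, not lower order. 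So the quoted standard expansion of Fine type cannot be applied to that model, nor can you linearise the scalar curvature operator along an interpolation whose members are not K\"ahler; and even where it could be applied, its $O(k^{-2})$ remainder has constants depending on derivatives of the fibration form, which here blow up like $\varepsilon^{-1},\varepsilon^{-2}$ on the gluing annulus, so uniformity in $\varepsilon$ is exactly what must be proved, not assumed.

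Second, and more importantly for the second claim of the proposition: in the generic adiabatic expansion the coefficient of $k^{-1}$ in the horizontal direction is \emph{not} just $S(\omega_B)$; it contains the contraction $\Lambda_{\omega_{FS}}(\rho_{\varepsilon,\scH})$ of the horizontal component of the fibrewise Ricci form and the cross term $\Delta_{\scV}(\Lambda_{\omega_{FS}}(\omega_\varepsilon))$ (this is precisely why, in the cscK-fibration papers you cite, a nontrivial base equation appears). Your final step, ``taking the fibrewise average isolates $k^{-1}S(\omega_{FS})$ with an $O(\varepsilon^{\delta+\tau'})$ remainder,'' therefore needs an argument that these extra order-$\varepsilon^{\delta}$ horizontal terms actually decay in $\varepsilon$. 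This is the heart of the paper's proof: outside the annulus the family is holomorphically and metrically constant in the horizontal direction, so $\rho_{\varepsilon,\scH}$ is supported in the annulus, and there the fibre metrics are $O(\varepsilon)$-perturbations of the fixed cscK metric $\omega$, so $\rho_\varepsilon$ is horizontally constant to leading order and $\Lambda_{\omega_{FS}}(\rho_{\varepsilon,\scH})$ decays; similarly $\frac{\varepsilon^\delta}{\lambda}\Delta_{\scV}(\Lambda_{\omega_{FS}}(\omega_\varepsilon))$ is only $O(\varepsilon^{\delta-1})$ and is purely vertical, which is why the paper needs $\delta>1$ (and $\delta\geq 2$ for $\tau=\tau'=1$). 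The paper obtains all of this by redoing the curvature computation directly, splitting $\Ric(\Omega_\varepsilon)$ via the sequence $0\to\scV\to T\scX\to\pi^*T\mathbb{P}^1\to 0$ and tracking the $\varepsilon$-size of each term, rather than perturbing off a black-box expansion. Without the support-and-$O(\varepsilon)$-perturbation argument for $\rho_{\varepsilon,\scH}$ and the cross term, your remainder estimates are unjustified.
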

\begin{proof}
We follow the strategy of \cite[Lemma 3.3]{fine04} to compute the expansion of the scalar curvature. We begin with the Ricci curvature. The Ricci curvature of $\Omega_{\varepsilon}$ is the sum of the curvatures of the induced metrics on $\Lambda^{n} \scV$ and $\scH$, where $\scV= \ker \pi_*$ is the vertical subbundle of the tangent bundle and $\scH \cong \pi^* T \mathbb{P}^1$ is its orthogonal complement with respect to $\Omega_{\varepsilon}$, due to the short exact sequence
$$
0 \to \scV \to T \scX \to \pi^* T \mathbb{P}^1 \to 0 .
$$

We will denote by $\rho_{\varepsilon}$ the curvature on $\Lambda^n \scV$. Note that the vertical component $\rho_{\varepsilon, \scV}$ of this is nothing but the Ricci curvature of the restriction of $ \omega_{\varepsilon} $ to the fibres, so that $\Lambda_{\omega_{\varepsilon}} ( \rho_{\varepsilon}) = S(\omega_{\varepsilon})$ is the fibrewise scalar curvature. We will deal with its horizontal component $\rho_{\varepsilon, \scH}$ at the end of the proof.

For the curvature $F_{\scH} $ on $\scH$, we also have another metric, namely $\pi^* \omega_{FS}$. The curvature of this metric is simply the pullback of the curvature on $B$, and the two curvatures differ by an exact term. We therefore have that the curvatures compare as
\begin{align*}
iF_{\scH} - \Ric (\omega_{FS}) =& \ddb \log \left( \frac{\omega_{\varepsilon} + \lambda \varepsilon^{-\delta} \omega_{FS}}{\omega_{FS}}\right) \\
=& \ddb \log \left( 1 + \frac{\varepsilon^{\delta}}{\lambda} \Lambda_{\omega_{FS}} (\omega_{\varepsilon}) \right) \\
=&  \frac{\varepsilon^{\delta}}{\lambda} \ddb \Lambda_{\omega_{FS}} (\omega_{\varepsilon})  + O(\varepsilon^{2(\delta-1)}).
\end{align*}
Note that $ \frac{\varepsilon^{\delta}}{\lambda} \ddb \Lambda_{\omega_{FS}} (\omega_{\varepsilon})  $ is an $O(\varepsilon^{\delta -1 })$ term, since $\omega_{\varepsilon}$ is $O(\varepsilon^{-1})$. This is why the error term is $ O(\varepsilon^{2(\delta-1)})$ above.

The upshot is that we have the expansion
$$
\Ric (\Omega_{\varepsilon}) = \rho_{\varepsilon} + \Ric(\omega_{FS}) +  \frac{\varepsilon^{\delta}}{\lambda} \ddb \Lambda_{\omega_{FS}} (\omega_{\varepsilon}) + O(\varepsilon^{2(\delta-1)}) .
$$

Next, we contract to obtain the scalar curvature. We have
$$
\Lambda_{\Omega_{\varepsilon}} = \Lambda_{\scV} + \frac{\varepsilon^{\delta}}{\lambda} \Lambda_{\omega_{FS}} + O(\varepsilon^{\delta + 1}),
$$
since the restriction of the metric to $\scV$ is the restriction of $\omega_{\varepsilon}$, and the metric on $\scH$ is $\lambda \varepsilon^{-\delta} \omega_{FS}$ to leading order, the subleading order terms coming from the horizontal component of $\omega_{\varepsilon}$.  Therefore,
\begin{align*}
S(\Omega_{\varepsilon}) =& S(\omega_{\varepsilon}) + \varepsilon^{\delta} \Lambda_{\omega_{FS}} (\rho_{\varepsilon, \scH}) + \frac{\varepsilon^{\delta}}{\lambda}  S(\omega_{FS}) + \frac{\varepsilon^{\delta}}{\lambda} \Delta_{\scV} (\Lambda_{\omega_{FS}} (\omega_{\varepsilon}) ) + O(\varepsilon^{\textnormal{min} \{ 3\delta-2, \delta + 1 \} }).
\end{align*}
Note that $\frac{\varepsilon^{\delta}}{\lambda} \Delta_{\scV} (\Lambda_{\omega_{FS}} (\omega_{\varepsilon}) ) $ is a vertical $O(\varepsilon^{\delta-1})$ term. Since $\delta > 1$, this is therefore a decaying vertical term, and is $\varepsilon$ to an integer power if $\delta > 1$ is an integer. 

The term $\rho_{\varepsilon, \scH}$ is the horizontal component of the curvature of $\Lambda^n \scV$ induced from $\omega_{\varepsilon}$. We claim that $\Lambda_{\omega_{FS}} (\rho_{\varepsilon, \scH})$ decays with $\varepsilon$, which gives the required expansion of the scalar curvature. Now, $\rho_{\varepsilon}$ is determined by restricting $\omega_{\varepsilon}$ to a fibre $\scX_t$ and then taking the Ricci curvature. When $t > \varepsilon$, $\omega_{\varepsilon}$ restricted to $\scX_t$ is simply $\alpha_{\varepsilon}$, which is constant in the horizontal direction, and so the horizontal contribution is entirely contained in the ball of radius $\varepsilon$ in the base direction. In this region, $\omega_{\varepsilon}$ is a convex combination of $\alpha_{t}$ and $\alpha_{\varepsilon}$, both of which are $O(\varepsilon)$ perturbations of $\omega$. Thus, to leading order in $\varepsilon$, $\rho_{\varepsilon}$ is simply the Ricci curvature of the central fibre, and so does not vary in the horizontal direction. Therefore, the term $\Lambda_{\omega_{FS}} (\rho_{\varepsilon, \scH})$ decays with $\varepsilon$, and so the  required expansion holds.
\end{proof}

\begin{remark}
The above explains why we picked the Fubini--Study metric on $\mathbb{P}^1$. This means that the leading order term in the horizontal direction is a constant.
\end{remark}

\subsection{Improving the approximate solution}

Proposition \ref{prop:firstapproxerror} shows that $\Omega_{\varepsilon}$ is approximately extremal. We now want to show that we can perturb to a genuine solution of the extremal equation, for sufficiently small $\varepsilon$. The next step is to improve the approximate solution $\Omega_{\varepsilon}$.

We first describe the holomorphy potential for the $S^1$-action on $\scX$ with respect to our metrics. Recall that $\chi$ was the cut-off function used in the definition of the relative K\"ahler $\omega_{\varepsilon}$.
\begin{lemma}
\label{lem:holpots} Let $h_{\varepsilon}$ be the average $0$ holomorphy potential of the $S^1$-action on $\scX$ with respect to $\Omega_{\varepsilon}$. Then, after potentially changing the potentials $\phi_t$ by a function pulled back from $\mathbb{C}$, we have that 
$$
h_{\varepsilon} = \chi (t/\varepsilon) \cdot  h_0  + \lambda \varepsilon^{-\delta} h_{FS},
$$
where 
\begin{itemize} 
\item $h_0$ is the average $0$ potential with respect to the central fibre, thought of as a vertical function on $\scX$;
\item $h_{FS}$ is the  average $0$  potential of the corresponding action on $\mathbb{P}^1$ with respect to $\omega_{FS}$, pulled back to $\scX$.
\end{itemize}
\end{lemma}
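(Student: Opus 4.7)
My approach is to verify the formula for $h_\varepsilon$ by directly computing $\iota_\xi \Omega_\varepsilon$, where $\xi$ is the real vector field on $\scX$ generating the $S^1$-action, and checking that this equals $dh_\varepsilon$ for the claimed $h_\varepsilon$ up to the freedom permitted in the statement. Since the $S^1$-action on $\scX$ projects under $\pi$ to the standard rotation on $\mathbb{P}^1$, which has $h_{FS}$ as its average-zero Hamiltonian, the decomposition $\Omega_\varepsilon = \omega_\varepsilon + \lambda \varepsilon^{-\delta} \pi^* \omega_{FS}$ immediately yields the term $\lambda \varepsilon^{-\delta} h_{FS}$ from the horizontal summand:
$$
\iota_\xi \bigl( \lambda \varepsilon^{-\delta} \pi^* \omega_{FS} \bigr) = \lambda \varepsilon^{-\delta} \, d( \pi^* h_{FS} ).
$$

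For the relative summand $\omega_\varepsilon$, I would proceed by treating the two trivializations used in its construction. In the Kuranishi trivialization $\Delta \times M$ on the inner region $|t| \le \varepsilon/2$ (where $\chi \equiv 1$), the form $\omega_\varepsilon$ agrees with the Kuranishi K\"ahler form $\omega$ pulled back from $M$, and the $S^1$-generator splits as $\xi_M + \xi_\Delta$. The $\xi_\Delta$-contribution vanishes since $\omega$ has no horizontal component, while $\xi_M$ contributes $dh_0$ by the defining property of $h_0$ as the average-zero Hamiltonian on the central fibre with respect to $\omega$. In the product trivialization $X \times ( \mathbb{P}^1 \setminus \{0\} )$ on the outer region $|t| \ge 3\varepsilon/4$ (where $\chi \equiv 0$), $\omega_\varepsilon = \alpha_\varepsilon$ is pulled back from $X$ while the $S^1$-action is trivial on the $X$-factor, so the contribution vanishes. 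Thus on both outer regimes the vertical contribution matches $\chi \cdot h_0$.

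The main obstacle is the intermediate annulus $\varepsilon/2 < |t| < 3\varepsilon/4$, where $\omega_\varepsilon = \omega_0 + \ddb F$ with $F = \chi \phi_t + (1-\chi) \phi_\varepsilon$. A direct computation of $\iota_\xi \omega_\varepsilon$ using $S^1$-invariance of $\omega_0$ and of the $\phi_t$ shows that $\iota_\xi \omega_\varepsilon - d(\chi h_0)$ is an exact horizontal one-form $d\psi$ for some $\psi = \psi(t)$ pulled back from $\mathbb{C}$. This discrepancy is removed by the freedom in the statement: replacing each $\phi_t$ by $\phi_t + c(t)$ for a suitable $c : \mathbb{C} \to \mathbb{R}$ leaves $\alpha_t = \omega_0 + \ddb \phi_t$ unchanged on $X$, but modifies $F$ by $\chi c(t) + (1-\chi) c(\varepsilon)$, thereby correcting $\omega_\varepsilon$ by a horizontal exact form that can be chosen to cancel $d\psi$. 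After this adjustment and a final constant shift to enforce average zero, the formula of the lemma follows. The subtlety is to check that a single consistent choice of $c$ works across the annulus and is compatible with the Kuranishi model at $t = 0$; this follows from the $S^1$-equivariance of the Kuranishi slice, which allows the $\phi_t$ to be chosen $S^1$-invariantly to begin with.
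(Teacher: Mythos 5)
Your overall strategy is the same as the paper's: split $\Omega_\varepsilon$ into the relative part and the pulled-back Fubini--Study part (linearity gives the $\lambda\varepsilon^{-\delta}h_{FS}$ term), observe that the contribution of $\omega_\varepsilon$ is $dh_0$ on the inner region and $0$ on the outer region, and absorb the remaining discrepancy on the gluing annulus into the freedom of changing $\phi_t$ by functions pulled back from the base. The problem is that the one step carrying all the content of the lemma is exactly the one you assert rather than prove: the claim that on the annulus $\iota_\xi\omega_\varepsilon - d(\chi h_0)$ is an exact form $d\psi$ with $\psi$ pulled back from $\mathbb{C}$. $S^1$-invariance of $\omega$ and of the (normalised) $\phi_t$ only gives you that $\iota_\xi\omega_\varepsilon$ is exact, say $=dH$; it does not tell you that $H-\chi h_0$ is a function of $t$ alone. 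A priori $H$ contains genuinely fibre-dependent pieces supported in the annulus, coming from the cross terms $i\partial\chi\wedge\bar\partial(\phi_t-\phi_\varepsilon)$, $i\partial(\phi_\varepsilon-\phi_t)\wedge\bar\partial\chi$ and $(\phi_t-\phi_\varepsilon)\ddb\chi$, as well as from the $t$-dependence of $\phi_t$ inside $\chi\,\ddb\phi_t$; if any such vertical component survived, no modification of $\phi_t$ by functions pulled back from $\mathbb{C}$ (which only changes $\omega_\varepsilon$, and hence $h_\varepsilon$, by base terms) could repair it.

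What closes this gap in the paper is a genuine argument, not a routine computation: after contracting the explicit expansion of $\omega_\varepsilon$ with $\nu$, one is left with terms of the form $\partial\chi\cdot\bigl(i\iota_\nu(F^{-1})^*\bar\partial\phi_t-h\bigr)$ and $\bar\partial\chi\cdot\bigl(i\iota_\nu(F^{-1})^*\partial\phi_t+h\bigr)$, and the paper shows that the functions $h+i\iota_\nu(F^{-1})^*\partial\phi_t$ and $h-i\iota_\nu(F^{-1})^*\bar\partial\phi_t$ are closed and, by restricting to the compact level sets $\{|t|=r\}$, constant, equal to some $c_{|t|}$. This is precisely the identification of the radial variation of the potentials $\phi_t$ with the transported holomorphy potential $h_0$ up to fibrewise constants, and it is only these constants $c_{|t|}$ that the allowed change of $\phi_t$ (by functions of $|t|$) removes. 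Your write-up needs this identification spelled out — without it, the assertion that the discrepancy is horizontal and the claim that a single choice of $c(t)$ works across the annulus are unsubstantiated. The inner/outer regions and the equivariance remark at the end are fine, but as it stands the proposal presupposes the lemma's key point rather than establishing it.
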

\begin{proof}
Note that $\Omega_{\varepsilon}$ is an $S^1$-invariant K\"ahler form on $\scX$, and as such we know the existence of $h_{\varepsilon}$. Note that because of the linearity of the holomorphy potential of a fixed vector field with respect to the symplectic form, we therefore have the existence of a potential with respect to the \emph{relative} K\"ahler form $\omega_{\varepsilon}$. Note that the notion of a hamiltonian makes sense even if a $2$-form is not symplectic -- it is just not guaranteed to exist. Now, if $\eta_{\varepsilon}$ denotes this potential with respect to $\omega_{\varepsilon}$, it suffices to show that $\eta_{\varepsilon} = \chi h_0 $, again by the linearity of the potential ($\lambda \varepsilon^{-\delta} h_{FS}$ is the potential with respect to $ \lambda \varepsilon^{- \delta} \pi^* \omega_{FS}$).

We now prove that $\eta_{\varepsilon} = \chi (t/\varepsilon) \cdot h_0$. Since $h_0$ is a holomorphy potential for the generator $\nu$ of the $S^1$-action on the central fibre, we have that
$$
d_{\scX_0} (h_0 ) = \iota_{\nu} \omega.
$$
Recall the description of $\omega_{\varepsilon}$. We have a $\mathbb{C}^*$ equivariant biholomorphism $F :  (M, J) \times \mathbb{C}^* \to \scX_{|\pi^{-1} \mathbb{C}^*} $, giving diffeomorphisms $f_t = F_ {|\scX_t}: M \to M$ such that $f_t^* J_t = J$ and K\"ahler metrics $ \alpha_t = f^*_t \omega = \omega + \ddb \phi_t$, for some background K\"ahler form $\omega$ on $X$. The cut-off function $\chi  (s) : [0, \infty) \to [0,1]$ is $1$ when $s \leq \frac{1}{2}$ and $0$ when $s \geq \frac{3}{4}$, and $\omega_{\varepsilon}$ is defined by
$$
\omega_{\varepsilon} = \omega + \ddb \left( \chi \left( \frac{|t|}{\varepsilon} \right) \phi_t + (1-\chi \left( \frac{|t|}{\varepsilon} \right) ) \phi_{\varepsilon}  \right).
$$
Note that this is exactly equal to $\alpha_{\varepsilon}$ for all fibres $t$ with $t \geq \frac{3}{4} \varepsilon.$ It follows that $(\iota_{\nu} \omega_{\varepsilon})_{ \pi^{-1} ( \mathbb{P}^1 \setminus B_{\frac{3}{4} \varepsilon})} = 0$ and so we have $(h_{\varepsilon})_{|\pi^{-1} ( \mathbb{P}^1 \setminus B_{\frac{3}{4} \varepsilon})} = 0$. In other words, $\eta_{\varepsilon}$ is supported on the preimage of the ball of radius $\frac{3}{4} \varepsilon$.

Using the expansion
\begin{align*}
\omega_{\varepsilon} =& \chi \left( \frac{|t|}{\varepsilon} \right) \alpha_t + (1-\chi \left( \frac{|t|}{\varepsilon} \right) )  \alpha_{\varepsilon} \\
& + i \partial \chi \left( \frac{|t|}{\varepsilon} \right) \wedge \bar \partial ( \phi_t - \phi_{\varepsilon} )  \\
&+ i \partial ( \phi_{\varepsilon} - \phi_t ) \wedge \bar \partial \chi \left( \frac{|t|}{\varepsilon} \right) \\
& + (\phi_t - \phi_{\varepsilon}) \ddb \chi \left( \frac{|t|}{\varepsilon} \right).
\end{align*}
we have that if we pull back to $\scX$, then 
\begin{align*}
(F^{-1})^* ( \omega_{\varepsilon}) =& \chi \left( \frac{|t|}{\varepsilon} \right) \omega + (1-\chi \left( \frac{|t|}{\varepsilon} \right) )  \tau \\
&+ i (F^{-1})^* \partial \chi \left( \frac{|t|}{\varepsilon} \right) \wedge \bar \partial ( \phi_t - \phi_{\varepsilon} )  \\
&+ i (F^{-1})^*  \partial ( \phi_{\varepsilon} - \phi_t ) \wedge \bar \partial \chi \left( \frac{|t|}{\varepsilon} \right) \\
& + (F^{-1})^*  (\phi_t - \phi_{\varepsilon}) \ddb \chi \left( \frac{|t|}{\varepsilon} \right).
\end{align*}
Here $\tau$ is the pullback of $\alpha_{\varepsilon}$, extended to a form on the whole of $(M,J) \times \mathbb{C}^*$. It is not moving in the horizontal directions, so $\iota_{\nu} \tau = 0$. Similarly, $\iota_{\nu} \ddb \chi \left( \frac{|t|}{\varepsilon} \right)  = 0$.

Next, we have that 
\begin{align*}
\iota_{\nu} (F^{-1})^* \ddb (\chi \phi_t) =& - i \partial \chi \wedge \iota_{\nu} (F^{-1})^* \bar \partial \phi_t + i \bar \partial \chi \wedge \iota_{\nu} (F^{-1})^*  \partial \phi_t \\
&+ \chi \iota_{\nu} (F^{-1})^* \ddb (\phi_t) \\
=& - i \partial \chi \wedge \iota_{\nu} (F^{-1})^* \bar \partial \phi_t + i \bar \partial \chi \wedge \iota_{\nu} (F^{-1})^*  \partial \phi_t \\
&+ \chi dh \\
=& - i \partial \chi \wedge \iota_{\nu} (F^{-1})^* \bar \partial \phi_t + i \bar \partial \chi \wedge \iota_{\nu} (F^{-1})^*  \partial \phi_t \\
&+ d (\chi h) - h d \chi \\
=&  - \partial \chi \wedge (i \iota_{\nu} (F^{-1})^* \bar \partial \phi_t  - h ) +  \bar \partial \chi \wedge ( i \iota_{\nu} (F^{-1})^*  \partial \phi_t + h) \\
&+ d (\chi h) - h d \chi.
\end{align*}

From 
$$
dh = \iota_{\nu} (F^{-1})^* \ddb (\phi_t)
$$
we know that 
$$
h -i \iota_{\nu} (F^{-1})^* \bar \partial \phi_t 
$$
and 
$$
h + i \iota_{\nu} (F^{-1})^*  \partial \phi_t 
$$
are closed.  Now, $\nu$ generates an $S^1$ action, and if we restrict to the set $|t|= r$, for $r>0$, then this is compact (real) manifold. In particular, the above two expressions have to be constant. In other words, there exists constants $c_{|t|}$ such that 
$$
h + i \iota_{\nu} (F^{-1})^*  \partial \phi_t  = c_{|t|},
$$
and similarly
$$
h -i \iota_{\nu} (F^{-1})^* \bar \partial \phi_t  = c_{|t|}
$$
since this is just the complex conjugate of the former.

We can then get rid of this constant by changing $\phi_t$ by a constant, fibrewise. This of course changes nothing for the metric on the fibre, but changes the global expression on the total space, by $\ddb$ of a function pulled back from $\mathbb{P}^1$, which moreover is $S^1$ invariant, as it depends on $|t|$ only. The result follows.
\end{proof}

 Proposition \ref{prop:firstapproxerror} implies that the error is all vertical until order $\varepsilon^{\delta}$. We therefore need to correct vertical errors until this order. To make sure we are not introducing horizontal errors interfering with the contribution from $\omega_{FS}$, we need to understand the mapping properties of the linearised operator in more detail.
\begin{proposition}
\label{prop:linearisation}
Suppose $\delta \geq 1$. The linearisation $L_{\varepsilon}$ of the scalar curvature operator at $\Omega_{\varepsilon}$ admits an expansion
$$
L_{\varepsilon} = P_{\varepsilon} + \varepsilon^{2\delta} R_{\varepsilon},
$$
where the image of $P_{\varepsilon}$ is vertical and $R_{\varepsilon}$ is a bounded operator independent of $\varepsilon$. Further, $P_{\varepsilon}$ and the horizontal component of $R_{\varepsilon}$ admit expansions
$$
P_{\varepsilon} = L_{\omega} + O(\varepsilon)
$$
for any $f \in C^{\infty} (\scX)$, where $L_{\omega} = - \mathcal{D}^*_{\omega} \mathcal{D}_{\omega}$ is the linearisation of the scalar curvature at the central fibre, and
$$
R_{\varepsilon} ( \pi^* f) = - \frac{1}{\lambda^2} \cdot \pi^* \left( \mathcal{D}^*_{\omega_{FS}} \mathcal{D}_{\omega_{FS}} (f) \right) + O(\varepsilon)
$$
for $f  \in C^{\infty} ( \mathbb{P}^1)$.

These expansions persist upon perturbations of $\Omega_{\varepsilon}$ by potentials $\phi_{\varepsilon}$ which are 
\begin{itemize}
\item $O(\varepsilon^{\tau})$ for $\tau>0$, if $\phi_{\varepsilon} \in C^{\infty}_0 (X)$;
\item $O(\varepsilon^{\tau - \delta})$ for $\tau > 0$, if $\phi_{\varepsilon} \in \pi^* C^{\infty} ( \mathbb{P}^1).$
\end{itemize}
\end{proposition}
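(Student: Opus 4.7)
The strategy is to expand the linearisation $L_\varepsilon(f) = -\mathcal{D}^*_{\Omega_\varepsilon}\mathcal{D}_{\Omega_\varepsilon}(f) + \tfrac{1}{2}\langle \nabla S(\Omega_\varepsilon), \nabla f\rangle_{\Omega_\varepsilon}$ by decomposing every derivative and every inverse-metric contraction according to the $\Omega_\varepsilon$-orthogonal splitting $T\scX = \scV \oplus \scH$. Since $\delta \geq 1$ and the horizontal component of $\omega_\varepsilon$ is at worst $O(\varepsilon^{-1})$ (by the calculation preceding the K\"ahler lemma of this section), the horizontal part of $\Omega_\varepsilon$ is
\[
\Omega_\varepsilon|_{\scH} = \lambda\varepsilon^{-\delta}\pi^*\omega_{FS} + O(\varepsilon^{-1}),
\]
whose inverse equals $\tfrac{\varepsilon^\delta}{\lambda}(\pi^*\omega_{FS})^{-1} + O(\varepsilon^{2\delta-1})$, while $\Omega_\varepsilon|_{\scV} = \omega_\varepsilon|_{\scV}$ is $O(1)$. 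By Proposition \ref{prop:firstapproxerror}, $\nabla S(\Omega_\varepsilon)$ is $O(\varepsilon^\tau)$, so the gradient piece of $L_\varepsilon$ contributes only error terms, and attention focuses on the Lichnerowicz part.

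Each inverse-metric contraction in $\mathcal{D}^*_{\Omega_\varepsilon}\mathcal{D}_{\Omega_\varepsilon}$ along $\scH$ carries a factor of $\varepsilon^\delta/\lambda$, so grouping terms by the number of horizontal contractions yields
\[
L_\varepsilon = P_\varepsilon + \tfrac{\varepsilon^\delta}{\lambda}\, Q_\varepsilon + \tfrac{\varepsilon^{2\delta}}{\lambda^2}\, R^{\scH}_\varepsilon + \text{higher order},
\]
where $P_\varepsilon$ consists of purely vertical terms, $Q_\varepsilon$ has exactly one horizontal contraction paired with vertical derivatives, and $R^{\scH}_\varepsilon$ has all four derivatives horizontal. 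The images of $P_\varepsilon$ and $Q_\varepsilon$ are vertical by construction, so absorbing $\tfrac{\varepsilon^\delta}{\lambda} Q_\varepsilon$ into a vertical remainder yields the claimed split $L_\varepsilon = P_\varepsilon + \varepsilon^{2\delta} R_\varepsilon$ with $R_\varepsilon$ uniformly bounded in $\varepsilon$. Restricting $P_\varepsilon$ to a fibre reduces it to the fibrewise Lichnerowicz operator of $\omega_\varepsilon|_{\scV}$, and Proposition \ref{prop:fibrewiselinear} then gives $P_\varepsilon = L_\omega + O(\varepsilon)$.

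For the action of $R_\varepsilon$ on $\pi^* f$ with $f \in C^\infty(\mathbb{P}^1)$, all vertical derivatives of $\pi^* f$ vanish, leaving only the purely horizontal contribution $\tfrac{1}{\lambda^2} R^{\scH}_\varepsilon$. At leading order the relevant horizontal metric is $\lambda\varepsilon^{-\delta}\pi^*\omega_{FS}$, under which the Lichnerowicz operator applied to the pullback of $f$ equals $\pi^*\mathcal{D}^*_{\omega_{FS}}\mathcal{D}_{\omega_{FS}}(f)$ after extracting the overall scale; combining with the sign from the linearisation formula recalled in Section \ref{sec:background} gives
\[
R_\varepsilon(\pi^* f) = -\tfrac{1}{\lambda^2}\,\pi^*\bigl(\mathcal{D}^*_{\omega_{FS}}\mathcal{D}_{\omega_{FS}} f\bigr) + O(\varepsilon),
\]
where the $O(\varepsilon)$ error accumulates from the $O(\varepsilon^{\delta-1})$ correction to $\Omega_\varepsilon|_{\scH}$, the $O(\varepsilon)$ deviation of $J_\varepsilon$ from $J_0$, and the subleading Ricci and scalar curvature contributions.

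Persistence under perturbations is then a matter of bookkeeping: a fibrewise potential $\phi_\varepsilon$ of $C^{4,\alpha}$-norm $O(\varepsilon^\tau)$ perturbs only the vertical metric by $O(\varepsilon^\tau)$, and an analogue of Proposition \ref{prop:linearisationchange} applied fibrewise shows each coefficient in the expansion is perturbed by at most $O(\varepsilon^\tau)$; whereas a pullback perturbation $\pi^*\psi_\varepsilon$ of size $O(\varepsilon^{\tau-\delta})$ alters $\Omega_\varepsilon|_{\scH}$ by $O(\varepsilon^{\tau-\delta})$, which is $O(\varepsilon^{\tau})$ relative to the horizontal scale $\varepsilon^{-\delta}$. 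The main technical obstacle throughout is uniform control across the transition annulus $\tfrac{\varepsilon}{2} \leq |t| \leq \tfrac{3\varepsilon}{4}$, where $\ddb\chi(|t|/\varepsilon)$ contributes $O(\varepsilon^{-2})$ terms to the horizontal part of $\omega_\varepsilon$; these are tamed by the $O(\varepsilon)$ factor $\phi_t - \phi_\varepsilon$ from Proposition \ref{prop:potentialbound}, but one must check uniformly in this annulus that $\lambda\varepsilon^{-\delta}\pi^*\omega_{FS}$ remains the dominant horizontal term, which holds precisely because $\delta \geq 1$.
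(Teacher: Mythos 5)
Your proposal is correct and follows essentially the same route as the paper: discard the gradient term using Proposition \ref{prop:firstapproxerror}, expand the Lichnerowicz operator according to the vertical--horizontal splitting with the horizontal metric scaling like $\lambda\varepsilon^{-\delta}$ (the paper does this concretely via the wedge-product expansions of $\Omega_{\varepsilon}^{n+1}$ and $\Omega_{\varepsilon}^{n}$, which is your ``count the horizontal contractions'' argument made explicit), identify the vertical leading term with the central-fibre Lichnerowicz operator, compute the model on pulled-back functions, and check persistence by the same scaling bookkeeping. The only small imprecision is that reducing $P_{\varepsilon}$ on a fibre to $L_{\omega}+O(\varepsilon)$ uses Proposition \ref{prop:linearisationchange} together with Proposition \ref{prop:fibrewiselinear} (since $\omega_{\varepsilon}|_{\scX_t}$ differs from $\alpha_{\varepsilon}$ by an $O(\varepsilon)$ potential), and that for functions pulled back from $\mathbb{P}^1$ the gradient term is harmless precisely because $S(\omega_{FS})$ is constant, a point the paper notes explicitly.
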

\begin{proof}
We use the standard expression
\begin{align*}
L_{\varepsilon} (f) &= - \mathcal{D}^*_{\Omega_{\varepsilon}} \mathcal{D}_{\Omega_{\varepsilon}} (f) + \frac{1}{2} \langle \nabla S(\Omega_{\varepsilon}), \nabla f \rangle
\end{align*}
for the linearisation of the scalar curvature. 

We begin with the leading order term of the expansion. First note that, since, by Proposition \ref{prop:firstapproxerror}, $S(\Omega_{\varepsilon}) = \hat S + O(\varepsilon)$, the gradient term does not enter to leading order in $\varepsilon$. We are therefore left with expanding the Lichnerowicz operator. Now, the Lichnerowicz operator admits an expansion
\begin{align*}
L_{\Omega_{\varepsilon}} (f) = \Delta^2_{\Omega_{\varepsilon}} (f) + \langle \Ric (\Omega_{\varepsilon}) , \ddb (f) \rangle_{\Omega_{\varepsilon}} + \frac{1}{2}  \langle \nabla S(\Omega_{\varepsilon}), \nabla f \rangle_{\Omega_{\varepsilon}}.
\end{align*}
So, we need to understand the expansion of the Laplacian, the Ricci curvature and the scalar curvature.

We begin with the Laplacian. The Laplacian operator can be written 
$$
\Delta_{\Omega_{\varepsilon}} (f) \Omega_{\varepsilon}^{n+1} = (n+1) \ddb (f) \wedge \Omega_{\varepsilon}^n.
$$
since the base $\mathbb{P}^1$ is one-dimensional, we have expansions
\begin{align*}
\Omega_{\varepsilon}^{n+1} =&   \omega_{\varepsilon}^{n+1}  + (n+1) \lambda \varepsilon^{- \delta} \pi^* \omega_{FS} \wedge \omega_{\varepsilon}^n \\
\Omega_{\varepsilon}^{n} =&  \omega_{\varepsilon}^{n}  + n \lambda \varepsilon^{- \delta} \pi^* \omega_{FS} \wedge \omega_{\varepsilon}^{n-1} 
\end{align*}
Thus the leading order term of the Laplacian equation is
$$
 \Delta_{\Omega_{\varepsilon}} (f) \pi^* \omega_{FS} \wedge \omega_{\varepsilon}^{n} =  n \ddb (f) \wedge \pi^* \omega_{FS} \wedge \omega_{\varepsilon}^{n-1} + O(\varepsilon^{-1}).
$$
In the leading order term, we therefore only get a vertical contribution from $\ddb (f)$. This gives that 
\begin{align*}
\Delta_{\Omega_{\varepsilon}} (f) =&  \Lambda_{\omega_{\varepsilon}} ( \ddb_{\mathcal{V}} (f) ) + O(\varepsilon) \\
=& \Delta_{\scX_0} (f) + O(\varepsilon).
\end{align*}
Here we are also using that $J_{\varepsilon} - J_0 = O(\varepsilon)$ so that we get a similar bound for the difference of the $\ddb$-operators near the central fibre, and hence everywhere (since the family of K\"ahler manifolds is constant for a given $\varepsilon$ away from a small neighbourhood of the central fibre). 

For the Ricci term, we have from the proof of Proposition \ref{prop:firstapproxerror} that
$$
\Ric (\Omega_{\varepsilon}) = \rho_{\varepsilon} + \Ric(\omega_{FS}) + O(\varepsilon).
$$
Since the inner product $ \langle \Ric (\Omega_{\varepsilon}) , \ddb (f) \rangle_{\Omega_{\varepsilon}}$ equals the inner product with respect to the vertical metric from the central fibre to leading order, we therefore have 
$$
 \langle \Ric (\Omega_{\varepsilon}) , \ddb (f) \rangle_{\Omega_{\varepsilon}} =  \langle \Ric (\omega) , \ddb (f) \rangle_{\omega} + O(\varepsilon).
$$

Finally, the scalar curvature term vanishes to leading order, since $S(\Omega_{\varepsilon})$ is constant to leading order. This term also does not enter in the Lichnerowicz operator on the central fibre, since the central fibre has constant scalar curvature.

Together with the above, this shows that the leading order term of the expansion above is the negative of the Lichnerowicz operator of the metric on the central fibre, which gives the expansion of $P_{\varepsilon}$ (since $\delta \geq 1$, and so all the errors are $O(\varepsilon)$).

Next, we have a refined expansion when the function is of the form $\pi^* (f)$, i.e. the function is pulled back from $\mathbb{P}^1$. In this case, the expansion of the Laplacian becomes
$$
\lambda \varepsilon^{- \delta} \Delta_{\Omega_{\varepsilon}} (f) \pi^* \omega_{FS} \wedge \omega_{\varepsilon}  =  n \ddb(f) \wedge \omega_{\varepsilon}^{n} + O(\varepsilon)
$$
where we are omitting pullback from the notation. The above equation follows because $n \ddb (f) \wedge \pi^* \omega_{FS} \wedge \omega_{\varepsilon}^{n-1}  = 0$, as f is pulled back from $\mathbb{P}^1$. It follows that 
$$
\Delta_{\Omega_{\varepsilon}} (\pi^* (f) ) = \frac{\varepsilon^{\delta}}{\lambda} \cdot \pi^* (\Delta_{\omega_{FS}} (f) ) + O(\varepsilon^{\delta+1}).
$$
and so
$$
\Delta_{\Omega_{\varepsilon}}^2 (\pi^* (f) ) = \frac{\varepsilon^{2\delta}}{\lambda^2} \cdot \pi^* (\Delta^2_{\omega_{FS}} (f) ) + O(\varepsilon^{2 \delta+1}).
$$

For the Ricci curvature, we now have
\begin{align*}
 \langle \Ric (\Omega_{\varepsilon}) , \ddb (f) \rangle_{\Omega_{\varepsilon}} =&  \langle \Ric (\omega_{FS}) , \ddb (f) \rangle_{\lambda \varepsilon^{-\delta} \omega_{FS} + (\omega_{\varepsilon})_{\scH} } + O(\varepsilon^{3 \delta}) \\
=& \frac{\varepsilon^{2\delta}}{\lambda^2} \langle \Ric (\omega_{FS}) , \ddb (f) \rangle_{\omega_{FS} } + O(\varepsilon^{3 \delta}) 
\end{align*}
Note that here we are using the induced inner product on $2$-forms, which scales like the scaling factor to the power $-2$, hence the equation in the second line and that the error is $O(\varepsilon^{3 \delta})$.

Again, the scalar curvature term does not enter either in the expansion or in the model on $\mathbb{P}^1$, as $\omega_{FS}$ (though, if we had pulled back another metric, we would have a matching of these terms anyway, so this does not use that we have the Fubini--Study metric in any essential way). Thus the claimed expansion on pulled back functions also holds.

Finally, we need to check that the expansions persist under certain perturbations of $\Omega_{\varepsilon}$. It is immediate that the $O(1)$-term of the expansion will be unchanged upon perturbing $\Omega_{\varepsilon}$ by $\varepsilon^{\tau} \ddb \phi$, for any $\phi$. We can also allow a negative power of $\varepsilon$ and keep the same expansion, when $\phi = \pi^* f$ is pulled back from $\mathbb{P}^1$. The vertical component of the metric is then unaffected. In the horizontal direction, we then need to be perturbing $\omega_{FS}$, which we do provided the exponent is strictly larger than $-\delta$. This completes the proof. 
\end{proof}

As used at the end of the proof above, we can also perturb $\omega_{FS}$ before pulling back. This perturbs $\Omega_{\varepsilon}$ by terms that at first glance look like they blow up.
\begin{lemma}
\label{lem:linhorizontal}
Let $f \in C^{\infty} (\mathbb{P}^1)$. Then for any $\tau > 0$, we have that 
$$
S\left( \Omega_{\varepsilon} + \varepsilon^{\tau- \delta} \ddb \pi^* (f) \right) = S(\Omega_{\varepsilon}) - \frac{\varepsilon^{\delta + \tau}}{\lambda^2} \pi^* \left( \mathcal{D}^*_{\omega_{FS}} \mathcal{D}_{\omega_{FS}} (f) \right)  + O(\varepsilon^{\delta + \tau + 1}).
$$
\end{lemma}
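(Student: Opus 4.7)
The plan is to apply the mean value theorem and then invoke the refined linearisation expansion of Proposition~\ref{prop:linearisation}. First, by the mean value theorem,
\[
S\!\left(\Omega_\varepsilon + \varepsilon^{\tau-\delta}\ddb \pi^*f\right) - S(\Omega_\varepsilon) = L_r\!\left(\varepsilon^{\tau-\delta}\pi^*f\right),
\]
where $L_r$ denotes the linearisation of the scalar curvature at the intermediate metric $\Omega_\varepsilon + r\varepsilon^{\tau-\delta}\ddb \pi^*f$ for some $r \in [0,1]$. Since the perturbing potential is $\varepsilon^{\tau-\delta}\pi^*f$ with $\tau >0$ and $f$ pulled back from $\mathbb{P}^1$, the last part of Proposition~\ref{prop:linearisation} guarantees that the expansion $L_r = P_r + \varepsilon^{2\delta}R_r$ persists at this perturbed metric, with $P_r = L_\omega + O(\varepsilon)$ having vertical image and the horizontal component of $R_r$ satisfying $R_r(\pi^*f) = -\lambda^{-2}\pi^*\!\left(\mathcal{D}^*_{\omega_{FS}}\mathcal{D}_{\omega_{FS}}f\right) + O(\varepsilon)$.

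Evaluating $L_r$ on $\varepsilon^{\tau-\delta}\pi^*f$ then splits as
\[
L_r\!\left(\varepsilon^{\tau-\delta}\pi^*f\right) = \varepsilon^{\tau-\delta}P_r(\pi^*f) + \varepsilon^{\tau+\delta}R_r(\pi^*f),
\]
and the $R_r$-term immediately produces the claimed leading correction,
\[
\varepsilon^{\tau+\delta}R_r(\pi^*f) = -\frac{\varepsilon^{\tau+\delta}}{\lambda^2}\pi^*\!\left(\mathcal{D}^*_{\omega_{FS}}\mathcal{D}_{\omega_{FS}}f\right) + O(\varepsilon^{\tau+\delta+1}).
\]

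The delicate part, and the main obstacle, is to show that the $P_r$-piece is absorbed into the error term $O(\varepsilon^{\tau+\delta+1})$, since at first glance $\varepsilon^{\tau-\delta}P_r(\pi^*f)$ looks like it could be as large as $O(\varepsilon^{\tau-\delta+1})$. The key point is that $\pi^*f$ restricts to a constant on every fibre $\scX_t$, and $L_\omega$ is the fibrewise Lichnerowicz operator of the cscK central fibre, which annihilates constants; so $L_\omega(\pi^*f)=0$. Tracking through the proof of Proposition~\ref{prop:linearisation}, the corrections in $P_r - L_\omega$ are likewise built from fibrewise differential operators, so the only way $\pi^*f$ can enter $P_r(\pi^*f)$ is through its horizontal derivatives; but these are measured against $\Omega_\varepsilon$, whose inverse horizontal metric has size $\varepsilon^\delta$. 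Combining the $O(\varepsilon)$ size of the correction with this $\varepsilon^\delta$ gain yields $P_r(\pi^*f) = O(\varepsilon^{2\delta+1})$, and therefore $\varepsilon^{\tau-\delta}P_r(\pi^*f) = O(\varepsilon^{\tau+\delta+1})$. Adding the two contributions then completes the proof.
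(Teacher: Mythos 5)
Your overall route is the same as the paper's: the lemma is deduced from the horizontal computations in Proposition~\ref{prop:linearisation}, applied at the intermediate metric via the mean value theorem, with the persistence clause of that proposition (pulled-back potentials of size $O(\varepsilon^{\tau-\delta})$, $\tau>0$) justifying that the expansion is available there. You also correctly isolate the one point the bare statement of Proposition~\ref{prop:linearisation} does not settle: it only gives $P_r=L_\omega+O(\varepsilon)$, and since $L_\omega(\pi^*f)=0$ this yields just $P_r(\pi^*f)=O(\varepsilon)$, hence $\varepsilon^{\tau-\delta}P_r(\pi^*f)=O(\varepsilon^{\tau-\delta+1})$, which is far too weak; so one must indeed reopen the proof of that proposition.

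The gap is in the estimate you then supply. You combine the $O(\varepsilon)$ size of $P_r-L_\omega$ with a single factor $\varepsilon^{\delta}$ from the inverse horizontal metric and conclude $P_r(\pi^*f)=O(\varepsilon^{2\delta+1})$; but that combination only gives $O(\varepsilon^{\delta+1})$, and then $\varepsilon^{\tau-\delta}P_r(\pi^*f)=O(\varepsilon^{\tau+1})$, which for $\delta>1$ is \emph{larger} than the main term $\varepsilon^{\tau+\delta}$ rather than absorbable into $O(\varepsilon^{\tau+\delta+1})$. The bound you need, $O(\varepsilon^{2\delta+1})$, is exactly what the explicit computations in the proof of Proposition~\ref{prop:linearisation} deliver, but for a different reason than a single metric gain: when the fourth-order operator is evaluated on a pulled-back function, \emph{two} horizontal contractions occur, namely
$$
\Delta^2_{\Omega_\varepsilon}(\pi^*f)=\frac{\varepsilon^{2\delta}}{\lambda^2}\,\pi^*\bigl(\Delta^2_{\omega_{FS}}f\bigr)+O(\varepsilon^{2\delta+1}),
\qquad
\langle \Ric(\Omega_\varepsilon),\ddb\,\pi^*f\rangle_{\Omega_\varepsilon}=\frac{\varepsilon^{2\delta}}{\lambda^2}\langle\Ric(\omega_{FS}),\ddb f\rangle_{\omega_{FS}}+O(\varepsilon^{3\delta}),
$$
with the gradient term $\tfrac12\langle\nabla S,\nabla\pi^*f\rangle$ handled alongside; so the gain is $\varepsilon^{2\delta}$, not $\varepsilon^{\delta}$, and the statement that comes out concerns all of $L_r(\pi^*f)$ at once rather than $P_r(\pi^*f)$ and $R_r(\pi^*f)$ separately. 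Replacing your heuristic by this computation closes the argument and is precisely what the paper's one-line proof means by ``the same computations as for the horizontal terms in Proposition~\ref{prop:linearisation}''.
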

\begin{proof}
This is exactly the same as the computations for the horizontal terms in Proposition \ref{prop:linearisation}.
\end{proof}

\begin{remark}
Of course, the above lemma can be applied iteratively. If we define $\Omega_{\varepsilon}' =  \Omega_{\varepsilon} + \varepsilon^{\tau- \delta} \ddb \pi^* (f)$ and then perturb $\Omega_{\varepsilon}'$ to $\Omega_{\varepsilon}' + \varepsilon^{\tau'-\delta} \ddb ( \pi^* \phi)$ for some $\tau'>\tau$, then 
$$
S \left( \Omega'_{\varepsilon} + \varepsilon^{\tau'- \delta} \ddb \pi^* (\phi) \right) = S \left( \Omega_{\varepsilon}' \right) - \frac{\varepsilon^{\delta + \tau'}}{\lambda^2} \pi^* \left( \mathcal{D}^*_{\omega_{FS}} \mathcal{D}_{\omega_{FS}} (\phi) \right)  + O(\varepsilon^{\delta + \tau' + 1}).
$$
Similar statements hold when we iterate multiple times, and also when we further perturb $\Omega_{\varepsilon}'$ by functions not necessarily pulled back from $\mathbb{P}^1$, but which are decaying with $\varepsilon$.
\end{remark}

Having this understanding of the linearised operator, we can now improve the approximate solutions to be extremal to as high order in $\varepsilon$ as we would like.
\begin{proposition}
\label{prop:approxsoln}
For any $\kappa \geq 0$, there exists a K\"ahler form $\Omega_{\varepsilon,\kappa}$ on $\scX$ and a holomorphy potential $h_{\varepsilon, \kappa}$ with respect to  $\Omega_{\varepsilon,\kappa}$ such that
$$
S \left( \Omega_{\varepsilon,\kappa} \right) = h_{\varepsilon, \kappa} + O(\varepsilon^{\kappa+1}).
$$
\end{proposition}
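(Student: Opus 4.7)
The plan is to proceed by induction on $\kappa$, starting from $\Omega_{\varepsilon,0} = \Omega_\varepsilon$ and $h_{\varepsilon,0} = \hat S$, for which Proposition \ref{prop:firstapproxerror} already gives the desired estimate with $\tau > 0$. Assuming we have constructed $\Omega_{\varepsilon,\kappa}$ and a holomorphy potential $h_{\varepsilon,\kappa}$ with $S(\Omega_{\varepsilon,\kappa}) = h_{\varepsilon,\kappa} + \varepsilon^{\kappa+1}\psi + O(\varepsilon^{\kappa+2})$ for some function $\psi$ on $\scX$, the goal is to correct $\Omega_{\varepsilon,\kappa}$ and $h_{\varepsilon,\kappa}$ so as to push the error to order $\varepsilon^{\kappa+2}$.

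The inductive step uses two kinds of corrections. A vertical correction $\varepsilon^{\kappa+1}\ddb \phi_V$ with $\phi_V \in C^\infty(\scX)$ changes the scalar curvature by $-\varepsilon^{\kappa+1}L_\omega(\phi_V) + O(\varepsilon^{\kappa+2})$ by Proposition \ref{prop:linearisation}, where $L_\omega$ is the Lichnerowicz operator on the central fibre. When $\kappa+1 > \delta$, a horizontal correction $\varepsilon^{\kappa+1-2\delta}\ddb \pi^* \phi_H$ with $\phi_H \in C^\infty(\mathbb{P}^1)$ is admissible and, by Lemma \ref{lem:linhorizontal}, changes the scalar curvature by $-\lambda^{-2}\varepsilon^{\kappa+1}\pi^*(\mathcal{D}^*_{\omega_{FS}}\mathcal{D}_{\omega_{FS}}\phi_H) + O(\varepsilon^{\kappa+2})$. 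Simultaneously we allow $h_{\varepsilon,\kappa+1} = h_{\varepsilon,\kappa} + \varepsilon^{\kappa+1}(c_1 + c_2 h_\varepsilon)$ for constants $c_1, c_2$. At order $\varepsilon^{\kappa+1}$ we therefore must solve
$$L_\omega(\phi_V) + \tfrac{1}{\lambda^2}\pi^*\bigl(\mathcal{D}^*_{\omega_{FS}}\mathcal{D}_{\omega_{FS}}\phi_H\bigr) + c_1 + c_2 h_\varepsilon = \psi.$$

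To solve this equation, first decompose $\psi$ fibrewise into its average $\bar\psi$ (a function on $\mathbb{P}^1$) and its mean-zero remainder $\psi^\perp$, then further split $\psi^\perp = \alpha(t) h_0 + \psi^{\perp\perp}$ with $\psi^{\perp\perp}$ fibrewise orthogonal to $\ker L_\omega|_{\scX_0} = \langle 1, h_0\rangle$. The part $\psi^{\perp\perp}$ lies in the image of $L_\omega$ and can be inverted fibrewise to produce $\phi_V$. The remaining terms $\alpha(t)h_0 + \bar\psi(t)$ depend only on $t$ up to the known dependence of $h_0$ on the fibre; using the explicit formula $h_\varepsilon = \chi(t/\varepsilon)h_0 + \lambda\varepsilon^{-\delta}h_{FS}$ from Lemma \ref{lem:holpots} to absorb the $h_0$-content, this reduces to a scalar equation on $\mathbb{P}^1$, which is solved by inverting $\mathcal{D}^*_{\omega_{FS}}\mathcal{D}_{\omega_{FS}}$ modulo its kernel $\langle 1, h_{FS}\rangle$. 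The constants $c_1, c_2$ are then chosen precisely to cancel the projection of the right hand side onto this kernel, guaranteeing integrability.

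The main obstacle is the interplay between the vertical and horizontal problems, particularly at the low orders $\kappa+1 \leq \delta$ where horizontal perturbations are not permitted. In this range, however, the refined expansion in Proposition \ref{prop:firstapproxerror} shows the error is purely vertical to leading order (the horizontal contribution $\lambda^{-1}\varepsilon^\delta S(\omega_{FS})$ appears only from order $\varepsilon^\delta$, and at its first appearance is absorbed by $c_2$), so only the fibrewise inversion of $L_\omega$ is required. A secondary subtlety is that the $\alpha(t)h_0$-component lies in the fibrewise kernel of $L_\omega$ yet genuinely depends on the base variable and so cannot be killed by a vertical perturbation; the resolution is to translate it into a base-level equation via the relation between $h_0$ and $h_\varepsilon$ furnished by Lemma \ref{lem:holpots}. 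The $\mathbb{C}^*$ discrepancy hypothesis is crucial here, as it guarantees that the fibrewise cokernel of $L_\omega$ contributes exactly the one new direction $h_0$ beyond constants, matching the one-parameter family of Fubini--Study holomorphy potentials on $\mathbb{P}^1$ and the two available constants $c_1, c_2$.
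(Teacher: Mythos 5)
Your overall strategy is the paper's: induct on $\kappa$, remove vertical errors by inverting the central-fibre Lichnerowicz operator (Proposition \ref{prop:linearisation}), remove horizontal errors at order $\varepsilon^{\kappa+1}$ for $\kappa\geq\delta$ by a base potential added at weight $\varepsilon^{\kappa+1-2\delta}$ (Lemma \ref{lem:linhorizontal}), and absorb cokernel directions into the genuine holomorphy potentials via Lemma \ref{lem:holpots}. But there is a genuine gap at exactly the point you flag as a ``secondary subtlety''. A vertical error component $\alpha(t)h_0$ with genuinely base-dependent coefficient cannot be ``translated into a base-level equation via the relation between $h_0$ and $h_\varepsilon$'': it is not pulled back from $\mathbb{P}^1$, so the operator $\mathcal{D}^*_{\omega_{FS}}\mathcal{D}_{\omega_{FS}}$ can neither produce nor cancel it; it lies fibrewise in $\ker L_\omega$, so no vertical correction reaches it; and the only vertical content available from actual holomorphy potentials is the \emph{fixed} profile $\chi(t/\varepsilon)h_0$ inside $h_\varepsilon$, not an arbitrary $\alpha(t)$. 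The paper never solves an equation with such a right-hand side: its induction is organised so that the only kernel-type vertical terms that appear come with precisely this admissible profile (they are generated by the gradient-correction terms involving $\chi(t/\varepsilon)h_0$ after the holomorphy potential is modified), and they are then removed by re-running the vertical step. As written, your scheme has no mechanism for a general $\alpha(t)h_0$, so the inductive step is not closed.

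Related to this, you drop the change-of-potential bookkeeping that the paper carries throughout. A holomorphy potential with respect to $\Omega_{\varepsilon,\kappa}$ is not a constant-coefficient combination $c_1+c_2h_\varepsilon$; it is such a combination plus the terms $\tfrac12\langle\nabla h,\nabla(\sum_j\varepsilon^jf_j+\sum_j\varepsilon^{j-\delta}\pi^*\phi_j)\rangle$, and these cross terms are exactly what the paper has to control. Moreover, since $h_\varepsilon=\chi(t/\varepsilon)h_0+\lambda\varepsilon^{-\delta}h_{FS}$, matching the leftover $h_{FS}$-direction at order $\varepsilon^{\kappa+1}$ by a multiple of $h_\varepsilon$ necessarily alters the potential at a different order than $\varepsilon^{\kappa+1}$, so your single linear equation ``$L_\omega(\phi_V)+\lambda^{-2}\pi^*(\mathcal{D}^*_{\omega_{FS}}\mathcal{D}_{\omega_{FS}}\phi_H)+c_1+c_2h_\varepsilon=\psi$'' posed purely at order $\varepsilon^{\kappa+1}$ is inconsistent with the induction hypothesis at the other orders. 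In the paper this step introduces new \emph{vertical} errors at orders starting from $\varepsilon^{\kappa+2-\delta}$, which are then removed by going back and altering the previously chosen $f_j$'s, together with the check (via Proposition \ref{prop:linearisation}) that vertical potentials only create horizontal terms $2\delta$ orders later, so no horizontal error is reintroduced below order $\varepsilon^{\kappa+2}$. This cascade, absent from your proposal, is the core of the paper's argument in the range $\kappa\geq\delta$; your treatment of the range $\kappa+1\leq\delta$ (purely vertical errors, constants absorbing $\lambda^{-1}S(\omega_{FS})$ at order $\varepsilon^\delta$) does agree with the paper.
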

\begin{proof}
For $\kappa=0$, we take $\Omega_{\varepsilon,0} = \Omega_{\varepsilon}$. Proposition \ref{prop:firstapproxerror} then shows that we have the required expansion, with $h_{\varepsilon,0} = \hat S$ a constant, which is a potential for the trivial holomorphic vector field on $\scX$. 

We proceed to construct better approximate solutions inductively. For simplicity we will assume that $\delta$ is an integer. This is not essential, and is simply for notational convenience. We can then do induction over $\kappa \in \mathbb{Z}_{\geq 0}$ and assume that all terms appear at integer values of $\varepsilon$. Ultimately we can choose $\delta =2$ to make the construction work, so we lose nothing in the proof of the main result in assuming this.

We will inductively show that we can find
\begin{itemize}
\item functions $f_0, \ldots, f_{\kappa} \in C^{\infty}_0 (\scX)$;
\item functions $\phi_0, \ldots, \phi_{\kappa} \in C^{\infty} (\mathbb{P}^1)$;
\item holomorphy potentials $h_0, \ldots, h_{\kappa}$ with respect to $\Omega_{\varepsilon}$;
\end{itemize}
such that, if we put 
$$
\Omega_{\varepsilon, \kappa} = \Omega_{\varepsilon} + \ddb\left( \sum \varepsilon^j f_j + \sum_j \varepsilon^{j-\delta} \pi^* \phi_j \right)
$$
then $\Omega_{\varepsilon, \kappa}$ satisfies
$$
S \left( \Omega_{\varepsilon, \kappa} \right) = h_{\varepsilon, \kappa} + O(\varepsilon^{\kappa +1})
$$
where
$$
h_{\varepsilon,\kappa} = \sum_{j=0}^{\kappa} \varepsilon^{j} h_j + \left\langle \nabla \left(  \sum_{j=0}^{\kappa} \varepsilon^j h_j \right), \nabla \left( \sum_{j=0}^{\kappa} \varepsilon^j f_j + \sum_{j=0}^{\kappa} \varepsilon^{j-\delta} \pi^* \phi_j \right) \right\rangle,
$$
which is a holomorphy potential with respect to $\Omega_{\varepsilon, \kappa}$. As remarked above, for $\kappa = 0$, we have the required statement with $f_0 = 0 = \phi_0$ and $h_0 = \hat S$, the average scalar curvature of the fibres.

The argument differs depending on whether or not $\kappa \geq \delta$. So we begin with $\kappa < \delta$. In this case, we will additionally show inductively that there is no horizontal term up to order $\varepsilon^{\delta}$ and that the horizontal term at order $\varepsilon^{\delta}$ is precisely given by the horizontal term in the expansion of Proposition  \ref{prop:firstapproxerror}.
 
By the induction assumption, 
$$
S \left( \Omega_{\varepsilon, \kappa} \right) - h_{\varepsilon, \kappa} = l_{\kappa +1} \varepsilon^{\kappa + 1} + O(\varepsilon^{\kappa + 2})
$$
for some function $l_{\kappa + 1} \in C^{\infty}_0 (\scX)$. By the mapping properties of $L_{\omega}$, there exists $f_{\kappa +1} \in C^{\infty}_{0} (\scX)$ and a holomorphy potential $h_{\kappa +1}$ such that 
$$
L_{\omega} (f_{\kappa +1} ) = - l_{\kappa +1} + h_{\kappa +1}.
$$
By Proposition \ref{prop:linearisation} applied to $ \Omega_{\varepsilon, \kappa}$, we therefore have that 
\begin{align*}
& S \left( \Omega_{\varepsilon, \kappa} + \ddb ( \varepsilon^{\kappa + 1} f_{\kappa+1} ) \right) \\
=& S( \Omega_{\varepsilon, \kappa} ) - \varepsilon^{\kappa+1} l_{\kappa +1}  +  \varepsilon^{\kappa +1 } h_{\kappa+1} + O(\varepsilon^{\kappa+2}) \\
=& \sum_{j=0}^{\kappa+1} \varepsilon^{j} h_j +  \left\langle \nabla \left(  \sum_{j=0}^{\kappa} \varepsilon^j h_j \right), \nabla \left( \sum_{j=0}^{\kappa} \varepsilon^j f_j + \sum_{j=0}^{\kappa} \varepsilon^{j-\delta} \pi^* \phi_j \right)  \right\rangle + O(\varepsilon^{\kappa+2}).
\end{align*}
Moreover, since the terms 
$$
 \left\langle \nabla \left(   \varepsilon^{\kappa +1} h_j \right), \nabla \left( \sum_{j=0}^{\kappa+1} \varepsilon^j f_j + \sum_{j=0}^{\kappa+1} \varepsilon^{\delta-j} \pi^* \phi_j \right) \right\rangle
$$
and
$$
 \left\langle \nabla \left(  \sum_{j=0}^{\kappa +1} \varepsilon^j h_j \right), \nabla \left(  \varepsilon^{\kappa+1} f_{\kappa+1} \right) \right\rangle
$$
are $O(\varepsilon^{\kappa+2})$ (since the $\kappa=0$ terms are all constant), we have that the above agrees with $h_{\varepsilon, \kappa+1}$ at order $\varepsilon^{\kappa+1}$, i.e.
\begin{align*}
 S \left( \Omega_{\varepsilon, \kappa} + \ddb ( \varepsilon^{\kappa + 1} f_{\kappa+1} ) \right) =&h_{\varepsilon, \kappa+1} + O(\varepsilon^{\kappa+2}),
\end{align*}
where
$$
h_{\varepsilon,\kappa+1} = \sum_{j=0}^{\kappa+1} \varepsilon^{j} h_j + \left\langle \nabla \left(  \sum_{j=0}^{\kappa+1} \varepsilon^j h_j \right), \nabla \left( \sum_{j=0}^{\kappa +1} \varepsilon^j f_j + \sum_{j=0}^{\kappa +1 } \varepsilon^{\delta-j} \pi^* \phi_j \right) \right\rangle,
$$ 
Thus we have the required expansion with $f_{\kappa+1}$ and $h_{\kappa +1}$ as above and $\phi_{\kappa +1} = 0$.

It remains to show that there are no horizontal terms up to order $\varepsilon^{\delta}$, apart from our original one at order exactly $\varepsilon^{\delta}$. This is again a consequence of Proposition \ref{prop:linearisation}, since the linearisation only hits horizontal terms at order $\varepsilon^{2\delta}$.

We now continue for $\kappa \geq \delta$. In this case, there will be horizontal error terms, too, and we begin by correcting these. Suppose 
$$
S \left( \Omega_{\varepsilon, \kappa} \right) - h_{\varepsilon, \kappa} = l_{\kappa +1} \varepsilon^{\kappa + 1} + \psi_{\kappa+1} \varepsilon^{\kappa+1} + O(\varepsilon^{\kappa + 2}),
$$
where $l_{\kappa+1} \in C^{\infty}_0 (\scX)$ and $\psi_{\kappa+1} \in C^{\infty} (\mathbb{P}^1)$. By the mapping properties of the Lichnerowicz operator on $\mathbb{P}^1$, there exists $\phi_{\kappa+1} \in C^{\infty} (\mathbb{P}^1)$ and a holomorphy potential $h^1_{\kappa+1}$ on $\mathbb{P}^1$, with respect $\omega_{FS}$, such that 
$$
\mathcal{D}^*_{\omega_{FS}} \mathcal{D}_{\omega_{FS}} (\phi_{\kappa +1 })  = \psi_{\kappa+1} - h^1_{\kappa+1}.
$$

By Proposition \ref{prop:linearisation} and Lemma \ref{lem:linhorizontal}, we therefore have that $\Omega_{\varepsilon, \kappa} + \ddb \left( \varepsilon^{\kappa +1 - 2 \delta} \right)$ is K\"ahler and 
\begin{align*}
S \left( \Omega_{\varepsilon, \kappa} + \ddb \left( \varepsilon^{\kappa+1-2\delta} \phi_{\kappa+1} \right) \right)=&S \left( \Omega_{\varepsilon, \kappa}  \right) - \varepsilon^{\kappa + 1} \psi_{\kappa+1} + \varepsilon^{\kappa+1} h^1_{\kappa+1}.
\end{align*}
Note that Lemma \ref{lem:linhorizontal} applies because $\kappa \geq \delta$, so $\kappa +1 - 2 \delta > - \delta$. 

We now invoke Lemma \ref{lem:holpots}, which allows us to compare $h_{\kappa + 1}^1$ to an actual holomorphy potential for $\Omega_{\varepsilon}$ on $\scX$.  We have that $h^1_{\kappa+1} = \upsilon h_{FS} + c$ for some $\upsilon$ constants $\upsilon$ and $c$. If we then write
$$
h^2_{\kappa +1} = \upsilon h_{\varepsilon} = \upsilon \left( \chi (t/\varepsilon) \cdot  h_0  + \lambda \varepsilon^{-\delta} h_{FS} \right)
$$
for the average zero potential for the corresponding multiple of the generator of the circle action on $\scX$, we then have that
\begin{align*}
S \left( \Omega_{\varepsilon, \kappa} + \ddb \left( \varepsilon^{\kappa+1-2\delta} \phi_{\kappa+1} \right) \right)=& \varepsilon^{\kappa+1-\delta} h^2_{\kappa+1} + \varepsilon^{\kappa+1} c + h_{\varepsilon, \kappa} \\
&+  \left\langle \nabla h_{\varepsilon,\kappa} , \nabla \left( \sum_{j=0}^{\kappa } \varepsilon^j f_j + \sum_{j=1}^{\kappa +1 } \varepsilon^{j-\delta} \pi^* \phi_j \right) \right\rangle\\
& + \varepsilon^{\kappa+1} l_{\kappa+1} + O(\varepsilon^{\kappa +2}).
\end{align*}

We now define 
$$
\widetilde{h}_{\varepsilon, \kappa +1} = h_{\varepsilon, \kappa} + \varepsilon^{\kappa+1} c + \varepsilon^{\kappa+1-\delta} h^2_{\kappa+1},
$$
so we have added an $O(\varepsilon^{\kappa+1})$ constant term, but also \emph{altered} the $\varepsilon^{\kappa+1-\delta}$-term, compared to $h_{\varepsilon, \kappa}$. If we then compare 
$$
S \left( \Omega_{\varepsilon, \kappa} + \ddb \left( \varepsilon^{\kappa+1-2\delta} \phi_{\kappa+1} \right) \right)
$$
to 
$$
\widetilde{h}_{\varepsilon,\kappa +1}  +  \left\langle \nabla \widetilde{h}_{\varepsilon,\kappa +1} , \nabla \left( \sum_{j=0}^{\kappa } \varepsilon^j f_j + \sum_{j=1}^{\kappa +1 } \varepsilon^{j-\delta} \pi^* \phi_j \right) \right\rangle
$$
we have no horizontal term to up and including order $\varepsilon^{\kappa+1}$: the only horizontal term that could appear comes from
\begin{align*}
\left\langle \varepsilon^{\kappa+1-\delta} h^2_{\kappa+1} , \nabla \left( \sum_{j=1}^{\kappa +1 } \varepsilon^{j-\delta} \pi^* \phi_j \right) \right\rangle 
\end{align*}
whose horizontal component to leading order is
\begin{align*}
\left\langle \varepsilon^{\kappa+1} h^1_{\kappa+1} , \nabla \left( \sum_{j=1}^{\kappa +1 } \varepsilon^{j-\delta} \pi^* \phi_j \right) \right\rangle_{\varepsilon^{-\delta} \omega_{FS}} &= \varepsilon^{ (\kappa +1) + (1 - \delta) + \delta } \left\langle h_{\kappa+1}^1, \phi_1 \right\rangle_{\omega_{FS}} \\
&= \varepsilon^{ \kappa +2} \left\langle h_{\kappa+1}^1, \phi_1 \right\rangle_{\omega_{FS}}.
\end{align*}
So the horizontal terms appearing in the difference of the two terms are at least $O(\varepsilon^{\kappa+2})$, as claimed.

On the other hand, we have introduced new vertical terms at smaller orders. These come from accounting for the inner product of 
$$
\nabla \left( \sum_{j=0}^{\kappa } \varepsilon^j f_j + \sum_{j=1}^{\kappa +1 } \varepsilon^{j-\delta} \pi^* \phi_j \right) 
$$
with the gradient of $\varepsilon^{\kappa+1-\delta} h^2_{\kappa+1}$. The leading order contribution is therefore the inner product with the gradient of $\varepsilon^{\kappa + 1 - \delta} \cdot \upsilon \cdot \chi (t/\varepsilon) \cdot h_0$. Since $f_0 =0$, this means we get new vertical contributions to all orders starting from order $\varepsilon^{\kappa+2-\delta}$. The upshot is that we have an expansion
\begin{align*}
S \left( \Omega_{\varepsilon, \kappa} + \ddb \left( \varepsilon^{\kappa+1-2\delta} \phi_{\kappa+1} \right) \right)=& \widetilde{h}_{\varepsilon,\kappa +1}  +  \left\langle \nabla \widetilde{h}_{\varepsilon,\kappa +1} , \nabla \left( \sum_{j=0}^{\kappa } \varepsilon^j f_j + \sum_{j=1}^{\kappa +1 } \varepsilon^{j-\delta} \pi^* \phi_j \right) \right\rangle \\
& + \sum_{j=0}^{\delta} \varepsilon^{\kappa+2+ j - \delta} l_j^{\kappa+1} + O(\varepsilon^{\kappa +2}),
\end{align*}
where all the $l_j^{\kappa+1} \in C^{\infty}_0 (\scX)$ are vertical. 

Following exactly the same argument as for the case when $\kappa < \delta$, we can remove these errors \emph{without reintroducing horizontal terms} until order $\varepsilon^{\kappa+2}$. This relies on the fact that when using vertical functions added at order $\varepsilon^i$, we are not introducing any horizontal terms until order $\varepsilon^{i+\delta}$. The upshot is that by altering the $f_j$'s in the previous steps and adding a suitable function $f_{\kappa+1}$ at order $\varepsilon^{\kappa+1}$, we get an expansion of the required type. This completes the proof.
\end{proof}

\subsection{Perturbing to a solution}
We are now ready to perturb the approximate solution constructed above to a genuine solution. 
\begin{proposition}
\label{prop:inversebound}
Suppose $\delta>1$. Let $\Psi_{\varepsilon, \kappa} : C^{k+4,\alpha} \times \overline{\mathfrak{h}} \to C^{k,\alpha}$ denote the operator
$$
\Psi_{\varepsilon, \kappa} (\phi, h) = L_{\varepsilon, \kappa} (\phi) - h_{\varepsilon, \kappa} - \frac{1}{2} \langle \nabla_{\Omega_{\varepsilon, \kappa}} (h_{\varepsilon, \kappa}), \nabla \phi \rangle,
$$
where $L_{\varepsilon, \kappa}$ is the linearisation of the scalar curvature operator at $\Omega_{\varepsilon, \kappa}$. Then $\Psi_{\varepsilon, \kappa}$ admits a right-inverse $Q_{\varepsilon, \kappa}$ satisfying
$$
\| Q_{\varepsilon, \kappa} \| \leq C \varepsilon^{- 2 \delta}.
$$
\end{proposition}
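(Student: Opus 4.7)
The approach is to invert $\Psi_{\varepsilon,\kappa}$ block-wise using the decomposition
$$
C^{k,\alpha}(\scX) = C^{k,\alpha}_0(\scX)\oplus \pi^*C^{k,\alpha}(\mathbb{P}^1),
$$
where $C^{k,\alpha}_0(\scX)$ consists of functions of fibrewise $\Omega_{\varepsilon,\kappa}$-average zero. Proposition \ref{prop:linearisation} makes $L_{\varepsilon,\kappa}$ essentially block-triangular for this decomposition: on the vertical summand it acts as the fibrewise operator $P_{\varepsilon,\kappa}$, a perturbation of the central-fibre Lichnerowicz $L_\omega$; on pullbacks it acts as $-\tfrac{\varepsilon^{2\delta}}{\lambda^2}\pi^*\mathcal{D}^*_{\omega_{FS}}\mathcal{D}_{\omega_{FS}}$ to leading order, since the fibrewise piece $P_{\varepsilon,\kappa}$ annihilates pullbacks. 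The vertical block inverts with $O(1)$ norm, the horizontal block with $O(\varepsilon^{-2\delta})$, and the larger bound dictates the claim.

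For the vertical part, Proposition \ref{prop:fibrewiselinear} together with the cscK property of the central fibre allows me to invert $P_{\varepsilon,\kappa}$ fibrewise modulo $\overline{\mathfrak{h}}$, yielding $Q_V \colon C^{k,\alpha}_0(\scX)\to C^{k+4,\alpha}_0(\scX)$ with $\|Q_V\|\le C$ uniform in $\varepsilon$. For the horizontal part, Fredholm theory on $(\mathbb{P}^1,\omega_{FS})$ supplies $g_0\in C^{k+4,\alpha}(\mathbb{P}^1)$ solving
$$
-\lambda^{-2}\mathcal{D}^*_{\omega_{FS}}\mathcal{D}_{\omega_{FS}}(g_0)=\psi_H \pmod{\overline{\mathfrak{h}}_{FS}},
$$
giving $Q_H(\pi^*\psi_H):=\varepsilon^{-2\delta}\pi^*g_0$ with $\|Q_H\|\le C\varepsilon^{-2\delta}$. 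Lemma \ref{lem:holpots} is then used to match both the vertical cokernel contributions from $Q_V$ and the pulled-back horizontal holomorphy potential $\pi^*h'_{FS}$ from $Q_H$ with genuine holomorphy potentials on $\scX$: the generator $h_\varepsilon$ has both a vertical summand $\chi(|t|/\varepsilon)h_0$ and a horizontal summand $\lambda\varepsilon^{-\delta}h_{FS}$, and the $\varepsilon^{-\delta}$ mismatch is harmless since $\delta\ge 1$.

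Setting $Q^{(0)} := Q_V\circ\Pi_V + Q_H\circ\Pi_H$ together with the corresponding choice of $h\in\overline{\mathfrak{h}}$, I compute
$$
\Psi_{\varepsilon,\kappa}\circ Q^{(0)} = \mathrm{Id} + E_\varepsilon
$$
modulo the matched holomorphy potentials, and claim $\|E_\varepsilon\|\le C\varepsilon$. A Neumann series then produces the honest right inverse
$$
Q_{\varepsilon,\kappa} := Q^{(0)}\circ(\mathrm{Id}+E_\varepsilon)^{-1},
$$
with $\|Q_{\varepsilon,\kappa}\|\le 2\|Q^{(0)}\|\le C\varepsilon^{-2\delta}$ for $\varepsilon$ small. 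The subleading term $\tfrac{1}{2}\langle\nabla h_{\varepsilon,\kappa},\nabla(\cdot)\rangle$ in $\Psi_{\varepsilon,\kappa}$ is uniformly bounded in $\varepsilon$, since horizontal gradients pick up $\varepsilon^{2\delta}$ factors from the inverse metric that more than compensate the $\varepsilon^{-\delta}$ in $h_{\varepsilon,\kappa}$, so it enters only as an admissible perturbation absorbed in the same iteration.

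The delicate step is the estimate $\|E_\varepsilon\|\le C\varepsilon$. The danger is $L_{\varepsilon,\kappa}\circ Q_H(\pi^*\psi_H) = \varepsilon^{-2\delta}L_{\varepsilon,\kappa}(\pi^*g_0)$, where any subleading contribution in $L_{\varepsilon,\kappa}(\pi^*g_0)$ gets multiplied by the large factor $\varepsilon^{-2\delta}$. The decisive structural input from Proposition \ref{prop:linearisation} is that $P_{\varepsilon,\kappa}(\pi^*g_0)$ vanishes (being a purely fibrewise differential operator applied to a vertically constant function), so the entirety of $L_{\varepsilon,\kappa}(\pi^*g_0)$ is $\varepsilon^{2\delta}R_{\varepsilon,\kappa}(\pi^*g_0)$, whose leading term reproduces $\pi^*\psi_H$ by construction of $g_0$ and whose remainder is $O(\varepsilon^{2\delta+1})\|g_0\|$. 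Multiplying by $\varepsilon^{-2\delta}$ then yields an error $O(\varepsilon)\|\psi_H\|$, as required. The remaining cross-term $\varepsilon^{2\delta}R_{\varepsilon,\kappa}(Q_V\psi_V)$ is of size $\varepsilon^{2\delta}\|\psi_V\|$, which is $O(\varepsilon)$ since $\delta>1$, and the vertical error $O(\varepsilon)\|\psi_V\|$ from $P_{\varepsilon,\kappa}=L_\omega+O(\varepsilon)$ is of the same order, completing the bound.
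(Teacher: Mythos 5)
Your overall strategy---decompose into fibrewise average-zero functions and pullbacks from $\mathbb{P}^1$, invert block-wise using Proposition \ref{prop:linearisation}, identify the $\varepsilon^{-2\delta}$ rate with the horizontal block, and absorb the remaining errors by a Neumann series---is the standard adiabatic argument of Fine and Dervan--Sektnan that the paper itself invokes (it omits the details and cites precisely those references). However, as written your vertical step has a genuine gap. On each fibre the model operator is $L_{\omega}$ up to $O(\varepsilon)$, and its cokernel is $\langle 1, h_0\rangle$, not just the constants. A general datum $\psi_V$ of fibrewise average zero still has, fibre by fibre, a component along $h_0$ with a coefficient $g(t)$ depending on the base point, so the fibrewise inversion can only achieve
$$
P_{\varepsilon,\kappa}\bigl(Q_V\psi_V\bigr) = \psi_V - \pi^*g\cdot h_0 + O(\varepsilon)\|\psi_V\|.
$$
The leftover $\pi^*g\cdot h_0$ lies in your vertical summand, is of size comparable to $\|\psi_V\|$, and is \emph{not} an element of $\overline{\mathfrak{h}}$ unless $g$ is constant: Lemma \ref{lem:holpots} only matches the constant-coefficient direction, via $\chi(|t|/\varepsilon)h_0+\lambda\varepsilon^{-\delta}h_{FS}$. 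Consequently your error operator $E_\varepsilon$ contains an $O(1)$ piece, the claimed bound $\|E_\varepsilon\|\leq C\varepsilon$ fails, and the Neumann series does not close. This is exactly the new difficulty caused by the fibre automorphisms, which is absent from Fine's setting and is the reason the paper points to \cite[Lemma 6.5]{dervansektnan20} rather than only to Fine.

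The repair is a three-block decomposition: add the summand of functions of the form $\pi^*g\cdot h_0$ and invert the operator induced on the base coefficient $g$. That operator arises from the horizontal contraction of $\dbar(\pi^*g)\otimes\nabla^{1,0}h_0$ and so appears at order $\varepsilon^{\delta}$; it is invertible on coefficients orthogonal to constants, while the constant-coefficient direction is exactly the global holomorphy potential supplied by Lemma \ref{lem:holpots} (this is where the $\mathbb{C}^*$-discrepancy hypothesis genuinely enters). The corresponding block inverse is $O(\varepsilon^{-\delta})$, still dominated by the $O(\varepsilon^{-2\delta})$ horizontal block, so the final estimate is unchanged. A smaller point: your assertion that $P_{\varepsilon,\kappa}(\pi^*g_0)$ vanishes because it is ``a fibrewise operator applied to a vertically constant function'' is too quick, since $P_{\varepsilon,\kappa}$ is fibrewise only to leading order and a bare $O(\varepsilon)$ remainder would be fatal after multiplication by $\varepsilon^{-2\delta}$; the statement you actually need is that $L_{\varepsilon,\kappa}$ applied to pullbacks is $O(\varepsilon^{2\delta})$ with the stated leading term, which is the content of Lemma \ref{lem:linhorizontal} and should be cited for this step.
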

\begin{proof}
The proof is very similar to analogous results for other fibrations, see \cite[Lemma 6.5, 6.6, 6.7]{fine04}  or \cite[Lemma 6.5]{dervansektnan20}, and we omit the details. The precise rate $\varepsilon^{-2 \delta}$ of the bound comes from Proposition \ref{prop:linearisation}, which shows that $\Psi_{\varepsilon, \kappa}$ is surjective modulo $\overline{\mathfrak{h}}$ at order $\varepsilon^{2 \delta}$.
\end{proof}

We are now ready to perturb to a genuine solution of the extremal equation. The key will be the following Quantitative Inverse Function Theorem.
\begin{theorem}
\label{thm:implicit}
Suppose $\Phi : V \to W$ is a differentiable map of Banach spaces, with surjective differential at $0 \in V$. Let $\Psi$ be a right inverse for $D\Phi_0$. Let 
\begin{itemize}
\item $r'$ be the radius of the closed ball in $V$ where $\Phi - d \Phi$ is Lipschitz, with Lipschitz constant $\frac{1}{2 \| \Psi \|}$;
\item $r = \frac{1}{2 \| \Psi \|}$.
\end{itemize}
Then for all $w \in W$ such that $\| w - \Phi (0) \| < r$, there exists a $v \in V$ with $\| v \| < r'$ such that $\Phi (v) = w.$
\end{theorem}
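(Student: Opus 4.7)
The plan is to recast $\Phi(v)=w$ as a fixed-point equation $v=T(v)$ for a $\tfrac{1}{2}$-contraction of the closed ball $\overline{B}(0,r')\subset V$, and then apply the Banach fixed-point theorem.

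First, I would separate the linear and nonlinear parts of $\Phi$ at $0$: set $N(v) := \Phi(v) - \Phi(0) - d\Phi_0(v)$, so $N(0)=0$ and, by hypothesis, $N$ is Lipschitz on $\overline{B}(0,r')$ with constant $\tfrac{1}{2\|\Psi\|}$. Using $d\Phi_0\circ\Psi = \mathrm{id}_W$, the equation $\Phi(v)=w$ rearranges to $d\Phi_0(v) = (w-\Phi(0)) - N(v)$, and applying the right inverse $\Psi$ suggests the map
$$T(v) := \Psi\bigl(w - \Phi(0)\bigr) - \Psi\bigl(N(v)\bigr).$$
Any fixed point of $T$ then automatically satisfies $\Phi(v)=w$, since $d\Phi_0(T(v)) = (w-\Phi(0)) - N(v)$ forces $\Phi(v) = \Phi(0) + d\Phi_0(v) + N(v) = w$.

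Next, I would verify two properties of $T$ on $\overline{B}(0,r')$. For the contraction estimate, the chain
$$\|T(v_1)-T(v_2)\| = \|\Psi(N(v_1)-N(v_2))\| \leq \|\Psi\|\cdot\tfrac{1}{2\|\Psi\|}\|v_1-v_2\| = \tfrac{1}{2}\|v_1-v_2\|$$
uses exactly the two quantitative inputs: the norm of $\Psi$ and the Lipschitz constant of $N = \Phi - d\Phi$. For the self-map property, combining $T(0)=\Psi(w-\Phi(0))$ with the contraction estimate against $v_2=0$ gives
$$\|T(v)\| \;\leq\; \|T(0)\| + \tfrac{1}{2}\|v\| \;\leq\; \|\Psi\|\cdot\|w-\Phi(0)\| + \tfrac{r'}{2},$$
which is strictly less than $r'$ under the hypothesised bound on $\|w-\Phi(0)\|$.

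Finally, I would invoke the Banach fixed-point theorem on the complete metric space $\overline{B}(0,r')$ to obtain a (unique) fixed point $v$ of $T$; by construction $\Phi(v)=w$ and $\|v\|<r'$. There is no substantive obstacle here, since the theorem is a standard application of the contraction mapping principle; the only subtlety is the careful balancing of the two hypotheses, so that the factor of $\|\Psi\|$ in the inversion cancels exactly against the Lipschitz constant $\tfrac{1}{2\|\Psi\|}$ of the nonlinear tail, producing a contraction constant bounded away from $1$.
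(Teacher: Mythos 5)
Your proof is correct and is the standard contraction-mapping argument for this quantitative inverse function theorem; the paper itself states the result without proof, quoting it as a known tool, so there is no competing argument to compare against, and your route (split off $N=\Phi-\Phi(0)-d\Phi_0$, set $T(v)=\Psi(w-\Phi(0))-\Psi(N(v))$, check $T$ is a $\tfrac12$-contraction of the closed $r'$-ball, apply Banach) is exactly the expected one. One point worth making explicit: your self-map estimate $\|T(v)\|\leq \|\Psi\|\,\|w-\Phi(0)\|+\tfrac{r'}{2}<r'$ closes only if $\|w-\Phi(0)\|<\frac{r'}{2\|\Psi\|}$, i.e.\ you are implicitly reading $r=\frac{r'}{2\|\Psi\|}$ rather than the literal $r=\frac{1}{2\|\Psi\|}$ printed in the statement; the printed value is evidently a typo (with it, the bound $\tfrac12+\tfrac{r'}{2}$ need not be below $r'$), and the paper's own later use in the proof of Theorem \ref{thm:mainepsilon}, where it sets $r=\frac{r'}{2\|Q_{\varepsilon,\kappa}\|}$, confirms the corrected reading. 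With that reading made explicit, your argument is complete.
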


We will this to the operator 
$$
\Phi_{\varepsilon, \kappa} : C^{k+4,\alpha} (\scX)  \times \overline{\mathfrak{h}} \to C^{k,\alpha} (\scX)
$$
given by
\begin{align}
\label{eq:theeqn}
(f,h) \mapsto S(\Omega_{\varepsilon, \kappa} + \ddb f) - h_{\varepsilon, \kappa} - \frac{1}{2} \langle \nabla_{\Omega_{\varepsilon, \kappa}} (h_{\varepsilon, \kappa}), \nabla \phi \rangle .
\end{align}
Notice that the linearisation of $\Phi_{\varepsilon, \kappa}$ is the operator $\Psi_{\varepsilon, \kappa}$ in Proposition \ref{prop:inversebound}. In order to apply Theorem \ref{thm:implicit}, we need to a definite region on which $\Phi= \Phi_{\varepsilon, \kappa} $ is Lipschitz of Lipschitz constant $\frac{1}{2 \| \Psi \|}$. This is provided by the following lemma.
\begin{lemma}
\label{lem:lipschitz}
Let $\mathcal{N}_{\varepsilon, \kappa} = \Phi_{\varepsilon, \kappa} - d \Phi_{\varepsilon, \kappa}$, where $\Phi_{\varepsilon, \kappa} : C^{k+4,\alpha} \times \mathfrak{h} \to C^{k,\alpha}$ is the operator given by Equation \eqref{eq:theeqn}. There are constant $c,C >0$, such that for all $\varepsilon >0$ sufficiently small, if $f_i \in C^{k+4,\alpha} \times \mathfrak{h}$ for $i=1,2$ satisfy $\| f_i \| \leq c$, then 
$$
\| \mathcal{N}_{\varepsilon, \kappa} (f_1) - \mathcal{N}_{\varepsilon, \kappa} (f_2) \| \leq C \left( \| f_1 \| + \| f_2 \| \right) \| f_1 - f_2 \|.
$$
\end{lemma}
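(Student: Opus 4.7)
The plan is to reduce the estimate to a uniform-in-$\varepsilon$ bound on the second Fr\'echet derivative $D^2 \Phi_{\varepsilon,\kappa}$ over a fixed small ball, via the Banach-space mean value theorem applied twice. Setting $f_t = t f_1 + (1-t) f_2$, the usual integral representation gives
$$\mathcal{N}_{\varepsilon,\kappa}(f_1) - \mathcal{N}_{\varepsilon,\kappa}(f_2) = \int_0^1 \bigl( D\Phi_{\varepsilon,\kappa}|_{f_t} - D\Phi_{\varepsilon,\kappa}|_0 \bigr)(f_1 - f_2)\, dt,$$
and a second application bounds the integrand by $\|D^2 \Phi_{\varepsilon,\kappa}\|\cdot \|f_t\|\cdot \|f_1 - f_2\|$. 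The lemma therefore follows once we know
$$\sup_{\|f\|\leq c}\, \|D^2 \Phi_{\varepsilon,\kappa}|_f\| \leq C$$
with $C$ independent of $\varepsilon$.

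The operator $\Phi_{\varepsilon,\kappa}$ splits into the scalar curvature map $f \mapsto S(\Omega_{\varepsilon,\kappa}+\ddb f)$, a constant, and a term bilinear in $(f,h)$; the bilinear piece is immediately quadratic once we have uniform control of $\nabla_{\Omega_{\varepsilon,\kappa}} h_{\varepsilon,\kappa}$. For the scalar curvature term, $D^2 S$ is a rational expression in the coefficients of $\Omega_{\varepsilon,\kappa}+\ddb f$ and their derivatives up to order four. On a sufficiently small $C^{k+4,\alpha}$ ball the perturbed metric is elliptically equivalent to $\Omega_{\varepsilon,\kappa}$, so the pointwise estimate reduces to a uniform-in-$\varepsilon$ bound on $\Omega_{\varepsilon,\kappa}$ and its inverse in the norm used to measure $f$.

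The main obstacle is establishing this uniform control of the background. The horizontal rescaling by $\lambda\varepsilon^{-\delta}$ and the pulled-back corrections $\sum \varepsilon^{j-\delta}\pi^*\phi_j$ are innocuous once we measure with respect to $\Omega_{\varepsilon,\kappa}$ itself (equivalently, use the rescaled horizontal coordinate on $\mathbb{P}^1$); the vertical corrections $\sum \varepsilon^j f_j$ are built from fixed smooth functions and so contribute bounded terms. The delicate region is the annulus $\{\varepsilon/2 \leq |t| \leq 3\varepsilon/4\}$, where derivatives of $\chi(|t|/\varepsilon)$ carry inverse powers of $\varepsilon$; but these are paired with $\phi_t - \phi_\varepsilon$ and its derivatives, which by Proposition \ref{prop:potentialbound} are $O(\varepsilon)$ in $C^{4,\alpha}$, and an elementary bookkeeping shows the combined contributions are uniformly bounded up to the orders of derivatives required. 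With this uniformity in hand, the mean-value argument closes. The mechanics follow \cite[Lemmas 6.6--6.7]{fine04} and \cite[Lemma 6.5]{dervansektnan20} verbatim, the new input being the pointwise bounds on the glued background from Section \ref{sec:metricsextension}.
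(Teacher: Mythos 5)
Your argument is correct and follows essentially the same route as the paper: a mean value theorem reduction, with the key input being uniform-in-$\varepsilon$ control of how the linearisation changes under small potential perturbations of the background, which the paper packages as Proposition \ref{prop:linearisationchange2} (your uniform bound on $D^2\Phi_{\varepsilon,\kappa}$ is equivalent to that statement, and your discussion of the gluing annulus via Proposition \ref{prop:potentialbound} mirrors the uniform metric-equivalence argument behind Proposition \ref{prop:linearisationchange}).
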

The proof of Lemma \ref{lem:lipschitz} is a direct consequence of the Mean Value Theorem, using that the $\Psi_{\varepsilon, \kappa}$ are bounded, independently of $\varepsilon$, and the following result, which is a global version of Proposition \ref{prop:linearisationchange}, and whose proof is similar.
\begin{proposition}
\label{prop:linearisationchange2}
For each $k,\alpha$, there exists $c, C>0$ such that if $\| \phi \|_{C^{k+4,\alpha}} \leq c$, then 
$$
\| L_{\Omega_{\varepsilon} + \ddb \phi} (f) - L_{\Omega_{\varepsilon}}  (f) \|_{C^{k,\alpha}} \leq C \| \phi \|_{C^{k+4,\alpha}} \| f \|_{C^{k+4,\alpha}}.
$$
\end{proposition}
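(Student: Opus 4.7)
My plan is to reduce this to a mean value theorem argument, exactly in the spirit of Proposition \ref{prop:linearisationchange}, with the only new ingredient being uniform-in-$\varepsilon$ control of the relevant metric quantities of $\Omega_\varepsilon$ on the total space.

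First I would establish that when we write the Lichnerowicz operator $L_{\Omega_\varepsilon}$ in local coordinates with respect to a fixed background Riemannian metric on $\scX$, its coefficients are bounded in $C^{k,\alpha}$ uniformly in $\varepsilon$, provided $\delta$ is taken large enough (which is harmless, as we may ultimately work with $\delta = 2$). The key observation is that although $\Omega_\varepsilon$ itself blows up in the horizontal direction like $\varepsilon^{-\delta}$ (from the $\lambda\varepsilon^{-\delta}\pi^{\ast}\omega_{FS}$ contribution), and its horizontal derivatives are worse still due to the cut-off $\chi(|t|/\varepsilon)$ appearing in $\omega_\varepsilon$, its inverse decays horizontally like $\varepsilon^{\delta}$ while remaining uniformly bounded vertically. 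In the formulas for $\Omega_\varepsilon^{-1}$, $\Ric(\Omega_\varepsilon)$ and $S(\Omega_\varepsilon)$, the horizontal derivatives of the metric (carrying negative powers of $\varepsilon$) always couple with horizontal components of the inverse metric (carrying positive powers of $\varepsilon$), and these pairings are bounded for $\delta$ sufficiently large. This is exactly the kind of bookkeeping already carried out implicitly in Proposition \ref{prop:firstapproxerror} and Proposition \ref{prop:linearisation}, and gives uniform $C^{k,\alpha}$-bounds on the coefficients of $L_{\Omega_\varepsilon}$.

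Given such uniform bounds, for $\|\phi\|_{C^{k+4,\alpha}} \leq c$ with $c>0$ sufficiently small, a Neumann series expansion around $\Omega_\varepsilon^{-1}$ produces analogous uniform bounds on $(\Omega_\varepsilon + t \ddb \phi)^{-1}$ and its derivatives, uniformly in $t \in [0,1]$, together with smooth dependence on $t$. The mean value theorem then gives
\begin{equation*}
L_{\Omega_\varepsilon + \ddb \phi}(f) - L_{\Omega_\varepsilon}(f) = \int_0^1 \frac{d}{dt}\,L_{\Omega_\varepsilon + t\ddb \phi}(f)\,dt,
\end{equation*}
and the integrand is a polynomial expression in the inverse metric, its first two derivatives (through $\Ric$ and $S$), and in $f$ and its derivatives up to order four, in which exactly one factor of $\ddb\phi$ (or a derivative of it) is extracted by the $\partial_t$ differentiation. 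Combining the uniform bounds on the metric factors with this single linear occurrence of $\ddb \phi$ yields
\begin{equation*}
\|L_{\Omega_\varepsilon + \ddb \phi}(f) - L_{\Omega_\varepsilon}(f)\|_{C^{k,\alpha}} \leq C\,\|\phi\|_{C^{k+4,\alpha}}\|f\|_{C^{k+4,\alpha}},
\end{equation*}
with $C$ independent of $\varepsilon$, as claimed.

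The main obstacle is the bookkeeping of negative powers of $\varepsilon$ (coming from the derivatives of $\chi(|t|/\varepsilon)$ and from the horizontal components of $\omega_\varepsilon$) against positive powers of $\varepsilon$ (from $\Omega_\varepsilon^{-1}$), needed to obtain genuinely $\varepsilon$-uniform bounds on the coefficients of $L_{\Omega_\varepsilon}$. Once this is checked, the remainder of the argument is standard and parallels the fibrewise case of Proposition \ref{prop:linearisationchange}.
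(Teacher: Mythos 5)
Your proposal is correct and follows essentially the same route as the paper: the paper proves the fibrewise version (Proposition \ref{prop:linearisationchange}) by combining a uniform bound on the metric with the standard expansion of the linearised operator as in \cite[Lemma 8.13]{szekelyhidi14book}, and simply declares the global version "similar", which is exactly your mean-value-theorem argument once the metric quantities of $\Omega_{\varepsilon}$ are controlled uniformly in $\varepsilon$. The $\varepsilon$-power bookkeeping you describe (negative powers from the cut-off and the horizontal part of $\omega_{\varepsilon}$ absorbed by the $\varepsilon^{\delta}$ decay of the horizontal inverse metric) is precisely the detail the paper leaves implicit, so your write-up fills in rather than deviates from the intended proof.
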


We are now ready to prove our main result. The remaining argument follows exactly similar arguments in e.g. \cite{fine04,dervansektnan21}. The statement below is just rephrasing Theorem \ref{thm:main}, using the parameter $\varepsilon$ instead of $k$.
\begin{theorem}
\label{thm:mainepsilon}
For all $\varepsilon >0$ sufficiently small, there exists an extremal metric $\widetilde{\Omega}_{\varepsilon} \in [\Omega_{\varepsilon}]$. 
\end{theorem}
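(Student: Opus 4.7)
The plan is to apply the Quantitative Inverse Function Theorem (Theorem \ref{thm:implicit}) to the map $\Phi_{\varepsilon,\kappa}$ defined by Equation \eqref{eq:theeqn}, with base point $(0,0)$, for a suitable choice of $\kappa$. A solution $(f,h)$ to $\Phi_{\varepsilon,\kappa}(f,h)=0$ produces a K\"ahler form $\Omega_{\varepsilon,\kappa}+\ddb f$ in the class $[\Omega_{\varepsilon}]$ whose scalar curvature equals the holomorphy potential $h_{\varepsilon,\kappa}+h+\tfrac{1}{2}\langle \nabla(h_{\varepsilon,\kappa}+h),\nabla f\rangle$, which is exactly the extremal condition in the torus-invariant setting from Section \ref{sec:background}.

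To set up the application, I collect the three ingredients and match their $\varepsilon$-rates. First, by Proposition \ref{prop:approxsoln}, the approximate solution $\Omega_{\varepsilon,\kappa}$ satisfies
\begin{equation*}
\| \Phi_{\varepsilon,\kappa}(0,0) \|_{C^{k,\alpha}} = \| S(\Omega_{\varepsilon,\kappa}) - h_{\varepsilon,\kappa}\|_{C^{k,\alpha}} = O(\varepsilon^{\kappa+1}).
\end{equation*}
Second, by Proposition \ref{prop:inversebound}, the linearisation $\Psi_{\varepsilon,\kappa}=d\Phi_{\varepsilon,\kappa}|_{(0,0)}$ admits a right inverse $Q_{\varepsilon,\kappa}$ with $\|Q_{\varepsilon,\kappa}\| \leq C\varepsilon^{-2\delta}$, so the inverse-function radius is
\begin{equation*}
r = \frac{1}{2\|Q_{\varepsilon,\kappa}\|} \geq c \varepsilon^{2\delta}.
\end{equation*}
Third, by Lemma \ref{lem:lipschitz}, the nonlinear remainder $\mathcal{N}_{\varepsilon,\kappa}=\Phi_{\varepsilon,\kappa}-d\Phi_{\varepsilon,\kappa}$ is Lipschitz with constant $C(\|f_1\|+\|f_2\|)$ on balls of definite size. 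Hence the ball on which $\mathcal{N}_{\varepsilon,\kappa}$ is Lipschitz of constant $(2\|Q_{\varepsilon,\kappa}\|)^{-1}$ has radius at least $r' \geq c' \varepsilon^{2\delta}$, uniformly in $\varepsilon$.

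The main step is to choose $\kappa$ so that the error is dominated by the inverse-function radius. Concretely, taking any $\kappa$ with $\kappa+1 > 2\delta$ (for instance $\kappa = 2\delta$, which by Proposition \ref{prop:approxsoln} is admissible) we obtain
\begin{equation*}
\|\Phi_{\varepsilon,\kappa}(0,0)\| = O(\varepsilon^{\kappa+1}) = o(\varepsilon^{2\delta}) < r
\end{equation*}
for all $\varepsilon$ sufficiently small. Theorem \ref{thm:implicit} applied to $w=0$ then yields $(f_{\varepsilon},h_{\varepsilon})\in C^{k+4,\alpha}(\scX)\times \overline{\mathfrak{h}}$ with $\|(f_{\varepsilon},h_{\varepsilon})\|<r'$ solving $\Phi_{\varepsilon,\kappa}(f_{\varepsilon},h_{\varepsilon})=0$. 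Since $\|f_{\varepsilon}\|_{C^{k+4,\alpha}}\to 0$, the perturbed form $\widetilde{\Omega}_{\varepsilon} := \Omega_{\varepsilon,\kappa}+\ddb f_{\varepsilon}$ remains K\"ahler in $[\Omega_{\varepsilon}]$ for small $\varepsilon$, and the defining equation says precisely that its scalar curvature is a holomorphy potential, i.e. $\widetilde{\Omega}_{\varepsilon}$ is extremal. Elliptic regularity for the fully nonlinear scalar curvature equation upgrades $f_\varepsilon$ to a smooth solution.

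The main obstacle to watch out for is the precise matching of the three $\varepsilon$-scales above: the inverse norm blows up like $\varepsilon^{-2\delta}$ because $\Psi_{\varepsilon,\kappa}$ is only surjective modulo $\overline{\mathfrak{h}}$ at order $\varepsilon^{2\delta}$ in the horizontal direction, so one needs the improved approximate solutions of Proposition \ref{prop:approxsoln} well past the order $\varepsilon^{\delta}$ at which the horizontal error first appears. The condition $\delta>1$ in Proposition \ref{prop:inversebound} (and the possibility of taking $\delta=2$ in the construction of the $\Omega_{\varepsilon,\kappa}$) is precisely what makes such a choice of $\kappa$ possible. Given this, everything else is routine Banach-space analysis following the pattern of \cite{fine04,dervansektnan21}.
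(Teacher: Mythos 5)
Your overall route is the same as the paper's: combine Proposition \ref{prop:approxsoln} (error $O(\varepsilon^{\kappa+1})$), Proposition \ref{prop:inversebound} (right inverse of norm $O(\varepsilon^{-2\delta})$) and Lemma \ref{lem:lipschitz} (quadratic nonlinearity), and feed these into the quantitative inverse function theorem. However, your rate matching --- which you yourself identify as the crucial point --- is wrong. You take the attainable-target radius to be $r = \tfrac{1}{2\|Q_{\varepsilon,\kappa}\|} \sim \varepsilon^{2\delta}$ and conclude that $\kappa+1 > 2\delta$ (e.g.\ $\kappa = 2\delta$) suffices. But in the standard quantitative inverse function theorem the radius of the ball of attainable targets is $r = \tfrac{r'}{2\|Q_{\varepsilon,\kappa}\|}$, where $r'$ is the radius on which the nonlinear remainder has Lipschitz constant $\tfrac{1}{2\|Q_{\varepsilon,\kappa}\|}$: the Newton/contraction map $v \mapsto Q(w - \Phi(0) - \mathcal{N}(v))$ must send the ball of radius $r'$ into itself, which forces $\|Q\|\,\|w-\Phi(0)\| \lesssim r'$. (The displayed ``$r = \tfrac{1}{2\|\Psi\|}$'' in Theorem \ref{thm:implicit} is a typo that you have taken at face value; the paper's own proof of Theorem \ref{thm:mainepsilon} uses $r = \tfrac{r'}{2\|Q_{\varepsilon,\kappa}\|}$.) Since, as you correctly note, $r' \sim \varepsilon^{2\delta}$, the true radius is $r \sim \varepsilon^{4\delta}$, and the requirement is $\kappa + 1 > 4\delta$ (the paper asks for $\kappa > 4\delta$). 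With your choice $\kappa = 2\delta$ and, say, $\delta = 2$, the error $O(\varepsilon^{5})$ vastly exceeds the radius $\sim \varepsilon^{8}$, so the theorem cannot be applied and the argument breaks at exactly the step you flagged.

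The gap is easily repaired, because Proposition \ref{prop:approxsoln} produces approximate solutions to arbitrary order: simply choose $\kappa > 4\delta$ (with $\delta = 2$, say $\kappa \geq 9$), and the rest of your argument --- solvability of $\Phi_{\varepsilon,\kappa}(f,h)=0$ with $\|f\| < r' \to 0$, positivity of $\Omega_{\varepsilon,\kappa} + \ddb f$, identification of the equation with the extremal condition in the torus-invariant setting, and elliptic regularity --- goes through verbatim and coincides with the paper's proof. So the defect is not in the strategy but in the quantitative bookkeeping that the strategy hinges on.
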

\begin{proof}
Lemma \ref{lem:lipschitz} implies that $\mathcal{N}_{\varepsilon,\kappa}$ is Lipschitz on the ball of radius $\varrho$, with Lipschitz constant $c \varrho$, for all $\varrho$ sufficiently small. Letting $r'$ be as in Theorem \ref{thm:implicit} for the extremal operator, we therefore have that $r' \geq c \varepsilon^{2 \delta}$, for some constant $c>0$, by Proposition \ref{prop:inversebound}. Letting $r= \frac{r'}{2 \| Q_{\varepsilon, \kappa} \|}$, we then have that 
$$
r \geq C \varepsilon^{4\delta},
$$
for some constant $C> 0$. In particular, Theorem \ref{thm:implicit} implies that we can perturb the scalar curvature, up to a holomorphy potential, to anything in the ball of radius $C  \varepsilon^{4 \delta}$ about $S(\Omega_{\varepsilon,\kappa})$. Since $C$ depends only on $\kappa,$ not $\varepsilon$, we see that if $\kappa$ is chosen sufficiently large (we need $\kappa > 4 \delta$), then we can solve the extremal equation, since $S(\Omega_{\varepsilon,\kappa})$ is extremal to order $\varepsilon^{\kappa+1}$.
\end{proof}

\begin{remark}
\label{rem:closesolns}
In fact, as used e.g. in \cite[Remark 3.8]{dervansektnan21}, the actual solutions can be ensured to be an $O(\varepsilon^{\kappa - 2 \delta})$ perturbation of $\Omega_{\varepsilon, \kappa}$. In particular, we can, for any $\kappa$, choose a $\kappa'>\kappa$ such that the actual solutions produced from $\Omega_{\varepsilon,\kappa'}$ agree with $\Omega_{\varepsilon, \kappa'}$ to order $\varepsilon^{\kappa}$. In particular, for any desired $k$ and $\alpha$, and for all $\varepsilon$ sufficiently small, the solution will agree with 
$$
\Omega_{\varepsilon} = \omega_{\varepsilon} + \lambda \varepsilon^{-\delta} \pi^* \omega_{FS}.
$$ 
in $C^{k,\alpha}$, up to terms that are decaying in $\varepsilon$ in both the horizontal and vertical direction. 
\end{remark}

\subsection{Larger automorphism groups}

In this section, we detail how to extend the above result to a more general setting, where we instead allow a difference of a $\mathbb{C}^*$ in the automorphism groups of the general fibre and the central fibre. More precisely, we will assume the following
\begin{definition}
\label{def:automassumption}
Suppose that a maximal torus $T_0$ for $\scX_0$ acts on the Kuranishi slice in such a way that the stabiliser $T$ of the point corresponding to $X$ is a  torus inside the reduced automorphism group of $X$ and that $T_0 = T \times \mathbb{C}^*$, where the $\mathbb{C}^*$ component is the one generating the test configuration. We then say that the automorphism group of $(X,L)$ has a \emph{$\mathbb{C}^*$ discrepancy against the automorphism group of the cscK central fibre $(\scX_0, \mathcal{L}_0)$}.
\end{definition}
A different way of thinking about this, which we will prove below, is that this condition means that there is an action of the full maximal torus $T$ of the central fibre, on the total space of the degeneration. In general, the test configuration is only preserved by a $\mathbb{C}^*$ subgroup of $T$, but the $\mathbb{C}^*$ discrepancy condition says that the maximal torus of the central fibre lifts to a torus inside the automorphism group of the total space of the test configuration.

Under the above condition, we can extend our main result.
\begin{theorem}
\label{thm:mainepsilon2}
Suppose the automorphism group of $(X,L)$ has a $\mathbb{C}^*$ discrepancy against the automorphism group of the cscK central fibre $(\scX_0, \mathcal{L}_0)$. Then, for all $\varepsilon >0$ sufficiently small, there exists an extremal metric $\widetilde{\Omega}_{\varepsilon} \in [\Omega_{\varepsilon}]$. 
\end{theorem}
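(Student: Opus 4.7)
The strategy is to redo the entire construction leading to Theorem \ref{thm:mainepsilon} equivariantly with respect to the full maximal torus $T_0$ of $\scX_0$, accounting for the enlarged space of holomorphy potentials. The first step is to verify that the $\mathbb{C}^*$ discrepancy condition implies that $T_0$ acts on the total space $\scX$ of the compactified test configuration, not only on the central fibre. By hypothesis, $T_0$ acts on the Kuranishi slice $V$, and the stabiliser at the point representing $(X,L)$ is $T$, with $T_0/T \cong \mathbb{C}^*$ generating the test configuration direction. Since the local $\mathbb{C}^*$-orbit near $0 \in V$ is the one giving rise to the family $(\scX,\mathcal{L}) \to \Delta$, the quotient $T_0/(\text{orbit direction})$ yields an honest $T$-action preserving fibres, and the equivariant gluing of $\scX|_{\Delta \setminus \{0\}}$ to $X \times \mathbb{C}$ (with the product $T$-action on the first factor) extends this to a global $T_0$-action on $\scX$.

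Having the $T_0$-action, I would choose the cscK metric on $\scX_0$ to be $T_0$-invariant (possible by standard averaging in Kuranishi theory), making the Kuranishi family equivariant. The diffeomorphisms $f_t$ producing $\alpha_t = f_t^* \omega$ can then be chosen $T$-equivariant, and the potentials $\phi_t$ taken $T$-invariant, so that the relatively K\"ahler metric $\omega_\varepsilon$ and thus $\Omega_\varepsilon = \omega_\varepsilon + \lambda \varepsilon^{-\delta} \pi^* \omega_{FS}$ are $T_0$-invariant. All statements of Section \ref{sec:metricsextension} and the expansions of Propositions \ref{prop:firstapproxerror} and \ref{prop:linearisation} go through unchanged, as they are local metric computations unaffected by symmetry. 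The only modification is that Lemma \ref{lem:holpots} should be extended: the holomorphy potentials for the full $T_0$-action with respect to $\Omega_\varepsilon$ are, for the original $\mathbb{C}^*$-factor, as before, while for each generator of $T$ the potential is simply the corresponding potential on the central fibre extended in a $T$-invariant way (since $T$ preserves each fibre and the $f_t$ are $T$-equivariant).

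With these preparations the inductive improvement in Proposition \ref{prop:approxsoln} should be repeated inside the $T_0$-invariant function spaces. The critical mapping property is that the Lichnerowicz operator $L_\omega$ on $(\scX_0,\omega)$, restricted to $T_0$-invariant functions, is surjective modulo its cokernel, which is now the full space $\overline{\mathfrak{h}}_{T_0}$ of $T_0$-invariant real holomorphy potentials on $\scX_0$. One extends the holomorphy potential $h_{\varepsilon,\kappa}$ in the approximate extremal equation so that it lies in the space of potentials for $T_0$-action on $(\scX,\Omega_{\varepsilon,\kappa})$, using the lift established in the first step. The horizontal corrections pulled back from $\mathbb{P}^1$ remain the same because $\omega_{FS}$ on $\mathbb{P}^1$ is only affected by the $\mathbb{C}^*$-factor.

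The perturbation step then proceeds as in Proposition \ref{prop:inversebound}: the linearised operator $\Psi_{\varepsilon,\kappa}$, restricted to $T_0$-invariant functions, is surjective modulo $\overline{\mathfrak{h}}_{T_0}(\Omega_{\varepsilon,\kappa})$ with right-inverse norm bounded by $C\varepsilon^{-2\delta}$, since the cokernel of the leading-order operator is precisely $\overline{\mathfrak{h}}_{T_0}$ on the central fibre. Lemma \ref{lem:lipschitz} and Proposition \ref{prop:linearisationchange2} remain valid verbatim. Applying the quantitative implicit function theorem (Theorem \ref{thm:implicit}) in the $T_0$-invariant setting produces, for $\kappa$ sufficiently large and $\varepsilon$ sufficiently small, a $T_0$-invariant K\"ahler potential whose perturbed metric satisfies the extremal equation, proving Theorem \ref{thm:mainepsilon2}. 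The main obstacle is the first step, namely confirming the global lift of the $T$-action to $\scX$ from the $\mathbb{C}^*$ discrepancy hypothesis; once this is in place, the rest of the argument is a straightforward $T_0$-equivariant repetition of the previous construction.
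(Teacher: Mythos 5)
Your proposal follows essentially the same route as the paper: lift the extra torus $T$ to the total space $\scX$ using the $T$-equivariance of the gluing maps $f_t$ (equivariant Kuranishi theory plus the fact that $T$ commutes with the $\mathbb{C}^*$ generating the degeneration), observe that the enlarged cokernel of the leading-order (fibrewise Lichnerowicz) operator consists precisely of potentials of global holomorphic vector fields on $\scX$, and rerun the inductive improvement of Proposition \ref{prop:approxsoln} and the quantitative inverse function theorem with the larger space $\overline{\mathfrak{h}}$ and the same $\varepsilon^{-2\delta}$ bound. The only slight imprecision is that the potential of a $T$-generator with respect to $\Omega_{\varepsilon}$ is not literally the central-fibre potential $h$ extended trivially, but $h + \tfrac{1}{2}\langle \nabla h, \nabla \psi_{\varepsilon} \rangle$ as in the paper's Lemma \ref{lem:holpots2}; this standard change-of-potential correction is absorbed by your scheme in any case.
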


The main new point is to understand the holomorphy potentials on $\scX$ that arise from vector fields on $\scX_0$ that extend to the general fibres, similarly to Lemma \ref{lem:holpots} for the ones that do not have this property. 

We first explain how the action of $T$ extends to the test configuration. The manifold $\scX$ can be seen as the gluing of $U_1 = M \times \Delta$ and $U_2 = M \times \mathbb{C}$ via 
$$
(m, t) \sim (f_t (m), \frac{1}{t}),
$$
where $f_t : M \to M$ is the diffeomorphism such that $J_t = f_t^* (J_1)$, see Section \ref{sec:metricsextension}. Now, the action of $T$ acts trivially on the second component in $U_1$ and commutes with the $\mathbb{C}^*$ action. We therefore have that, if $\lambda \cdot $ denotes the action of some element $\lambda \in T$, then
\begin{align*}
\lambda \cdot (m, t) =& (\lambda \cdot m, t) \\
\sim & ( f_t (\lambda \cdot m), \frac{1}{t}) \\
=& ( \lambda \cdot f_t (m),  \frac{1}{t})\\
=& \lambda \cdot ( f_t (m),  \frac{1}{t}).
\end{align*}
Thus, we can define the action to be the $T$-action on $M$ in either chart, giving a well-defined $T$-action on $\scX$. As the action is holomorphic in the first chart from Kuranishi theory and holomorphic in the second chart by the assumption that the $T$ action is holomorphic on $(M, J_1)$, we have that this action is holomorphic on $\scX$.

We can now consider the holomorphy potentials for this action.
\begin{lemma}
\label{lem:holpots2}
Let $h$ be a holomorphy potential in $\scX_0$ with respect to $\omega$, whose associated vector field lies in the image of the Lie algebra of $T$ inside the vector fields on $M$. Then a holomorphy potential for the corresponding vector field on $\scX$ with respect to $\Omega_{\varepsilon}$ is given by
$$
h_{\varepsilon} = h +  \frac{1}{2}  \langle \nabla h , \nabla \psi_{\varepsilon} \rangle,
$$
where 
$$
\psi_{\varepsilon} = \chi \left( \frac{|t|}{\varepsilon} \right) \phi_t + (1-\chi \left( \frac{|t|}{\varepsilon} \right) ) \phi_{\varepsilon} 
$$
is the potential of $\omega_{\varepsilon}$ with respect to $\omega$. 
\end{lemma}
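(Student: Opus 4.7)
The plan is to reduce the identification of $h_{\varepsilon}$ to a computation involving $\omega_{\varepsilon}$ alone, and then apply the standard transformation formula for holomorphy potentials recalled in Section \ref{sec:background}.

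First, I would observe that the generator $\xi$ of the $1$-parameter subgroup associated to $h$ is vertical: by the extension of the $T$-action to $\scX$ established immediately above the lemma, $T$ acts trivially on the base $\mathbb{P}^{1}$. Consequently $\iota_{\xi} \pi^{*} \omega_{FS} = 0$, and therefore $\iota_{\xi} \Omega_{\varepsilon} = \iota_{\xi} \omega_{\varepsilon}$. It thus suffices to produce a Hamiltonian for $\xi$ with respect to $\omega_{\varepsilon}$.

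Next, I would pass to the chart $F : (M, J) \times \mathbb{C}^{*} \to \scX|_{\pi^{-1}(\mathbb{C}^{*})}$ used in Section \ref{sec:metricsextension}. There, by construction, $F^{*} \omega_{\varepsilon} = \omega + \ddb \psi_{\varepsilon}$, where $\omega$ is pulled back from $M$ under the first projection, and where $F^{*} \xi$ is the generator of the $T$-action on $M$ extended trivially in the $\mathbb{C}^{*}$-direction. In particular $\iota_{\xi} \omega = dh$ holds globally on this chart (both sides being pulled back from $M$). Because $\psi_{\varepsilon}$ must be shown to be $T$-invariant, I would now invoke the $T_{0}$-equivariance of the Kuranishi construction recalled in Section \ref{sec:kuranishi}: since $T \subset T_{0}$ fixes the point of $V$ corresponding to $(X,L)$, the Kuranishi map, the diffeomorphisms $f_{t}$, and the potentials $\phi_{t}$ are all $T$-invariant; the cut-off $\chi(|t|/\varepsilon)$ depends only on $|t|$, on which $T$ acts trivially. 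Hence $\psi_{\varepsilon}$ is $T$-invariant.

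At this point I would apply the formula stated in Section \ref{sec:background}: for a torus-invariant function $\psi$ and a holomorphy potential $h$ for $\xi$ with respect to a background torus-invariant form $\omega$, a potential for $\xi$ with respect to $\omega + \ddb \psi$ is $h + \tfrac{1}{2} \langle \nabla h, \nabla \psi \rangle$. The only subtlety is that the background form $\omega$ is here only relatively K\"ahler, not K\"ahler, on the total space of the chart; however, the proof of the formula only uses that $\omega$ is closed and that $\iota_{\xi} \omega = dh$, together with $T$-invariance of $\psi$ (so that $\iota_{\xi} \ddb \psi$ equals $d$ of an explicit expression in $\xi$, $J$ and $\psi$, which pairs with $dh$ to give precisely $\tfrac{1}{2} d \langle \nabla h, \nabla \psi \rangle_{\omega}$ when $h$ and $\psi$ are vertical-invariant). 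Applying the formula with $\psi = \psi_{\varepsilon}$ yields that $h + \tfrac{1}{2} \langle \nabla h, \nabla \psi_{\varepsilon} \rangle$ is a Hamiltonian for $\xi$ with respect to $\omega_{\varepsilon}$ throughout the chart.

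Finally, I would extend the formula to all of $\scX$. Outside the support of $\chi$ (that is, for $|t| \geq \tfrac{3\varepsilon}{4}$), we have $\psi_{\varepsilon} = \phi_{\varepsilon}$, which is independent of $t$ and pulls back from $M$ via the trivialisation. The same formula therefore extends smoothly across the gluing used to produce the compactified test configuration, because both $\omega_{\varepsilon}$ and the $T$-action are constructed to be compatible with this gluing. Combined with the reduction from $\Omega_{\varepsilon}$ to $\omega_{\varepsilon}$ carried out in the first step, this gives $h_{\varepsilon} = h + \tfrac{1}{2} \langle \nabla h, \nabla \psi_{\varepsilon} \rangle$ as a global holomorphy potential, as required. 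The main obstacle I anticipate is not a difficult computation but rather the bookkeeping needed to justify that the standard potential-transformation formula applies verbatim to a form which is only relatively K\"ahler when paired with the vertical vector field $\xi$; once verticality of $\xi$ and $T$-invariance of $\psi_{\varepsilon}$ are in hand, the rest is routine.
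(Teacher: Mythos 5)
Your proposal is correct and follows essentially the same route as the paper: the paper likewise notes that $h$ is a (fibrewise) potential for the vector field with respect to $\omega$ in the trivialisation where $\omega_{\varepsilon} = \omega + \ddb \psi_{\varepsilon}$, applies the standard change-of-potential formula to get $h + \tfrac{1}{2}\langle \nabla h, \nabla \psi_{\varepsilon}\rangle$, and concludes it is also a potential for $\Omega_{\varepsilon}$ because the $T$-action covers the trivial action on $\mathbb{P}^1$. Your additional remarks on the $T$-invariance of $\psi_{\varepsilon}$ (via equivariance of the Kuranishi construction) and on the background form being only relatively K\"ahler are sensible elaborations of points the paper leaves implicit, not a different argument.
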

\begin{proof} 
On $X=(M,J_1)$, $\omega$ is a K\"ahler form, and so on $U_2 = X \times \mathbb{C}$, $h$ is a holomorphy potential for the vector field with respect to $\omega$. Thus, to obtain the expression for the holomorphy potential with respect to $\omega_{\varepsilon}$ fibrewise, we can simply use the standard expression for the change in holomorphy potentials
$$
h_{\varepsilon} = h + \frac{1}{2}  \langle \nabla h , \nabla \psi_{\varepsilon} \rangle,
$$
where $\psi_{\varepsilon} $ is the potential of $\omega_{\varepsilon}$ with respect to $\omega$. This is also the potential with respect to $\Omega_{\varepsilon}$, since the $T$-action covers the trivial action on $\mathbb{P}^1$.
\end{proof}

With this in place, we can now explain how to adapt the argument to extend the results of Theorem \ref{thm:mainepsilon} to Theorem \ref{thm:mainepsilon2}. The linearised operator on the central fibre now has a larger cokernel, meaning that in Proposition \ref{prop:linearisation}, we now got a bigger cokernel in the leading order term
$$
P_{\varepsilon} = L_{\omega} + O(\varepsilon).
$$
But this cokernel $\overline{\mathfrak{h}}$ still consists of the holomorphy potentials on the total space $\scX$, the only change is that there are more of them. One can then in a very similar manner as the previous case show that the operator $\Psi_{\varepsilon, \kappa}$ given by
$$
(\phi, h) \mapsto L_{\varepsilon, \kappa} (\phi) - h_{\varepsilon, \kappa} - \frac{1}{2} \langle \nabla_{\Omega_{\varepsilon, \kappa}} (h_{\varepsilon, \kappa}), \nabla \phi \rangle,
$$
is surjective, and the bounds of Proposition \ref{prop:inversebound} for the right inverse go through in exactly the same manner. The only difference is that the space $\overline{\mathfrak{h}}$ is now larger.

When adapting the argument of Proposition \ref{prop:approxsoln}, the properties of the horizontal part remains the same, and the argument is exactly the same there. When taking care of the vertical errors, there is now a bigger cokernel in the leading order term in the expansion of the linearised operator. But  these additional type of cokernel elements are precisely the functions of the form given by Lemma \ref{lem:holpots2}, which are global holomorphy potentials on $\scX$. These functions are still $O(1)$ in $\varepsilon$, and do not introduce any horizontal terms at an earlier level than the holomorphy potential of Lemma \ref{lem:holpots} does. Thus we can remove vertical errors without introducing horizontal terms at too early an order in $\varepsilon$. 

The rest of the proof now goes through in the same manner. We can create an approximate solution that solves the extremal equation to any order we would like, and the operator norm of the inverse is of the same order in $\varepsilon$ as the previous case. Thus we can apply the exact same argument as in the proof of Theorem \ref{thm:mainepsilon}, to create an actual extremal metric using the quantitative implicit function theorem, Theorem \ref{thm:implicit}. This completes the proof of Theorem \ref{thm:mainepsilon2}.

\begin{remark}
\label{rem:outlook}
Treating $\delta$ as a parameter is not strictly speaking necessary in our construction. One could e.g. choose $\delta = 2$. On the other hand, under more general assumptions on the relationship between the automorphism group of the central fibre and the general fibre, constructions like ours should be possible. This is the main reason we have treated $\delta$ as a parameter. 

These constructions should be obstructed, as there is no longer an action of the full maximal torus $T_0$ of the central fibre, on the total space of the test configuration. It is likely that there one needs a more specific choice of $\delta$ to make the construction work. For example, if one would need to pick $\delta$ to be minimal, i.e. $\delta = 1$, in the construction, one would potentially get a different equation than the cscK equation for the metric on $\mathbb{P}^1$ by looking at the horizontal term at this order -- this would likely be a twisted cscK equation, with twisting term coming from the pushforward of certain terms from the total space of the test configuration. This would also lead to obstructions at order $\varepsilon$ in the vertical direction, that one cannot deal with using the linearised operator.

In fact, at the time of writing, a general framework for attacking these questions on fibrations with only semistable fibres is being developed by Annamaria Ortu (\cite{ortu21}). Her work studies the analogue of the optimal symplectic connection equation in this setting, and constructs extremal metrics on the total space of such fibrations, when this equation and a suitable equation on the base can be solved. This requires one to see the total space of the fibration as a deformation of a cscK fibration, which is very different from the approach taken in our specific case. 

It seems possible that our work could fit into her framework. The key to this would be to show that the test configuration we consider can be seen as a deformation of a product test configuration over $\mathbb{P}^1$ for $(\scX_0, \mathcal{L}_0)$. The likely choice is to use the product test configuration produced from the $\mathbb{C}^*$ action on $\scX_0$, that we used to define the $\mathbb{C}^*$ action on the initial local test configuration we have considered. This means that we would \emph{not} see this as a deformation of $\scX_0 \times \mathbb{P}^1$, in general. Showing that indeed this is the case may be the better avenue to pursue in order to extend our results to cases with more general automorphism groups.
\end{remark}

\section{Metric limits of the adiabatic extremal metrics}
\label{sec:GH}

It is interesting to analyze from the metric viewpoint what happens to the extremal metrics $\widetilde{\Omega}_{\varepsilon}$ when $\varepsilon\rightarrow 0$. 

\begin{proposition}
	For any point $p\in \mathcal{X}$, the metrics $\widetilde{\Omega}_{\varepsilon}$ converge as $\varepsilon\rightarrow 0$ in the pointed Gromov-Hausdorff sense to the product $X_0 \times \mathbb{R}^2$, with $X_0$ equipped with its constant scalar curvature metric and flat  $\mathbb{R}^2$.
\end{proposition}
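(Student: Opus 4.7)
The plan is to reduce to the model metric $\Omega_\varepsilon=\omega_\varepsilon+\lambda\varepsilon^{-\delta}\pi^*\omega_{FS}$ and exploit that, near any fixed point, it is asymptotically a Riemannian product of $(X_0,\omega_{\mathrm{cscK}})$ and flat $\mathbb{R}^2$. By Remark \ref{rem:closesolns} the extremal metric $\widetilde{\Omega}_\varepsilon$ differs from a sufficiently good $\Omega_{\varepsilon,\kappa}$, hence from $\Omega_\varepsilon$, by terms that decay in $\varepsilon$ in $C^{k,\alpha}$ on all of $\scX$. Since $C^2$-small perturbations preserve pointed Gromov--Hausdorff limits, I would first reduce to analysing $\Omega_\varepsilon$ itself.

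Fix $p\in\scX$ and set $t_0=\pi(p)$. When $t_0\ne 0$, for all $\varepsilon<|t_0|/2$ the cutoff $\chi(|t|/\varepsilon)$ vanishes on a fixed base neighbourhood of $t_0$. In the trivialisation $\scX|_{\mathbb{C}^*}\cong X\times\mathbb{C}^*$ one then has $\omega_\varepsilon=\omega_0+\ddb\phi_\varepsilon$, which is pulled back from the $X$-factor and hence purely vertical with no $t$-dependence. Thus $\Omega_\varepsilon$ is, on this neighbourhood, the \emph{exact} Riemannian product $\alpha_\varepsilon\oplus\lambda\varepsilon^{-\delta}\pi^*\omega_{FS}$. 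The vertical factor $(X,\alpha_\varepsilon,J)$ is isometric via $f_\varepsilon$ to $(M,\omega,J_\varepsilon)$, a smoothly varying family in the Kuranishi slice which converges to $(X_0,\omega)$ as $\varepsilon\to 0$; this is where the jumping of complex structure manifests. The horizontal factor, at any fixed radius in the rescaled metric $\lambda\varepsilon^{-\delta}\omega_{FS}$ around the smooth point $t_0$, converges in pointed Gromov--Hausdorff sense to flat $\mathbb{R}^2$ by the standard local rescaling argument for Riemannian surfaces. The case $t_0=\infty$ is handled identically using the chart at infinity.

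For $t_0=0$, inside $\{|t|<\varepsilon/2\}$ the cutoff is $1$, so $\omega_\varepsilon$ coincides with the fixed Kuranishi K\"ahler form $\omega$. In local coordinates $(z,t)$ at $p$, after rescaling $t$ by $\sqrt{\lambda\varepsilon^{-\delta}}$ to normalise $\lambda\varepsilon^{-\delta}\omega_{FS}$ to its flat model, the mixed components of $\omega$ pick up a factor of $\varepsilon^{\delta/2}$ and the purely horizontal components a factor of $\varepsilon^{\delta}$, both vanishing in the limit, while the vertical restriction at $t=0$ is already the cscK metric on $X_0$. The same product limit $X_0\times\mathbb{R}^2$ emerges. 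The most delicate step in the argument is precisely this suppression of cross-terms and of the horizontal contributions of $\omega_\varepsilon$: it relies on the $O(\varepsilon^{-1})$ bound for the horizontal part of $\omega_\varepsilon$ established in Section \ref{sec:construction}, combined with $\delta\geq 2$, so that the ratio to the blown-up base metric is $O(\varepsilon^{\delta-1})\to 0$ and these terms leave no trace in the pointed Gromov--Hausdorff limit.
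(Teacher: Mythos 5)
Your proposal is correct and follows essentially the same route as the paper: reduce to the model metric $\Omega_{\varepsilon}$ via Remark \ref{rem:closesolns}, note that the vertical part converges (through the Kuranishi family) to the cscK metric on $X_0$, and that the blown-up Fubini--Study factor converges to flat $\mathbb{R}^2$, with the horizontal/mixed contributions of $\omega_{\varepsilon}$ suppressed since $\delta \geq 2$. Your version is in fact more detailed than the paper's (the case analysis in the base point and the explicit rescaling of $t$), and the only cosmetic caveat is that for $\delta=2$ a fixed-radius ball centred over $t_0=0$ can spill out of $\{|t|<\varepsilon/2\}$ into the gluing annulus, where one should add that $\alpha_t$ and $\alpha_{\varepsilon}$ there also converge to the cscK metric, as in your $t_0\neq 0$ case.
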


\begin{remark}
This shows that a phenomenon of jumping of complex structures in metric limits happens in the strictly K-semistable fibres $X_t$ as $t\neq 0$. Indeed, instead of converging to some K\"ahler metric on the product  $X_t \times \mathbb{R}^2$, as $\varepsilon$ goes to $0$, these extremal metrics $\widetilde{\Omega}_{\varepsilon}$ are resembling in all the vertical directions more and more  the constant scalar curvature K\"ahler metric at the central fibre of the test configuration.
\end{remark}

\begin{proof} The result follows by observing that this limit behavior happens for the background model metric $\Omega_{\varepsilon}$. At any fixed fibre as $\varepsilon\rightarrow 0$, the vertical part of  $\Omega_{\varepsilon}$, namely $\omega_\varepsilon$, gets metrically  closer and closer to the cscK manifold $(X_0, \omega_0)$. Indeed, the vertical background metrics on the fibres for $ | t |\geq \varepsilon$ get smoothly closer to the cscK metric, thanks to their construction as smooth equivariant deformations in (\cite{szekelyhidi10}).  Moreover, the horizontal part gets more and more dominated by a large multiple of the Fubini-Study metric. Then the analytic estimates in the perturbative analysis, see Remark \ref{rem:closesolns}, show that the two metrics tensors corresponding to  $\Omega_\varepsilon$ and $\widetilde{\Omega}_{\varepsilon}$ are point-wise getting closer as $\varepsilon\rightarrow 0$, so the difference of the distances measured with the two metrics goes to zero. Hence, passing to the limit, both the model metrics $\Omega_{\varepsilon}$ and the extremal metrics $\widetilde{\Omega}_{\varepsilon}$ converge in the pointed Gromov-Hausdorff sense to the same space $X_0 \times \mathbb{R}^2$, with $X_0$ equipped with its constant scalar curvature metric and flat  $\mathbb{R}^2$, as claimed.
\end{proof}

Finally, note that if instead we would have rescaled the metrics so that the horizontal direction remains of fixed diameter (while the fibres shrink to zero) the metrics would converge to the Fubini-Study metric on the base $\mathbb{P}^1$.

\section{Examples}
\label{sec:examples}

In this final section we show that our construction can be used to produce many new extremal metrics. In order to apply the construction, we need to find a destabilising test configuration to a smooth cscK central fibre, for a strictly semistable manifold. Moreover, we need the $\mathbb{C}^*$ discrepancy condition to hold. 

All the examples we give here will come from explicit Fano threefolds. We consider some different families of the Mori--Mukai classification of Fano threefolds in turn. This relies heavily on recent work on Fano threefolds by many authors. We will use the book \cite{fanothreefolds} as our chief reference for the results relevant to our construction.

\subsection{Family 2.24} Here we will consider a special member of the family 2.24, following \cite[Corollary 4.7.7]{fanothreefolds}.
\begin{lemma}
\label{lem:2.24}
Let $X$ be the Fano threefold in the family 2.24 of the Mori--Mukai list given by equation
$$
(vw + u^2)x + v^2 y + w^2 z = 0,
$$
in $\mathbb{P}^2 \times \mathbb{P}^2$ with homogenous coordinates $([x,y,z],[u,v,w])$. Then there exists a test configuration $(\mathcal{X}, \mathcal{L})$ for $(X, - K_X)$ for which the construction of Theorem \ref{thm:main} applies.
\end{lemma}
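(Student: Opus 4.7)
The plan is to invoke the recent classification-theoretic results on K-stability of Fano threefolds compiled in \cite{fanothreefolds}, and then to check by hand that the construction of Theorem \ref{thm:main} applies. First, I would use \cite[Corollary 4.7.7]{fanothreefolds} to identify an explicit special degeneration of $(X,-K_X)$ whose central fibre $(\mathcal{X}_0,\mathcal{L}_0)$ is a smooth K-polystable Fano threefold. Since the central fibre is smooth and Fano, K-polystability is equivalent to the existence of a K\"ahler--Einstein metric (hence cscK in $c_1(-K_{\mathcal{X}_0})$) by the resolution of the Yau--Tian--Donaldson conjecture in the Fano case \cite{chendonaldsonsun15i,chendonaldsonsun15ii,chendonaldsonsun15iii}. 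This upgrades the algebraic K-semistability statement to the analytic K-semistability hypothesis required in Theorem \ref{thm:main}.

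Next, I would verify smoothness of the total space of the test configuration, so that we are in the setting where our construction applies. The degeneration produced by the cited corollary realises $X$ as a small deformation of $\mathcal{X}_0$ inside the Hilbert scheme of $\mathbb{P}^2\times\mathbb{P}^2$, compactified via the $\mathbb{C}^*$-action on the central fibre; smoothness of $\mathcal{X}$ follows because $\mathcal{X}_0$ is smooth and the deformation is unobstructed (the Kuranishi space agrees with a complex disk, as in the exposition of Section \ref{sec:kuranishi}). In particular, one gets a genuine smooth test configuration $\pi:(\mathcal{X},\mathcal{L})\to\mathbb{P}^1$ for $(X,-K_X)$.

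The main obstacle is then to check the $\mathbb{C}^*$ discrepancy condition of Definition \ref{def:automassumption}. To do this, I would first compute $\mathrm{Aut}(X,-K_X)$ directly from the bi-homogeneous equation $(vw+u^2)x+v^2y+w^2z=0$: the monomial structure of this defining polynomial, together with the classification of automorphisms in family 2.24 as reviewed in \cite{fanothreefolds}, forces this group to be finite (or at most to contain a $\mathbb{C}^*$ that is accounted for by the degeneration). One then computes a maximal torus $T_0\subset\mathrm{Aut}(\mathcal{X}_0,\mathcal{L}_0)$: the central fibre is a toric-like degeneration arising from the weighted $\mathbb{C}^*$-action on the coordinates $(u,v,w,x,y,z)$ that makes the defining polynomial $\mathbb{C}^*$-invariant, and from the explicit form of the equation one reads off that $T_0$ splits as $T\times\mathbb{C}^*$, where $T$ is the (lifted) reduced automorphism torus of $X$ and the extra $\mathbb{C}^*$ factor is precisely the generator of the test configuration. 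This is exactly the $\mathbb{C}^*$ discrepancy condition.

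With the destabilising test configuration in hand, the central fibre cscK by Chen--Donaldson--Sun, and the $\mathbb{C}^*$ discrepancy condition verified, all the hypotheses of Theorem \ref{thm:main} (equivalently, Theorem \ref{thm:mainepsilon2}) are met, producing extremal metrics on $\mathcal{X}$ in the classes $c_1(\mathcal{L}+\pi^*\mathcal{O}(k))$ for all $k\gg 0$. The expected hard part of the argument is the explicit automorphism computation on both sides: the computation of $\mathrm{Aut}(X,-K_X)$ requires care because small variations of the coefficients in the defining equation can drastically change the stabiliser, and the computation of $T_0$ on the central fibre depends on pinning down the concrete description of $\mathcal{X}_0$ from the cited corollary. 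Once these are verified, the conclusion follows immediately from Theorem \ref{thm:mainepsilon2}.
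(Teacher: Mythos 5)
Your overall strategy is the same as the paper's: take the explicit degeneration of family 2.24 from \cite[Corollary 4.7.7]{fanothreefolds} (concretely, $X_s = \{(svw+u^2)x+v^2y+w^2z=0\}$, degenerating to $X_0=\{u^2x+v^2y+w^2z=0\}$), use K-polystability of the smooth central fibre to get a cscK (K\"ahler--Einstein) metric there, check the $\mathbb{C}^*$ discrepancy condition, and invoke Theorem \ref{thm:main}. However, your treatment of the discrepancy condition contains a genuine error. You assert that the defining equation ``forces'' $\Aut(X,-K_X)$ to be finite (with a vague parenthetical escape). In fact, for this particular member the automorphism group is $\mathbb{C}^*$, and the central fibre $X_0$ has automorphism group $(\mathbb{C}^*)^2$ (\cite[Lemma 4.7.5]{fanothreefolds}, a corollary of \cite[Lemma 10.2]{cps19}); the discrepancy condition holds precisely because $T\cong\mathbb{C}^*$ lifts and $T_0\cong T\times\mathbb{C}^*$ with the extra factor generating the degeneration. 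If $\Aut(X)$ were finite, as you claim, the condition would \emph{fail}, since then one would need $T_0\cong\mathbb{C}^*$, not $(\mathbb{C}^*)^2$ --- this is exactly the situation of the other strictly semistable member $Y$ of family 2.24 discussed in the paper's remark, which is excluded for this reason. Your parenthetical ``a $\mathbb{C}^*$ that is accounted for by the degeneration'' also misreads the structure: the $\mathbb{C}^*$ inside $\Aut(X)$ is not the test-configuration $\mathbb{C}^*$ (the degeneration is not a product, as $X$ is strictly semistable); it is the stabiliser torus $T$, and the test-configuration $\mathbb{C}^*$ is the complementary factor of $T_0$.

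A second, smaller gap: Theorem \ref{thm:main} is set up for test configurations arising from the Kuranishi family of the cscK central fibre, so one must pass from the explicit algebraic degeneration to such a test configuration. The paper does this via versality of the Kuranishi family together with uniqueness of cscK degenerations (\cite[Theorem 1.3]{chensun14}) to guarantee that the central fibre of the resulting test configuration is still $(X_0,-K_{X_0})$. Your replacement of this step by a Hilbert-scheme/unobstructedness argument does not address the identification of the central fibre, which is the point that actually needs justification.
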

\begin{proof}
Consider the family $X_s$ with $s \in \mathbb{C}$ given by 
$$
(s vw + u^2)x + v^2 y + w^2 z = 0.
$$
By rescaling the variables $u,v,w,x,y,z$ one sees that $X_s$ is isomorphic to $X=X_1$ when $s \neq 0$. On the other hand, when $s=0$, we obtain the Fano manifold $X_0$ given by
$$
 u^2 x + v^2 y + w^2 z = 0.
$$ 
By \cite[Lemma 4.7.6]{fanothreefolds}, $X_0$ is $K$-polystable and so the family $X_s$ is a destabilising test configuration, showing that $X$ is strictly $K$-semistable. Moreover, by \cite[Lemma 4.7.5]{fanothreefolds} (which is a corollary of \cite[Lemma 10.2]{cps19}), $X_0$ has automorphism group $(\mathbb{C}^*)^2$, while $X$ has automorphism group $\mathbb{C}^*$. Thus the $\mathbb{C}^*$ discrepancy condition is satisfied.

Versality in the Kuranishi theory then implies that there exists a test configuration $(\mathcal{X}, \mathcal{L})$ for $(X, -K_X)$ in the Kuranishi family, whose central fibre is $(X_0, -K_{X_0})$. The fact that central fibre of this potentially different test configuration to the one we have explicitly produced is $(X_0, - K_{X_0})$ follows by uniqueness of cscK degenerations (\cite[Theorem 1.3]{chensun14}). Since the $\mathbb{C}^*$ discrepancy condition is satisfied, Theorem \ref{thm:main} applies to construct extremal metrics on $(\mathcal{X}, \mathcal{L}+ \mathcal{O}(d))$ for all sufficiently large $d$.
\end{proof}
\begin{remark}
In \cite[Corollary 4.7.7]{fanothreefolds}, it is also shown that a different member $Y$ of the family 2.24 is strictly K-semistable, with central fibre the same polystable $X_0$ as above. However, this $Y$ has discrete automorphism group and so does \emph{not} satisfy the $\mathbb{C}^*$ discrepancy condition. This family could potentially be a non-trivial example to investigate in the setting developed by Ortu (\cite{ortu21}), see Remark \ref{rem:outlook}.
\end{remark}

\subsection{Family 3.10} We can also do the construction in families, applying it to a one-parameter family of strictly K-semistable manifolds in the family 3.10. Let $X_c$ for $c \neq \pm 1$ be the Fano threefold in the family 3.10 of the Mori--Mukai list given as follows. Endow $\mathbb{P}^4$ with homogeneous coordinates $[v,w,x,y,z]$. Let $C_1$ and $C_2$ be the curves in $\mathbb{P}^4$ given by
\begin{align*}
C_1 =& \{ 0 = w^2 + z v = y = x  \}, \\
C_2 =& \{ 0 = w^2 + xy = v = z  \}.
\end{align*} 
These curves are disjoint and are both contained in the quadric surfaces
$$
Q_c = \{ w^2 + xy + zv + c( xv + yz ) + xz = 0 \} 
$$
for any $c$. Let $X_c = \Bl_{C_1, C_2} Q_c$ be the blowup of $Q_c$ in the two curves $C_1$ and $C_2$. For all $c$, the reduced automorphism group of $X_c$ is trivial (\cite[Lemma 5.9]{cps19}).
\begin{proposition}
\label{prop:3.10}
For all $c \neq 0, \pm 1$, there exists a test configuration $(\mathcal{X}_c, \mathcal{L}_c)$ for the member $(X_c, - K_{X_c})$ of the family 3.10 for which the construction of Theorem \ref{thm:main} applies.
\end{proposition}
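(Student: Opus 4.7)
The plan parallels that of Lemma \ref{lem:2.24}. The candidate K-polystable central fibre is $X_0 := \Bl_{C_1, C_2} Q_0$, i.e.\ the member of family 3.10 obtained by setting $c = 0$ in the defining equation of $Q_c$. By the K-stability analysis of family 3.10 in \cite{fanothreefolds}, $X_0$ is K-polystable and hence, by \cite{chendonaldsonsun15i,chendonaldsonsun15ii,chendonaldsonsun15iii}, admits a K\"ahler--Einstein metric; moreover, for $c \neq 0, \pm 1$ the threefold $X_c$ is strictly K-semistable with $X_0$ as its K-polystable degeneration.

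To verify the $\mathbb{C}^*$-discrepancy condition of Definition \ref{def:automassumption}, I would exhibit an explicit $\mathbb{C}^*$-action on $X_0$: the action on $\mathbb{P}^4$ given by
$$
[v, w, x, y, z] \mapsto [tv,\, w,\, tx,\, t^{-1} y,\, t^{-1} z]
$$
preserves the quadric $Q_0$ as well as each of the two curves $C_1, C_2$, as a direct weight check shows, and hence lifts to a nontrivial $\mathbb{C}^*$-action on $X_0$. Combined with the triviality of the reduced automorphism group of $X_c$ for $c \neq 0, \pm 1$ from \cite[Lemma 5.9]{cps19}, this yields the desired $\mathbb{C}^*$-discrepancy, \emph{provided} the $\mathbb{C}^*$ generating the test configuration is identified with the action just described.

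The final step is to produce the test configuration. Following the template of Lemma \ref{lem:2.24}, I would invoke Kuranishi versality at $X_0$: the point of the Kuranishi slice $V$ corresponding to $X_c$ lies on a $\mathbb{C}^*$-orbit (for the action exhibited above) whose closure contains $X_0$, producing a test configuration $(\mathcal{X}_c, \mathcal{L}_c) \to \mathbb{P}^1$ for $(X_c, -K_{X_c})$ with central fibre $(X_0, -K_{X_0})$. Uniqueness of cscK degenerations \cite{chensun14} guarantees that the central fibre is indeed this $X_0$, and by construction the $\mathbb{C}^*$ generating the test configuration coincides with the one identified above. Theorem \ref{thm:main} then produces extremal metrics on $(\mathcal{X}_c, \mathcal{L}_c + \pi^* \mathcal{O}(d))$ for all $d$ sufficiently large.

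The main obstacle is supplying K-polystability of $X_0$ together with the algebraic degeneration $X_c \rightsquigarrow X_0$ for each $c \neq 0, \pm 1$, in particular for $c$ not necessarily small; both are handled by the K-stability analysis of Fano threefolds collected in \cite{fanothreefolds} and \cite{cps19}.
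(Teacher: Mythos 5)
Your choice of central fibre is not the right one, and this is a genuine gap rather than a cosmetic difference. The degeneration that the cited K-stability analysis actually provides (\cite[Corollary 5.17.7, Lemma 5.17.6]{fanothreefolds}) scales away the $xz$ term, not the parameter $c$: one sets $Q_{c,s} = \{ w^2 + xy + zv + c(xv+yz) + s\,xz = 0\}$ and lets $s \to 0$, so the K-polystable degeneration of $X_c$ is $Y_c = \Bl_{C_1,C_2} Q_{c,0}$, a threefold that still depends on $c$ and is \emph{not} the $c=0$ member $X_0 = \Bl_{C_1,C_2}\{w^2+xy+zv+xz=0\}$ of the family as you parametrise it. The reference does not assert K-polystability of your $X_0$, nor that $X_c \rightsquigarrow X_0$; and since cscK/K-polystable degenerations are unique (\cite{chensun14}), $X_c$ cannot degenerate both to $Y_c$ and to a K-polystable $X_0$ unless they are isomorphic, which cannot hold for all $c$ simultaneously because the $Y_c$ are generically pairwise non-isomorphic (this is exactly why the central fibre in the statement must vary with $c$). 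Moreover, for the $\mathbb{C}^*$-discrepancy of Definition \ref{def:automassumption} you need the reduced automorphism group of the central fibre to be \emph{exactly} $\mathbb{C}^*$ (an upper bound), which the paper gets from \cite[Lemma 5.9]{cps19} applied to $Y_c$, $c \neq 0$; exhibiting one explicit $\mathbb{C}^*$ on your candidate fibre does not give this.

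The concrete mechanism you propose also fails. Your $\mathbb{C}^*$ with weights $(1,0,1,-1,-1)$ on $(v,w,x,y,z)$ does fix $w^2$, $xy$, $zv$, $xz$ and the curves $C_1, C_2$, but applied to $Q_c$ it produces $w^2+xy+zv+xz+ct^{2}xv+ct^{-2}yz=0$; clearing the pole (multiply by $t^{2}$) and letting $t \to 0$ gives the flat limit $\{yz=0\}$, a reducible quadric (and $\{xv=0\}$ as $t \to \infty$), not $Q_0$. Hence the closure of the orbit of the point corresponding to $X_c$ does not contain $X_0$, and no test configuration with central fibre $(X_0,-K_{X_0})$ arises from Kuranishi versality this way. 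The repair is to use the $s \to 0$ degeneration above, i.e.\ a $\mathbb{C}^*$ for which $xz$ has strictly positive weight while $w^2$, $xy$, $zv$, $xv+yz$ have weight zero (for instance weights $(-1,0,1,-1,1)$ on $(v,w,x,y,z)$, which also preserves $C_1$ and $C_2$), take the central fibre $Y_c$, and then quote \cite[Lemma 5.17.6]{fanothreefolds} for its K-polystability and \cite[Lemma 5.9]{cps19} for $\Aut(Y_c)=\mathbb{C}^*$ when $c \neq 0$, after which the conclusion follows as in Lemma \ref{lem:2.24}.
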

Note that the condition $c \neq \pm 1$ is made just to ensure that we have a smooth threefold. The case $c=0$ is excluded because the $\mathbb{C}^*$ discrepancy condition is not satisfied in this case.
\begin{proof}
This follows \cite[Corollary 5.17.7]{fanothreefolds}, which gives an explicit test configuration for $X_c$, given as follows. Let
$$
Q_{c,s} = \{ w^2 + xy + zv + c( xv + yz ) + s xz = 0 \} .
$$
All the $Q_{c,s}$ contain $C_1$ and $C_2$, and so we can define $X_{c,s} =  \Bl_{C_1, C_2} Q_{c,s}$. Then for all non-zero $s$, $X_{c,s}$ is isomorphic to $X_c$. On the other hand, when $s=0$, $Y_c = X_{c,0}$ is K-polystable (\cite[Lemma 5.17.6]{fanothreefolds}). This gives a destabilising test configuration for $(X_s, - K_{X_s}).$ Moreover, provided $c \neq 0$, by \cite[Lemma 5.9]{cps19}, the automorphism group of $Y_c$ is $\mathbb{C}^*$, showing the construction can be applied to all of these $X_c$. 
\end{proof}
For the remaining case $c=0$, the automorphism group of $Y_c$ is $(\mathbb{C}^*)^2$, and so the $\mathbb{C}^*$ discrepancy condition is not satisfied in this case.

\subsection{Family 4.13} Finally, we consider the family 4.13, where we again focus on one special member of the family.
\begin{lemma}
\label{lem:4.13}
Let $X$ be the Fano threefold in the family 4.13 of the Mori--Mukai list given as the blowup of $\mathbb{P}^1 \times \mathbb{P}^1 \times \mathbb{P}^1$, with homogenous coordinates $([x,y],[u,v],[p,q])$, in the curve given by the two equations
$$
xv - y u = x^3 p + y^3 q + x y^2 p = 0.
$$
Then there exists a test configuration $(\mathcal{X}, \mathcal{L})$ for $(X, - K_X)$ for which the construction of Theorem \ref{thm:main} applies.
\end{lemma}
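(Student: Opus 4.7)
The plan is to mirror the strategy used for Lemma \ref{lem:2.24} and Proposition \ref{prop:3.10}: exhibit an explicit destabilising one-parameter degeneration of $X$ to a K-polystable Fano threefold $X_0$ with strictly larger connected automorphism group, and then invoke Kuranishi versality together with uniqueness of cscK degenerations to obtain the required smooth test configuration and verify the $\mathbb{C}^*$ discrepancy condition.

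Concretely, I would first search for a flat family of curves $\{C_s\}_{s \in \mathbb{C}} \subset \mathbb{P}^1 \times \mathbb{P}^1 \times \mathbb{P}^1$ with $C_1$ the given curve
$$
\{ xv - yu = 0,\ x^3 p + y^3 q + xy^2 p = 0 \},
$$
and such that $C_s \cong C_1$ for all $s \neq 0$, while $C_0$ carries an additional $\mathbb{C}^*$-symmetry. The natural ansatz, following the pattern for families 2.24 and 3.10, is to rescale the ``asymmetric'' monomial, e.g.
$$
C_s = \{ xv - yu = 0,\ x^3 p + y^3 q + s \cdot xy^2 p = 0 \}.
$$
A weighted rescaling of the homogeneous coordinates $([x,y],[u,v],[p,q])$ should then identify $C_s$ with $C_1$ for $s \in \mathbb{C}^*$, while $C_0$ is visibly preserved by a diagonal one-parameter subgroup of $(\mathbb{C}^*)^3$. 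Taking $X_s = \Bl_{C_s}(\mathbb{P}^1 \times \mathbb{P}^1 \times \mathbb{P}^1)$ and compactifying over $\mathbb{P}^1$ thus yields a destabilising test configuration with smooth central fibre $X_0 = \Bl_{C_0}(\mathbb{P}^1 \times \mathbb{P}^1 \times \mathbb{P}^1)$.

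The next step is to extract from \cite{fanothreefolds} the K-polystability of $X_0$ and to compute the reduced automorphism groups of both $X$ and $X_0$. The expected output, by analogy with the previous two families, is that $X_0$ is K-polystable (and hence admits a cscK metric in $c_1(-K_{X_0})$), that $X$ is strictly K-semistable with trivial reduced automorphism group, and that $\mathrm{Aut}^0(X_0)$ is a single copy of $\mathbb{C}^*$, namely the one generating the degeneration above. This is precisely the $\mathbb{C}^*$ discrepancy condition of Definition \ref{def:automassumption}.

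Once these inputs are in place, the conclusion follows verbatim as in Lemma \ref{lem:2.24} and Proposition \ref{prop:3.10}: equivariant Kuranishi versality for $(X_0, -K_{X_0})$ with respect to a maximal compact torus produces a smooth test configuration $(\mathcal{X}, \mathcal{L}) \to \mathbb{P}^1$ with generic fibre $(X, -K_X)$; uniqueness of cscK degenerations \cite{chensun14} ensures that its central fibre is again $(X_0, -K_{X_0})$; and Theorem \ref{thm:main} then produces extremal metrics in $c_1(\mathcal{L} + \pi^* \mathcal{O}(k))$ for all $k \gg 0$. The main obstacle I anticipate is entirely algebro-geometric: correctly identifying the polystable degeneration of $X$ inside family 4.13 and verifying the automorphism computation from the literature on K-stability of Fano threefolds. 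The differential-geometric side is handled wholesale by Theorem \ref{thm:main} and presents no new difficulty.
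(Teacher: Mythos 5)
Your proposal matches the paper's proof: the paper also degenerates $X$ to the blowup of $\mathbb{P}^1\times\mathbb{P}^1\times\mathbb{P}^1$ in the curve $xv-yu=x^3p+y^3q=0$ (citing \cite[Corollary 5.22.3]{fanothreefolds} for the test configuration, \cite[Theorem 5.22.7]{fanothreefolds} for K-polystability of the central fibre, and the automorphism groups $\mathbb{C}^*$ versus trivial for the $\mathbb{C}^*$ discrepancy), and then concludes exactly as in Lemma \ref{lem:2.24} via Kuranishi versality and uniqueness of cscK degenerations. Your explicit one-parameter rescaling of the monomial $xy^2p$ is just a hands-on realisation of the same degeneration, so the argument is essentially identical.
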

\begin{proof}
The proof is very similar to that of Lemma \ref{lem:2.24}. By \cite[Corollary 5.22.3]{fanothreefolds}, $X$ admits a test configuration degenerating $X$ to the Fano threefold $X_0$ given as the blowup of $\mathbb{P}^1 \times \mathbb{P}^1 \times \mathbb{P}^1$ in the curve
$$
xv - y u = x^3 p + y^3 q = 0.
$$
This central fibre is K-polystable by \cite[Theorem 5.22.7]{fanothreefolds}, and has automorphism group $\mathbb{C}^*$, while $X$ has trivial automorphism group. It follows that the construction applies to a test configuration for $(X, -K_X)$.
\end{proof}

\subsection{Summary of examples}
We summarise all the special cases we have considered in the following theorem.
\begin{theorem}
\label{prop:examples}
The following families from the Mori--Mukai list of Fano threefolds produce at least one test configuration for a strictly K-semistable manifold to which the construction of Theorem \ref{thm:main} applies: 
\begin{itemize}
\item 2.24
\item 3.10
\item 4.13
\end{itemize}
\end{theorem}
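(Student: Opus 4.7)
The statement is an immediate consolidation of Lemma \ref{lem:2.24}, Proposition \ref{prop:3.10}, and Lemma \ref{lem:4.13}, each of which exhibits, in the corresponding Mori--Mukai family, a strictly K-semistable member together with a test configuration to which Theorem \ref{thm:main} applies. My plan is therefore simply to cite these three results; the real content sits in the three lemmas themselves, each of which is proved by a common three-step strategy that I outline here.

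First, for each family I would produce an explicit algebraic one-parameter degeneration from a smooth strictly K-semistable member $X$ to a smooth K-polystable Fano $X_0$. These are written down inside a fixed ambient space: a family of hypersurfaces in $\mathbb{P}^2 \times \mathbb{P}^2$ for family 2.24, a family of blowups of a moving quadric in $\mathbb{P}^4$ along two fixed disjoint curves for family 3.10, and a blowup of $(\mathbb{P}^1)^3$ along a deforming curve for family 4.13. The K-polystability of the central fibres in each case is taken from \cite{fanothreefolds}.

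Second, I would verify the $\mathbb{C}^*$ discrepancy condition of Definition \ref{def:automassumption} by comparing $\Aut(X, -K_X)$ with $\Aut(X_0, -K_{X_0})$, using the computations of \cite{cps19} and \cite{fanothreefolds}. In each of the three cases these computations show that the automorphism group of the central fibre is larger than that of the general member by exactly a $\mathbb{C}^*$ factor (for 2.24, $(\mathbb{C}^*)^2$ versus $\mathbb{C}^*$; for the relevant parameter range of 3.10, $\mathbb{C}^*$ versus trivial; for 4.13, $\mathbb{C}^*$ versus trivial).

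Third, I would promote the explicit algebraic degeneration to a test configuration arising in the Kuranishi family of $X_0$. Since $X_0$ is smooth and K-polystable it carries a cscK metric in $c_1(-K_{X_0})$; then versality of the Kuranishi construction, together with the uniqueness of cscK degenerations from \cite{chensun14}, identifies the explicit family (near the central fibre) with a test configuration of the type produced in Section \ref{sec:kuranishi}. This is exactly the algebro-geometric input needed by Theorem \ref{thm:main}, which then yields the extremal metric on $\mathcal{X}$ in $c_1(\mathcal{L} + \pi^* \mathcal{O}(k))$ for $k \gg 0$. The main obstacle is not analytic but bibliographic: one must extract from the literature K-polystability statements and automorphism computations in precisely the form needed. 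Once the three ingredients above are cited for a single explicit member of each family, the theorem follows with no further argument.
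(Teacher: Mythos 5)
Your proposal is correct and matches the paper's own treatment: Theorem \ref{prop:examples} is there simply a consolidation of Lemma \ref{lem:2.24}, Proposition \ref{prop:3.10} and Lemma \ref{lem:4.13}, and those results are proved by exactly the three-step scheme you describe (explicit degeneration with K-polystable central fibre from \cite{fanothreefolds}, verification of the $\mathbb{C}^*$ discrepancy via the automorphism computations of \cite{cps19} and \cite{fanothreefolds}, and identification with a Kuranishi-family test configuration using versality and uniqueness of cscK degenerations \cite{chensun14} before invoking Theorem \ref{thm:main}). No gaps; nothing further is needed.
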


\begin{remark}
The above result says that in each of the cases mentioned, we have found at least \emph{one} member, to which we can apply our construction. In some cases, we have checked families rather than a single member, but we have not checked every possible member of the family in order to get a complete classification. The precise members of the given families that we have checked are given in the relevant results prior to the theorem. We certainly think there are other members of these and other families from which we can obtain further examples. 
\end{remark}

\begin{remark} Not all the families will give test configurations to which we can apply the construction, even if there are strictly K-semistable members of these families. For example, there are some strictly K-semistable members of the family 2.26, but they degenerate to a \emph{singular} K-stable Fano variety. There are no K-polystable members of this family (\cite[Section 5.10]{fanothreefolds}), and so our construction cannot be applied to any members of this family.
\end{remark}

\bibliography{degenerations}

\providecommand{\bysame}{\leavevmode\hbox to3em{\hrulefill}\thinspace}
\providecommand{\MR}{\relax\ifhmode\unskip\space\fi MR }
\providecommand{\MRhref}[2]{%
  \href{http://www.ams.org/mathscinet-getitem?mr=#1}{#2}
}
\providecommand{\href}[2]{#2}
\begin{thebibliography}{10}

\bibitem{acgtf04}
Vestislav Apostolov, David M.~J. Calderbank, Paul Gauduchon, and Christina~W.
  T{\o}nnesen-Friedman, \emph{Hamiltonian 2-forms in {K}\"{a}hler geometry.
  {II}. {G}lobal classification}, J. Differential Geom. \textbf{68} (2004),
  no.~2, 277--345. \MR{2144249}

\bibitem{fanothreefolds}
Carolina Araujo, Ana-Maria Castravet, Ivan Cheltsov, Kento Fujita, Anne-Sophie
  Kaloghiros, Jesus Martinez-Garcia, Constantin Shramov, Hendrik Süss, and
  Nivedita Viswanathan, \emph{{The Calabi Problem for Fano threefolds}}, MPIM
  Preprint Series \textbf{31} (2021).

\bibitem{arezzopacard06}
Claudio Arezzo and Frank Pacard, \emph{Blowing up and desingularizing constant
  scalar curvature {K}\"ahler manifolds}, Acta Math. \textbf{196} (2006),
  no.~2, 179--228. \MR{2275832 (2007i:32018)}

\bibitem{arezzopacard09}
\bysame, \emph{Blowing up {K}\"ahler manifolds with constant scalar curvature.
  {II}}, Ann. of Math. (2) \textbf{170} (2009), no.~2, 685--738. \MR{2552105
  (2010m:32025)}

\bibitem{arezzopacardsinger11}
Claudio Arezzo, Frank Pacard, and Michael Singer, \emph{Extremal metrics on
  blowups}, Duke Math. J. \textbf{157} (2011), no.~1, 1--51. \MR{2783927}

\bibitem{bronnle15}
Till Br\"{o}nnle, \emph{Extremal {K}\"{a}hler metrics on projectivized vector
  bundles}, Duke Math. J. \textbf{164} (2015), no.~2, 195--233. \MR{3306554}

\bibitem{calabi82}
Eugenio Calabi, \emph{Extremal {K}\"ahler metrics}, Seminar on {D}ifferential
  {G}eometry, Ann. of Math. Stud., vol. 102, Princeton Univ. Press, Princeton,
  N.J., 1982, pp.~259--290. \MR{645743 (83i:53088)}

\bibitem{cannasdasilva01}
Ana Cannas~da Silva, \emph{Lectures on symplectic geometry}, Lecture Notes in
  Mathematics, vol. 1764, Springer-Verlag, Berlin, 2001. \MR{1853077}

\bibitem{cps19}
I.~A. Cheltsov, V.~V. Przyjalkowski, and C.~A. Shramov, \emph{Fano threefolds
  with infinite automorphism groups}, Izv. Ross. Akad. Nauk Ser. Mat.
  \textbf{83} (2019), no.~4, 226--280. \MR{3985696}

\bibitem{chencheng18}
Xiuxiong Chen and Jingrui Cheng, \emph{{On the constant scalar curvature
  K\"ahler metrics, general automorphism group}},  (2018), {arXiv:1801.05907}.

\bibitem{chendonaldsonsun15i}
Xiuxiong Chen, Simon Donaldson, and Song Sun, \emph{K\"ahler-{E}instein metrics
  on {F}ano manifolds. {I}: {A}pproximation of metrics with cone
  singularities}, J. Amer. Math. Soc. \textbf{28} (2015), no.~1, 183--197.
  \MR{3264766}

\bibitem{chendonaldsonsun15ii}
\bysame, \emph{K\"ahler-{E}instein metrics on {F}ano manifolds. {II}: {L}imits
  with cone angle less than {$2\pi$}}, J. Amer. Math. Soc. \textbf{28} (2015),
  no.~1, 199--234. \MR{3264767}

\bibitem{chendonaldsonsun15iii}
\bysame, \emph{K\"ahler-{E}instein metrics on {F}ano manifolds. {III}: {L}imits
  as cone angle approaches {$2\pi$} and completion of the main proof}, J. Amer.
  Math. Soc. \textbf{28} (2015), no.~1, 235--278. \MR{3264768}

\bibitem{chensun14}
Xiuxiong Chen and Song Sun, \emph{Calabi flow, geodesic rays, and uniqueness of
  constant scalar curvature {K}\"{a}hler metrics}, Ann. of Math. (2)
  \textbf{180} (2014), no.~2, 407--454. \MR{3224716}

\bibitem{dervansektnan20}
Ruadha\'{\i} Dervan and Lars~Martin Sektnan, \emph{Extremal metrics of
  fibrations}, Proc. Lond. Math. Soc. (3) \textbf{120} (2020), no.~4, 587--616.
  \MR{4008378}

\bibitem{dervansektnan19b}
\bysame, \emph{Moduli theory, stability of fibrations and optimal symplectic
  connections}, Geom. Topol. \textbf{25} (2021), no.~5, 2643--2697.
  \MR{4310897}

\bibitem{dervansektnan19a}
\bysame, \emph{Optimal symplectic connections on holomorphic submersions},
  Comm. Pure Appl. Math. \textbf{74} (2021), no.~10, 2132--2184. \MR{4303016}

\bibitem{dervansektnan21}
\bysame, \emph{Uniqueness of optimal symplectic connections}, Forum Math. Sigma
  \textbf{9} (2021), Paper No. e18, 37. \MR{4228270}

\bibitem{donaldson02}
Simon~K. Donaldson, \emph{Scalar curvature and stability of toric varieties},
  J. Differential Geom. \textbf{62} (2002), no.~2, 289--349. \MR{1988506}

\bibitem{donaldson05}
\bysame, \emph{Lower bounds on the {C}alabi functional}, J. Differential Geom.
  \textbf{70} (2005), no.~3, 453--472. \MR{2192937}

\bibitem{donaldson09}
\bysame, \emph{Constant scalar curvature metrics on toric surfaces}, Geom.
  Funct. Anal. \textbf{19} (2009), no.~1, 83--136. \MR{2507220}

\bibitem{fine04}
Joel Fine, \emph{Constant scalar curvature {K}\"{a}hler metrics on fibred
  complex surfaces}, J. Differential Geom. \textbf{68} (2004), no.~3, 397--432.
  \MR{2144537}

\bibitem{futaki83}
A.~Futaki, \emph{An obstruction to the existence of {E}instein {K}\"{a}hler
  metrics}, Invent. Math. \textbf{73} (1983), no.~3, 437--443. \MR{718940}

\bibitem{hong98}
Ying-Ji Hong, \emph{Ruled manifolds with constant {H}ermitian scalar
  curvature}, Math. Res. Lett. \textbf{5} (1998), no.~5, 657--673. \MR{1666868}

\bibitem{hwangsinger02}
Andrew~D. Hwang and Michael~A. Singer, \emph{A momentum construction for
  circle-invariant {K}\"{a}hler metrics}, Trans. Amer. Math. Soc. \textbf{354}
  (2002), no.~6, 2285--2325. \MR{1885653}

\bibitem{kuranishi65}
M.~Kuranishi, \emph{New proof for the existence of locally complete families of
  complex structures}, Proc. {C}onf. {C}omplex {A}nalysis ({M}inneapolis,
  1964), Springer, Berlin, 1965, pp.~142--154. \MR{0176496}

\bibitem{lebrunsimanca94}
C.~LeBrun and S.~R. Simanca, \emph{Extremal {K}\"ahler metrics and complex
  deformation theory}, Geom. Funct. Anal. \textbf{4} (1994), no.~3, 298--336.
  \MR{1274118}

\bibitem{ortu21}
Annamaria Ortu, \emph{{Deformations of holomorphic submersions and optimal
  symplectic connections}},  (2021), {preliminary draft of forthcoming paper}.

\bibitem{Szethesis}
G{\'a}bor Sz{\'e}kelyhidi, \emph{{E}xtremal {K\"ahler} {M}etrics and
  {K}-stability}, Ph.D. thesis, Imperial {C}ollege {L}ondon, 2006.

\bibitem{szekelyhidi10}
\bysame, \emph{The {K}\"{a}hler-{R}icci flow and {$K$}-polystability}, Amer. J.
  Math. \textbf{132} (2010), no.~4, 1077--1090. \MR{2663648}

\bibitem{szekelyhidi12}
\bysame, \emph{On blowing up extremal {K}\"ahler manifolds}, Duke Math. J.
  \textbf{161} (2012), no.~8, 1411--1453. \MR{2931272}

\bibitem{szekelyhidi14book}
\bysame, \emph{An introduction to extremal {K\"a}hler metrics}, Graduate
  Studies in Mathematics, vol. 152, American Mathematical Society, Providence,
  RI, 2014. \MR{3186384}

\bibitem{tian97}
Gang Tian, \emph{K\"ahler-{E}instein metrics with positive scalar curvature},
  Invent. Math. \textbf{130} (1997), no.~1, 1--37. \MR{1471884}

\bibitem{yau93}
Shing-Tung Yau, \emph{Open problems in geometry}, Differential geometry:
  partial differential equations on manifolds ({L}os {A}ngeles, {CA}, 1990),
  Proc. Sympos. Pure Math., vol.~54, Amer. Math. Soc., Providence, RI, 1993,
  pp.~1--28. \MR{1216573}

\end{thebibliography}
\bibliographystyle{amsplain}

\end{document}